\newcommand{\al}{\alpha}
\newcommand{\si}{\sigma}
\newcommand{\id}{\mathrm{id}}
\newcommand{\ot}{\otimes}
\newcommand{\trl}{\triangleleft}
\newcommand{\trr}{\triangleright}
\def\ppr{\rightharpoonup}
\def\ppl{\leftharpoonup}
\newcommand{\li}{{}_{1}}
\newcommand{\lii}{{}_{2}}
\newcommand{\lmo}{{}_{(0)}} 
\newcommand{\loo}{{}_{(0)}}
\newcommand{\loi}{{}_{(-1)}}
\newcommand{\lmoo}{{}_{(0)}}
\newcommand{\lmi}{{}_{(1)}}
\newcommand{\lmoi}{{}_{(-1)}}
\newcommand{\mo}{{}_{(0)}}
\newcommand{\mi}{{}_{(1)}}
\newcommand{\moi}{{}_{(-1)}}
\newcommand{\boo}{{}_{[0]}}
\newcommand{\bi}{{}_{[1]}}
\newcommand{\boi}{{}_{[-1]}}
\newcommand{\ppi}{{}_{<1>}}
\newcommand{\pii}{{}_{<2>}}
\newcommand{\qi}{{}_{\{1\}}}
\newcommand{\qii}{{}_{\{2\}}}
\newcommand{\alaa}{\alpha_{A}}
\newcommand{\beaa}{\beta_{A}}
\newcommand{\alvv}{\alpha_{V}}
\newcommand{\bevv}{\beta_{V}}
\newcommand{\alhh}{\alpha_{H}}
\newcommand{\behh}{\beta_{H}}
\def\rbiprod{{\cdot\kern-.33em\triangleright\!\!\!<}}
\def\lbiprod{{>\!\!\!\triangleleft\kern-.33em\cdot\, }}
\def\lrbiprod{{\ \cdot\kern-.60em\triangleright\kern-.33em\triangleleft\kern-.33em\cdot\, }}
\def\lprod{{>\!\!\!\triangleleft\kern-.33em\ \, }}
\newcommand{\lrcoprod}{{\,\blacktriangleright\!\!\blacktriangleleft\, }}
\newtheorem{theorem}{Theorem}[section]
\newtheorem{lemma}[theorem]{Lemma}
\newtheorem{proposition}[theorem]{Proposition}
\theoremstyle{definition}
\newtheorem{definition}[theorem]{Definition}
\title{Extending structures for BiHom-Frobenius algebras}
\author{Tao Zhang, Hui-Jun Yao}
\date{}
\begin{document}
 \maketitle

 \setcounter{section}{0}

\begin{abstract}
We introduce the concept of braided BiHom-Frobenius algebras and give the  cocycle bicrossproduct construction for BiHom-Frobenius algebras.
We find that the extending problem for BiHom-Frobenius algebras can be classified by non-abelian cohomology theory.
\par\smallskip
{\bf 2020 MSC:} 16D99, 16T99, 17A99, 17D30

\par\smallskip
{\bf Keywords:}
Extending structure,  cocycle bicrossproduct, braided BiHom-Frobenius algebra, non-abelian cohomology.
\end{abstract}

\tableofcontents

\section{Introduction}

We define concept of BiHom-Frobenius algebras as a BiHom-analogue of Frobenius algebras.
Roughly speaking, it is a vector space $A$ equipped simultaneously with a BiHom-associative algebra  $(A, \cdot,\al,\beta)$  and a BiHom-coassociative  coalgebra $(A, \Delta, \al,\beta)$  such that the following compatibility condition is satisfied,
\begin{equation}
\Delta(a b)=\alpha(a) b_{1} \otimes \beta\left(b_{2}\right)=\alpha\left(a_{1}\right) \otimes a_{2} \beta(b).
\end{equation}
Furthermore, the construction of the Drinfeld double for BiHom-Frobenius algebras was given  in \cite{HHS}.
The relationship among infinitesimal BiHom-bialgebras,  BiHom-pre-Lie algebra and the associative BiHom-Yang-Baxter equation was studied in \cite{Liu19,Liu20}.

On the other direction, the theory of extending structures for many types of algebras such as Lie algebra, associative algebra, Leibniz algebra,  Poisson algebra were well developed by A. L. Agore and G. Militaru in \cite{AM1,AM2,AM3,AM4,AM5,AM6}.
Recently, extending structures for 3-Lie algebras, Lie bialgebras,  infinitesimal bialgebras and Lie conformal superalgebras were studied  in \cite{Z2,Z3,Z4,ZCY}.
Until now, extending structures for Hom-algebra and Hom-bialgebra structures were not developed as well as the above-mentioned algebras.

In this paper, we study  extending structures  for BiHom-Frobenius algebra.
Let $A$ be a BiHom-Frobenius algebra and $E$ a BiHom-vector space containing $A$ as a subspace.
The extending problem is to describe and classify all BiHom-Frobenius algebra structures on $E$
such that $A$ is a sub(co)algebra or quotient (co)algebra of $E$.
To solve this problem, we introduce the concept of braided BiHom-Frobenius algebras.
This new concept can be seen as a BiHom-analogue of braided Hopf algebra in quantum group theory  \cite{Ra85,Ma95,BD99,BD01}
and braided Lie bialgebra in \cite{Ma00,So96}.
Using this new concept, we give a construction of the unified product theory to  solve the extending problem for BiHom-Frobenius algebras.
It is proved that this problem can be classified by using non-abelian cohomology theory.

This paper is organized as follows. In Section 2, we recall some definitions and fix some notations.
In Section 3, we introduce the concept of BiHom-Frobenius-Hopf bimodule and braided BiHom-Frobenius algebras.
Then we construct ordinary BiHom-Frobenius algebras from braided BiHom-Frobenius algebras via biproduct.
In  Section 4, we define the notion of matched pairs of  braided BiHom-Frobenius algebras and construct cocycle bicrossproduct BiHom-Frobenius algebras through two generalized braided  BiHom-Frobenius algebras.
In Section 5, we study the extending problems for BiHom-Frobenius algebras and prove that this problem can be classified by some non-abelian cohomology theory.

Throughout this paper, all vector spaces will be over a fixed field of character zero. The identity map of a vector space $V$ is denoted by $\id_V: V\to V$ or simply $\id: V\to V$.  A BiHom-vector space $(V,\alvv,\bevv)$ is a vector space together with two linear maps $\alvv,\bevv: V\to V$.

\section{Preliminaries}

\begin{definition}[\cite{GMMP}]
A BiHom-associative algebra is a 4-tuple $(A, \cdot, \alpha, \beta)$, where $A$ is a linear space and $\alpha, \beta: A \rightarrow A$ and $\mu: A \otimes A \rightarrow A$ are linear maps such that
\begin{equation}\label{def:ass-01}
\alpha \circ \beta=\beta \circ \alpha,\quad\alpha(x \cdot y)=\alpha(x) \cdot \alpha(y),\quad \beta(x \cdot y)=\beta(x) \cdot \beta(y)
\end{equation}
 and
\begin{equation}\label{def:ass-02}
\alpha(x) \cdot(y \cdot z)=(x \cdot y) \cdot \beta(z)
\end{equation}
for all $x, y, z \in A$ where $x \cdot y=\mu(x\ot y)$. The condition  \eqref{def:ass-02}  is called the BiHom-associativity condition and the maps $\alpha$ and $\beta$ are called the structure maps of $A$. In the follow text, we will write $x \cdot y$ as $x y$ and call a BiHom-associative algebra by BiHom-algebra for simplicity.
\end{definition}

A morphism $f:\left(A, \cdot_{A}, \alpha_{A}, \beta_{A}\right) \rightarrow\left(B, \cdot_{B}, \alpha_{B}, \beta_{B}\right)$ of BiHom-associative algebras is a linear map $f: A \rightarrow B$ such that $\alpha_{B} \circ f=f \circ \alpha_{A}, \beta_{B} \circ f=f \circ \beta_{A}$ and $f (x\cdot_{A} y)=f(x) \cdot_B f(y)$.


\begin{definition}[\cite{Liu20}]
A BiHom-coassociative coalgebra is a 4-tuple $(C, \Delta, \nu, \omega)$, in which $C$ is a linear space, $\nu, \omega: C \rightarrow C$ and $\Delta: C \rightarrow C \otimes C$ are linear maps, such that
\begin{equation}\label{def:coass-01}
\nu \circ \omega=\omega \circ \nu,\quad (\nu \otimes \nu) \circ \Delta=\Delta \circ \nu,\quad(\omega \otimes \omega) \circ \Delta=\Delta \circ \omega
\end{equation}
 and
\begin{equation}\label{def:coass-02}
(\Delta \otimes \nu) \circ \Delta=(\omega \otimes \Delta) \circ \Delta.
\end{equation}
The  above condition \eqref{def:coass-02} is called the BiHom-coassociativity condition and the maps $\nu$ and $\omega$  are called the structure maps of $C$.
A BiHom-coassociative coalgebra is simply called a BiHom-coalgebra in the following text.
\end{definition}

A morphism $f:\left(C, \Delta_{C}, \nu_{C}, \omega_{C}\right) \rightarrow\left(D, \Delta_{D}, \nu_{D}, \omega_{D}\right)$ of BiHom-coassociative coalgebras is a linear map $f: C \rightarrow D$ such that $\nu_{D} \circ f=f\circ \nu_{C}, \omega_{D} \circ f=f \circ \omega_{C}$ and $(f \otimes f) \circ \Delta_{C}=\Delta_{D} \circ f$.

\begin{definition}
An BiHom-Frobenius algebra is a 7-tuple $(H, \cdot, \Delta, \alpha, \beta, \nu, \omega)$, where $(H, \cdot, \alpha, \beta)$ is a BiHom-associative algebra, $(H, \Delta, \nu, \omega)$ is a BiHom-coassociative coalgebra and the following compatibility conditions are satisfied, for all $x, y \in H$:
\begin{eqnarray}\label{def:bihom-inf-01}
&&\Delta \circ \mu=(\mu \otimes \beta) \circ(\omega \otimes \Delta)=(\alpha \otimes \mu) \circ(\Delta \otimes \nu), \\
&&\alpha \circ \nu=\nu \circ \alpha, \quad \alpha \circ \omega=\omega \circ \alpha, \quad \beta \circ \nu=\nu \circ \beta, \quad \beta \circ \omega=\omega \circ \beta, \\
&&(\alpha \otimes \alpha) \circ \Delta=\Delta \circ \alpha, \quad(\beta \otimes \beta) \circ \Delta=\Delta \circ \beta, \\
&&\nu(xy)=\nu(x) \nu(y), \quad \omega(xy)=\omega(x)  \omega(y).
\end{eqnarray}
\end{definition}

In terms of elements, the above condition \eqref{def:bihom-inf-01} can be rewritten as
\begin{eqnarray}\label{def:bihom-inf-02}
\Delta(xy)=\omega(x)  y_{1} \otimes \beta\left(y_{2}\right)=\alpha\left(x_{1}\right) \otimes x_{2}  \nu(y)
\end{eqnarray}
where we use the Sweedler notation $\Delta(x):= x_{1} \otimes x_{2}$.


When $\nu=\beta,\  \omega=\alpha$, we obtain the BiHom-Frobenius algebra satisfying the compatibility condition:
\begin{eqnarray}
\Delta(xy)=(\mu\ot \beta)(\alpha(x\cdot\Delta(y))=(\alpha\ot \mu)(\Delta(x)\cdot\beta(y)),
\end{eqnarray}
or, in the Sweedler notation
\begin{eqnarray}\label{def:bihom-inf-04}
\Delta(xy)
=\alpha(x) y_{1} \otimes \beta\left(y_{2}\right)=\alpha\left(x_{1}\right) \otimes x_{2} \beta(y).
\end{eqnarray}

In the following of this paper, we assume $\nu=\beta, \omega=\alpha$ and denote this type of BiHom-Frobenius algebra by  $(A,\cdot, \Delta, \alpha, \beta)$ or simply by $(A,  \alpha, \beta)$.

For a BiHom-algebra $(H,\alhh,\behh)$ and a BiHom-vector space $(V,\alvv,\bevv)$ if there is a linear map $\trr: H\ot V \to V, x\ot v\mapsto x\trr v$ such that
\begin{equation}
(xy)\trr \bevv(v)=\alhh(x)\trr(y\trr v),
 \end{equation}
for all $x, y\in H, v\in V$, then $(V, \trr )$ is called a left $H$-BiHom-module. For a BiHom-coalgebra $(H,\alhh,\behh)$, if there is a
linear map $\phi :V\to H\ot V, \phi(v)=v\moi\ot v\mo$ such that
\begin{equation}
\Delta_H(v\loi)\ot \bevv(v\loo)= \alhh(v\loi)\ot \phi(v\loo),
 \end{equation}
then $(V, \phi)$ is called a left $H$-BiHom-comodule.
If $(A,\alaa,\beaa)$ and $(H,\alhh,\behh)$ are   BiHom-algebras, $(A,\alaa,\beaa)$ is a left $H$-BiHom-module and
\begin{equation}
\alhh(x)\trr (ab)=(x\trr a)\beaa(b),
 \end{equation}
then $(A, \cdot, \trr)$ is called a left $H$-BiHom-module  algebra.
 If $(H,\alhh,\behh)$ is a BiHom-coalgebra and $(A,\alaa,\beaa)$  is a    BiHom-algebra,  $(A,\alaa,\beaa)$  is a left $H$-BiHom-comodule and
\begin{equation}
\phi(ab)=\alhh(a\lmoi) \ot a\lmoo \beaa(b),
 \end{equation}
then $(A,\alaa,\beaa)$  is called a left $H$-BiHom-comodule algebra.
The right BiHom-module algebra and right BiHom-comodule algebra are defined similarly.

\begin{definition}
Let $({H},\alhh,\behh)$ be a BiHom-algebra, $(V,\alvv,\bevv)$ a BiHom-vector space. Then $(V,\alvv,\bevv)$ is called an ${H}$-BiHom-bimodule if there is a pair of linear maps $ \trr :{H}\otimes V \to V,(x,v) \to x \trr v$ and $\trl :V\otimes {H} \to V,(v,x) \to v \trl x$  such that the following conditions hold:
\begin{eqnarray}
  &&(xy) \trr \bevv(v) = \alhh(x) \trr (y \trr v),\\
  && (v \trl x) \trl \behh(y) =\alvv(v)\trl(xy),\\
  &&(x\trr v)\trl \behh(y)  = \alhh(x)\trr (v \trl y),
\end{eqnarray}
for all $x,y\in {H}$ and $v\in V.$
\end{definition}
The category of  BiHom-bimodules over $(H,\alhh,\behh)$ is denoted  by ${}_{H}\mathcal{M}{}_{H}$.

\begin{definition}
Let $(H,\alhh,\behh)$ be a BiHom-coalgebra, $(V,\alvv,\bevv)$ a BiHom-vector space. Then $(V,\alvv,\bevv)$ is called an ${A}$-BiHom-bicomodule if there is a pair of linear maps $\phi:V\to {A}\otimes V, \phi(v)=v\moi\ot v\mo$ and $\psi:V\to V\otimes {A}, \psi(v)=v\mo\ot v\mi$  such that the following conditions hold:
\begin{eqnarray}
  &&\alaa(v_{(-1)}) \otimes \phi\left(v_{(0)}\right)=\Delta_{A}\left(v_{(-1)}\right) \otimes \bevv(v_{(0)}),\\
  &&\psi\left(v_{(0)}\right) \otimes \beaa(v_{(1)})=\alvv(v_{(0)}) \otimes \Delta_{A}\left(v_{(1)}\right),\\
    &&\phi\left(v_{(0)}\right) \otimes \beaa(v_{(1)})=\alaa(v_{(-1)}) \otimes \psi\left(v_{(0)}\right),
\end{eqnarray}
for all $v\in V.$
\end{definition}
The category of  BiHom-bicomodules over $(H,\alhh,\behh)$ is denoted by ${}^{H}\mathcal{M}{}^{H}$.

\begin{definition}
Let $({H},\alhh,\behh)$ and  $(A,\alaa,\beaa)$ be BiHom-algebras. An action of $({H},\alhh,\behh)$ on $(A,\alaa,\beaa)$ is a pair of linear maps $\trr:{H}\otimes {A} \to {A},(x, a) \to x \trr a$ and $\trl: {A}\otimes {H} \to {A}, (a, x) \to a \trl x$  such that $(A,\alaa,\beaa)$ is an $H$-BiHom-bimodule and  the following conditions hold:
\begin{eqnarray}
  &&\alhh(x)\trr (ab)=(x\trr a)\beaa(b),\\
  &&\alaa(a) (x\trr b)=(a\trl x)\beaa(b) ,\\
  &&\alaa(a) (b\trl x)=(ab)\trl \behh(x),
\end{eqnarray}
for all $x\in {H}$ and $a, b\in {A}.$ In this case, we call $(A,\trr,\trl,\alpha,\beta)$ to be an $H$-BiHom-bimodule algebra.
\end{definition}

\begin{definition}
Let $({H},\alhh,\behh)$ and  $(A,\alaa,\beaa)$ be BiHom-coalgebras. A coaction of $({H},\alhh,\behh)$ on $(A,\alaa,\beaa)$ is a pair of linear maps $\phi:{A}\to {H}\otimes {A}, \phi(a)=a\moi\ot a\mo$ and $\psi:{A}\to {A}\otimes {H}, \psi(a)=a\mo\ot a\mi$ such that $(A,\alaa,\beaa)$ is an $H$-BiHom-bicomodule and  the following conditions hold:
\begin{eqnarray}
  &&\alhh(a_{(-1)}) \otimes \Delta_{A}\left(a_{(0)}\right)=\phi\left(a_{1}\right) \otimes \beaa(a_{2}),\\
  &&\alaa(a_{1}) \otimes \psi\left(a_{2}\right)=\Delta_{A}\left(a_{(0)}\right) \otimes \behh(a_{(1)}),\\
    &&\alaa(a_{1}) \otimes \phi\left(a_{2}\right)=\psi\left(a_{1}\right) \otimes \beaa(a_{2}),
\end{eqnarray}
for all $a\in {A}.$ In this case, we call $(A,\phi,\psi,\alpha,\beta)$ to be an $H$-BiHom-bicomodule coalgebra.
\end{definition}

\section{Braided BiHom-Frobenius algebras}

\subsection{ BiHom-Frobenius-Hopf bimodules and braided BiHom-Frobenius algebras}
\begin{definition}Let $(H,\alhh,\behh)$ be a BiHom-Frobenius algebra.
 If $(V,\alvv,\bevv)$ is  a left $H$-BiHom-module and a left $H$-BiHom-comodule, and satisfying the following condition
\begin{enumerate}
\item[(HM1)] $\phi(x \trr v)=\alhh(x_{1}) \otimes\left(x_{2}\trr \bevv(v) \right)=(\alhh(x) v_{(-1)}) \otimes \bevv(v_{(0)})$,
\end{enumerate}
then $(V, \trr, \phi,\alpha_{V},\beta_{V})$ is called a left BiHom-Frobenius-Hopf module over $H$.
\end{definition}
The  category of left infinitesimal Hopf modules over $(H,\alhh,\behh)$ is denoted by ${}^{H}_{H}\mathcal{M}$.

\begin{definition}Let $(H,\alhh,\behh)$ be a BiHom-Frobenius algebra.
 If $(V,\alvv,\bevv)$ is  a right $H$-BiHom-module and a right $H$-BiHom-comodule, satisfying
 \begin{enumerate}
\item[(HM2)]  $\psi(v \trl x)=\left(\alvv(v)\trl x_{1}\right) \otimes \behh(x_{2})=\alvv(v_{(0)}) \otimes (v_{(1)} \behh(x)),$
\end{enumerate}
then $(V, \trl, \psi,\alpha_{V},\beta_{V})$ is called a right BiHom-Frobenius-Hopf module over $H$.
\end{definition}
The  category of right infinitesimal Hopf modules over $(H,\alhh,\behh)$ is denoted by $\mathcal{M}{}^{H}_{H}$.

\begin{definition}Let $(H,\alhh,\behh)$ be a BiHom-Frobenius algebra.
 If $(V,\alvv,\bevv)$ is simultaneously a BiHom-bimodule, a  BiHom-bicomodule, a left BiHom-Frobenius-Hopf module, a right BiHom-Frobenius-Hopf module over $(H,\alhh,\behh)$ and satisfying
 the following compatibility conditions
 \begin{enumerate}
\item[(HM3)]  $\psi(x \trr v)=\left(\alhh(x) \trr  v_{(0)}\right) \otimes \behh(v_{(1)})$,
\item[(HM4)]  $ \phi(v \trl x)=\alhh(v_{(-1)}) \otimes\left(v_{(0)} \trl \behh(x)\right).$
\end{enumerate}
then $(V,\trr,\trl,\alvv,\bevv)$ is called a BiHom-Frobenius-Hopf bimodule over $H$.
\end{definition}
We denote  the  category of  BiHom-Frobenius-Hopf bimodules over $H$ by ${}^{H}_{H}\mathcal{M}{}^{H}_{H}$.

The second  motivation of introducing this  category is a suitable place to define our braided BiHom-Frobenius algebras.

\begin{definition} Let $(H,\alhh,\behh)$  be a BiHom-Frobenius algebra.
If $(A,\alaa,\beaa)$ be a BiHom-algebra and a BiHom-coalgebra in ${}^{H}_{H}\mathcal{M}{}^{H}_{H}$, we call $(A,\alaa,\beaa)$ a \emph{braided BiHom-Frobenius algebra}, if the following condition is satisfied
\begin{enumerate}
\item[(BB1)]
$\Delta_{A}(a b)=\alaa(a_{1}) \otimes a_{2} \beaa(b)+\alaa(a_{(0)}) \otimes\left(a_{(1)} \trr \beaa(b)\right)$

$\qquad\quad\ =\alaa(a) b_{1} \otimes \beaa(b_{2})+(\alaa(a)\trl b_{(-1)}) \otimes \beaa(b_{(0)}).$
\end{enumerate}
\end{definition}

Here $(A,\alaa,\beaa)$ is a BiHom-algebra and a BiHom-coalgebra in ${}^{H}_{H}\mathcal{M}{}^{H}_{H}$ means that $(A,\alaa,\beaa)$ is simultaneously an $H$-BiHom-bimodule algebra (coalgebra) and an $H$-BiHom-bicomodule algebra (coalgebra).

Now we construction BiHom-Frobenius algebra from braided BiHom-Frobenius algebra.
Let $(H,\alhh,\behh)$  be a BiHom-Frobenius algebra, $(A,\alaa,\beaa)$ be a BiHom-algebra and a BiHom-coalgebra in ${}^{H}_{H}\mathcal{M}{}^{H}_{H}$.
We define the Bi-Hom maps, multiplication and comultiplication on the direct sum vector space $E=A \oplus H$ by
$$
\begin{aligned}
&{\alpha_E}(a, x):=(\alpha_A(a), \alpha_H(x)),\quad{\beta_E}(a, x):=(\beta_A(a), \beta_H(x)),\\
&(a, x)(b, y):=(a b+x\trr b+a \trl y, x y), \\
&\Delta_{E}(a, x):=\Delta_{A}(a)+\phi(a)+\psi(a)+\Delta_{H}(x).
\end{aligned}
$$
This is called the biproduct of $(A,\alaa,\beaa)$ and $({H},\alhh,\behh)$ which will be  denoted   by $A\lbiprod H$.

\begin{theorem} Let  $(H,\alhh,\behh)$ be a BiHom-Frobenius algebra.
Then the biproduct $A\lbiprod H$ forms a BiHom-Frobenius algebra if and only if  $(A,\alaa,\beaa)$ is a braided BiHom-Frobenius algebra in ${}^{H}_{H}\mathcal{M}{}^{H}_{H}$.
\end{theorem}

\begin{proof}
First, we need to prove the multiplication defined above is BiHom-associative. For $\forall a, b, c\in A$, $\forall x, y, z\in H$, we will check that
$((a, x) (b, y)) {\beta_E}(c, z)={\alpha_E}(a, x)((b, y) (c, z))$.
By definition, the left hand side is equal to
$$\begin{aligned}
&\big((a, x) (b, y)\big) {\beta_E}(c, z)\\
=&\big(a b+x \trr b+a \trl y, x y\big) (\beaa(c), \behh(z))\\
=&\big((a b) \beaa(c)+(x \trr b) \beaa(c)+(a \trl y) \beaa(c)\\
&+(x y)\trr \beaa(c)+(ab)\trl \behh(z)+(x \trr b) \trl \behh(z)+(a \trl y) \trl \behh(z),(x y) \behh(z)\big)
\end{aligned}
$$
and the right hand side is equal to
$$\begin{aligned}
&{\alpha_E}(a, x)\big((b, y)(c, z)\big)\\
=&(\alaa(a), \alhh(x))(b c+y\trr c+b \trl z, y z)\\
=&\big((\alaa(a)(b c)+\alaa(a)(y\trr c)+\alaa(a)(b \trl z)\\
&+\alhh(x) \trr (b c)+\alhh(x) \trr (y\trr c)+\alhh(x)\trr (b \trl z)+\alaa(a) \trl(y z), \alhh(x)(y z)\big).
\end{aligned}
$$
Thus the two sides are equal to each other if and only if $(A, \trr, \trl,\alpha_{A},\beta_{A})$ is a BiHom-bimodule algebra over $H$.

Next, we need to prove the comultiplication is BiHom-coassociative. For any $(a, x)\in A \oplus H$, we need to prove $({\alpha_E} \otimes \Delta) \Delta_E(a, x)=(\Delta \otimes {\beta_E}) \Delta_E(a, x)$. By definition, the left hand side is equal to
$$
\begin{aligned}
&({\alpha_E} \otimes \Delta) \Delta_E(a, x)\\
=&({\alpha_E} \otimes \Delta)\left(a_{1} \otimes a_{2}+a_{(-1)} \otimes a_{(0)}+a_{(0)} \otimes a_{(1)}+x_{1} \otimes x_{2}\right)\\
=&\alaa(a_{1}) \otimes \Delta_{A}\left(a_{2}\right)+\alaa(a_{1}) \otimes \phi\left(a_{2}\right)+\alaa(a_{1}) \otimes \psi\left(a_{2}\right)\\
&+\alhh(a_{(-1)}) \otimes \Delta_{A}\left(a_{(0)}\right)+\alhh(a_{(-1)}) \otimes \phi\left(a_{(0)}\right)+\alhh(a_{(-1)}) \otimes \psi\left(a_{(0)}\right)\\
&+\alaa(a_{(0)}) \otimes \Delta_{H}\left(a_{(1)}\right)+\alhh(x_{1}) \otimes \Delta_{H}\left(x_{2}\right)
\end{aligned}
$$
and the right hand side is equal to
$$
\begin{aligned}
&(\Delta \otimes {\beta_E}) \Delta_E(a, x)\\
=&(\Delta\otimes {\beta_E})\left(a_{1} \otimes a_{2}+a_{(-1)} \otimes a_{(0)}+a_{(0)} \otimes a_{(1)}+x_{1} \otimes x_{2}\right)\\
=&\Delta_{A}\left(a_{1}\right) \otimes \beaa(a_{2})+\phi\left(a_{1}\right) \otimes \beaa(a_{2})+\psi\left(a_{1}\right) \otimes \beaa(a_{2})
+\Delta_{H}\left(a_{(-1)}\right) \otimes \beaa(a_{(0)})\\
&+\Delta_{A}\left(a_{(0)}\right) \otimes \behh(a_{(1)})+\phi\left(a_{(0)}\right) \otimes \behh(a_{(1)})
+\psi\left(a_{(0)}\right) \otimes \behh(a_{(1)})+\Delta_{H}\left(x_{1}\right) \otimes \behh(x_{2}).
\end{aligned}
$$
Thus the two sides are equal to each other if and only if $(A, \phi, \psi,\alpha_{A},\beta_{A})$ is a BiHom-bicomodule coalgebra over $H$.

 Finally, we show the compatibility condition:
$$\Delta_E((a, x) (b, y))=({\alpha_E} \otimes \mu_E)(\Delta_E(a, x) \cdot{\beta_E}(b, y))=(\mu_E \otimes {\beta_E})({\alpha_E}(a, x) \cdot \Delta(b, y)).$$

By direct computations, the left hand side is equal to
$$\begin{aligned}
&\Delta_E((a, x) (b, y))\\
=&\Delta_E(ab+x \trr b+a \trl y, x y)\\
=&\Delta_A(a b)+\phi(a b)+\psi(a b)+\Delta_A(x \trr b)+\phi(x \trr b)+\psi(x \trr b)\\
&+\Delta_A(a \trl y)+\phi(a \trl y)+\psi(a \trl y)+\Delta_{H}(x y),
\end{aligned}
$$
the middle and right hand sides are equal to
$$\begin{aligned}
&({\alpha_E} \otimes \mu_E)(\Delta_E(a, x) \cdot{\beta_E}(b, y))\\
=&({\alpha_E} \otimes \mu_E)\left(\left(a_{1} \otimes a_{2}+a_{(-1)} \otimes a_{(0)}+a_{(0)} \otimes a_{(1)}+x_{1} \otimes x_{2}\right) \cdot(\beaa(b), \behh(y))\right)\\
=&\alaa(a_{1}) \otimes\left(a_{2} \beaa(b)+a_{2} \trl \behh(y)\right)+\alhh(a_{(-1)}) \otimes\left(a_{(0)} \beaa(b)+a_{(0)} \trl \behh(y)\right)\\
&+\alaa(a_{(0)}) \otimes\left(a_{(1)} \trr \beaa(b)\right)+\alaa(a_{(0)}) \otimes\left(a_{(1)} \behh(y)\right)\\
&+\alhh(x_{1}) \otimes\left(x_{2} \trr \beaa(b)\right)+\alhh(x_{1}) \otimes\left(x_{2} \behh(y)\right),
\end{aligned}
$$
$$\begin{aligned}
&(\mu_E \otimes {\beta_E})({\alpha_E}(a, x) \cdot \Delta(b, y))\\
=&(\mu_E \otimes {\beta_E})\left((\alaa(a), \alhh(x)) \cdot\left(b_{1} \otimes b_{2}+b_{(-1)} \otimes b_{(0)}+b_{(0)} \otimes b_{(1)}+y_{1} \otimes y_{2}\right)\right)\\
=&+\left(\alaa(a) b_{1}+\alhh(x) \trr b_{1}\right) \otimes \beaa(b_{2})+(\alaa(a) \trl b_{(-1)}) \otimes \beaa(b_{(0)})\\
&+\left(\alhh(x) b_{(-1)}\right) \otimes \beaa(b_{(0)})+\left(\alaa(a) b_{(0)}+\alhh(x)\trr b_{(0)}\right) \otimes \behh(b_{(1)})\\
&+\left(\alaa(a)\trl y_{1}\right) \otimes \behh(y_{2})+\left(\alhh(x) y_{1}\right) \otimes \behh(y_{2}).
\end{aligned}
$$
Then the two sides are equal to each other if and only if
\begin{enumerate}
\item[]
(1) $\Delta_{A}(a b)=\alaa(a_{1}) \otimes a_{2} \beaa(b)+\alaa(a_{(0)}) \otimes\left(a_{(1)} \trr \beaa(b)\right)$

$\qquad\qquad\quad=\alaa(a) b_{1} \otimes \beaa(b_{2})+(\alaa(a)\trl b_{(-1)}) \otimes \beaa(b_{(0)})$,

(2)$\phi(x \trr b)=\alhh(x_{1}) \otimes\left(x_{2}\trr \beaa(b) \right)=(\alhh(x) b_{(-1)}) \otimes \beaa(b_{(0)})$,

(3) $\psi(a \trl y)=\left(\alaa(a)\trl y_{1}\right) \otimes \behh(y_{2})=\alaa(a_{(0)}) \otimes (a_{(1)} \behh(y))$,

(4) $\psi(x \trr b)=\left(\alhh(x) \trr  b_{(0)}\right) \otimes \behh(b_{(1)})$,

(5) $\phi(a \trl y)=\alhh(a_{(-1)}) \otimes\left(a_{(0)} \trl \behh(y)\right)$,

(6) $\phi(a b)=\alhh(a_{(-1)}) \otimes\left(a_{(0)} \beaa(b)\right) $,

(7) $\psi(a b)=(\alaa(a) b_{(0)}) \otimes \behh(b_{(1)})$,

(8) $\Delta_{A}(x\trr b)=\left(\alhh(x) \trr b_{1}\right) \otimes \beaa(b_{2})$,

(9) $\Delta_{A}(a \trl y)=\alaa(a_{1}) \otimes\left(a_{2} \trl \behh(y)\right)$.
\end{enumerate}
From (6)--(9) we have that $(A,\alaa,\beaa)$ is a left and right $H$-BiHom-module coalgebra and $H$-BiHom-comodule algebra,
from (2)--(5) we get that $(A,\alaa,\beaa)$ is a  BiHom-Frobenius-Hopf bimodule over $H$, and (1) is the condition for $(A,\alaa,\beaa)$ to be a braided BiHom-Frobenius algebra.
The proof is completed.
\end{proof}

\section{Cocycle bicrossproduct of braided BiHom-Frobenius algebras}
The construction of cocycle cross product bialgebras was studied by Y. Bespalov and B. Drabant in \cite{BD99,BD01}.
In this section, we give a BiHom-Frobenius algebras version.
\subsection{Matched pair of braided BiHom-Frobenius algebras}
In this subsection, we introduce the notion of matched pairs of BiHom-algebras, BiHom-coalgebras and BiHom-Frobenius algebras.
Let $A, H$ be both  BiHom-algebras and   BiHom-coalgebras.   For $a, b\in A$, $x, y\in H$,  we denote maps
\begin{align*}
&\ppr: H \otimes A \to A,\quad \ppl: A\otimes H\to A,\\
&\trr: A\otimes H \to H,\quad \trl: H\otimes A \to H,\\
&\phi: A \to H \otimes A,\quad  \psi: A \to A\otimes H,\\
&\rho: H  \to A\otimes H,\quad  \gamma: H \to H \otimes A,
\end{align*}
by
\begin{eqnarray*}
&& \ppr (x \otimes a) =x \ppr a, \quad \ppl(a\otimes x) = a \ppl x, \\
&& \trr (a \otimes x) = a \trr x, \quad \trl(x \otimes a) = x \triangleleft a, \\
&& \phi (a)=\sum a\lmoi\ot a\loo, \quad \psi (a) = \sum a\loo \ot a\lmi,\\
&& \rho (x)=\sum x\boi\ot x\boo, \quad \gamma (x) = \sum x\boo \ot x\bi.
\end{eqnarray*}

\begin{definition}
A \emph{matched pair} of   BiHom-algebras is a system $(A, \, {H},\, \trl, \, \trr, \, \ppl, \, \ppr)$ consisting
of two   BiHom-algebras $(A,\alaa,\beaa)$ and $({H},\alhh,\behh)$ and four bilinear maps $\triangleleft : {H}\otimes A\to {H}$, $\trr : {A} \otimes H
\to H$, $\ppl:A \otimes {H} \to A$, $\ppr: H\otimes {A} \to {A}$ such that $({H},\trr,\trl)$ is an $A$-BiHom-bimodule,  $(A,\ppr,\ppl)$ is an ${H}$-BiHom-bimodule and satisfying the following compatibilities for all $a, b\in A$, $x, y \in {H}$:
\begin{enumerate}
\item[(M1)] $\alhh(x)\ppr ( ab)=(x\ppr a) \beaa(b)+ (x\trl a)\ppr \beaa(b)$,

\item[(M2)] $( ab) \ppl \behh(x)=\alaa(a)(b \ppl x) + \alaa(a) \ppl ( b \trr x)$,

\item[(M3)]$\alaa(a) \trr (x y)=(a \ppl x) \trr \behh(y) + (a \trr x)  \behh(y)$,

\item[(M4)] $ (xy) \trl \beaa(a)= \alhh(x)\trl (y\ppr a) + \alhh(x) ( y\trl a)$,

\item[(M5)] $  \alaa(a)(x\ppr b)+ \alaa(a) \ppl (x\trl b)=(a \ppl x)\beaa( b)+ (a \trr x)\ppr \beaa(b) $,

\item[(M6)] $\alhh(x)\trl (a \ppl y)+ \alhh(x)   ( a \trr y)=(x\ppr a) \trr \behh(y) + (x\trl a)   \behh(y)$.
\end{enumerate}
\end{definition}

\begin{proposition}\cite{HHS}
Let  $(A,\alaa,\beaa)$ and $({H},\alhh,\behh)$ be a matched pair of   BiHom-algebras.
Then $E=A \, \bowtie {H}= A \oplus  {H}$, as a vector space, with the multiplication defined for any $a, b\in A$ and $x, y\in {H}$ by
\begin{eqnarray*}
&&{\alpha_E}(a, x):=(\alpha_A(a), \alpha_H(x)),\quad{\beta_E}(a, x):=(\beta_A(a), \beta_H(x)),\\
&&(a, x) (b, y) := \big(ab+ a \ppl y + x\ppr b,\, \, a\trr y + x\trl b + xy \big)
\end{eqnarray*}
is a BiHom-associative algebra called the \emph{bicrossed product} associated to the matched pair of BiHom-algebras $(A,\alaa,\beaa)$ and $({H},\alhh,\behh)$.
\end{proposition}

%

\begin{definition} A \emph{matched pair} of   BiHom-coalgebras is a system $(A, \, {H}, \, \phi, \, \psi, \, \rho, \, \gamma)$ consisting
of two    BiHom-coalgebras $(A,\alaa,\beaa)$ and $({H},\alhh,\behh)$ and four bilinear maps
$\phi: {A}\to H\otimes A$, $\psi: {A}\to A \otimes H$, $\rho: H\to A\otimes {H}$, $\gamma: H \to {H} \ot {A}$
such that $({H},\rho, \gamma)$ is an $A$-BiHom-bicomodule,  $(A,\phi, \, \psi)$ is an ${H}$-BiHom-bicomodule and satisfying the following compatibility conditions for any $a\in A$, $x\in {H}$:
\begin{enumerate}
\item[(MC1)]
$\alhh(a_{(-1)}) \otimes \Delta_{A}\left(a_{(0)}\right)=\phi\left(a_{1}\right) \otimes \beaa(a_{2})+\gamma\left(a_{(-1)}\right) \otimes \beaa(a_{(0)})$,

\item[(MC2)]
$\Delta_{A}\left(a_{(0)}\right) \otimes \behh(a_{(1)})=\alaa(a_{1}) \otimes \psi\left(a_{2}\right)+\alaa(a_{(0)}) \otimes \rho(a_{(1)})$,

\item[(MC3)]
$\alaa(x_{[-1]}) \otimes \Delta_{H}\left(x_{[0]}\right)=\rho\left(x_{1}\right) \otimes \behh(x_{2})+\psi\left(x_{[-1]}\right) \otimes \behh(x_{[0]})$,

\item[(MC4)]
$ \Delta_{H}\left(x_{[0]}\right) \otimes \beaa(x_{[1]})=\alhh(x_{[0]}) \otimes \phi\left(x_{[1]}\right)+\alhh(x_{1}) \otimes \gamma\left(x_{2}\right)$,

\item[(MC5)]
$\alaa(a_{1}) \otimes \phi\left(a_{2}\right)+\alaa(a_{(0)}) \otimes \gamma\left(a_{(1)}\right)=\psi\left(a_{1}\right) \otimes \beaa(a_{2})+\rho\left(a_{(-1)}\right) \otimes \beaa(a_{(0)})$,

\item[(MC6)]
$ \alhh(x_{1}) \otimes \rho\left(x_{2}\right)+\alhh(x_{[0]}) \otimes \psi\left(x_{[1]}\right)=\phi\left(x_{[-1]}\right) \otimes \behh(x_{[0]})+\gamma\left(x_{1}\right) \otimes \behh(x_{2})$.
\end{enumerate}
\end{definition}

\begin{lemma}\label{lem1} Let $(A, H)$ be a matched pair of   BiHom-coalgebras. We define $E=A\lrcoprod H$ as the vector space $A\oplus H$ with   comultiplication
$$\Delta_{E}(a, x)=(\Delta_{A}+\phi+\psi)(a)+(\Delta_{H}+\rho+\gamma)(x),$$
that is
$$\Delta_{E}(a)=\sum a\li \ot a\lii+\sum a\loi \ot a\loo+\sum a\mo\ot a\mi, $$
$$\Delta_{E}(x)=\sum x\li \ot x\lii+\sum  x\boi \ot x\boo+\sum x\boo \ot x\bi.$$
Then  $A\lrcoprod H$ is a BiHom-coalgebra which is called the \emph{bicrossed coproduct} associated to the matched pair of   BiHom-coalgebras $(A,\alaa,\beaa)$ and $({H},\alhh,\behh)$ .
\end{lemma}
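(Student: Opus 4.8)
The plan is to verify coassociativity of $\Delta_E$ on the two summands $A$ and $H$ separately, using the decomposition $\Delta_E = \Delta_A + \phi + \psi$ on $A$ and $\Delta_E = \Delta_H + \rho + \gamma$ on $H$. First I would apply $(\Delta_E \ot \id)\Delta_E$ to an element $a \in A$ and expand: $\Delta_E(a)$ lands in $(A\ot A)\oplus(H\ot A)\oplus(A\ot H)$, and applying $\Delta_E$ again in the first tensor factor produces nine terms, lying in the six spaces $A\ot A\ot A$, $H\ot A\ot A$, $A\ot H\ot A$, $H\ot H\ot A$, $A\ot A\ot H$, $A\ot H\ot H$ (using that $\Delta_A$ keeps us in $A$, $\phi$ introduces an $H$ on the left, $\gamma$ introduces an $A$ on the right of an $H$, etc.). Doing the same for $(\id\ot\Delta_E)\Delta_E(a)$ gives nine terms in the same six spaces. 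Matching components space-by-space, the $A\ot A\ot A$ component is coassociativity of $\Delta_A$; the $H\ot H\ot A$ component is the $A$-bicomodule condition on $(H,\rho,\gamma)$ in its ``$\gamma$ then $\rho$'' form; the $H\ot A\ot A$ component is exactly (CM1); the $A\ot A\ot H$ component is (CM2); the $A\ot H\ot A$ component is (CM5); the $A\ot H\ot H$ component is one of the $H$-bicomodule axioms for $(A,\phi,\psi)$.

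Next I would repeat the computation for $x \in H$, where by the evident symmetry (swapping the roles $A\leftrightarrow H$, $\phi\leftrightarrow\rho$, $\psi\leftrightarrow\gamma$, and the subscripts $(\cdot)\leftrightarrow[\cdot]$) the six matching conditions become: coassociativity of $\Delta_H$; the $H$-bicomodule condition on $(A,\phi,\psi)$; conditions (CM3), (CM4), (CM6); and the remaining $A$-bicomodule axiom for $(H,\rho,\gamma)$. Together with the bicomodule hypotheses built into the definition of a matched pair of coalgebras, this exhausts the six identities (CM1)--(CM6) and shows $(\Delta_E\ot\id)\Delta_E = (\id\ot\Delta_E)\Delta_E$ on all of $E = A\oplus H$, so $D = A\lrcoprod H$ is a coalgebra.

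I do not expect a genuine obstacle here: the statement is the formal dual of the bicrossed product lemma for algebras (Lemma \cite{AM6,Bai10} quoted above), and the proof is a bookkeeping exercise in tracking which of the six ``sectors'' of $H^{\ot?}\ot A^{\ot?}$ each term falls into. The one place to be careful is the consistent use of Sweedler-type notation for the several distinct comodule structures — $a_{(-1)}\ot a_{(0)}$ for $\phi$, $a_{(0)}\ot a_{(1)}$ for $\psi$, $x_{[-1]}\ot x_{[0]}$ for $\rho$, $x_{[0]}\ot x_{[1]}$ for $\gamma$ — so that the coassociativity-type relabelings (e.g.\ $\phi(a_{(0)})\ot a_{(1)} = a_{(-1)}\ot\psi(a_{(0)})$) are applied in the correct sector. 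Since all of this is routine, I would present the $a\in A$ case in detail and remark that the $x\in H$ case is entirely analogous.
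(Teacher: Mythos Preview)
Your approach is correct and essentially identical to the paper's: both expand $(\Delta_E\ot\id)\Delta_E$ and $(\id\ot\Delta_E)\Delta_E$ and match terms sector by sector in $E^{\ot 3}=(A\oplus H)^{\ot 3}$, with the paper doing this for a generic $(a,x)$ at once rather than treating $a$ and $x$ separately. One small bookkeeping slip: for $a\in A$ there are \emph{seven} occupied sectors, not six---you omitted $H\ot A\ot H$, whose matching condition is precisely the $H$-bicomodule compatibility $\phi(a_{(0)})\ot a_{(1)}=a_{(-1)}\ot\psi(a_{(0)})$ (which you in fact mention later as a ``relabeling''); also your $H\ot H\ot A$ sector gives the left $H$-comodule axiom for $A$ via $\phi$, not an $A$-bicomodule condition on $H$.
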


\begin{proof} The proof is by direct computations.
$$
\begin{aligned}
&({\alpha_E} \otimes \Delta_{E}) \Delta_{E}(a, x)\\
=&({\alpha_E} \otimes \Delta_{E})\left(a_{1} \otimes a_{2}+a_{(-1)} \otimes a_{(0)}+a_{(0)} \otimes a_{(1)}+x_{1} \otimes x_{2}+x_{[-1]} \otimes x_{[0]}+x_{[0]} \otimes x_{[1]}\right)\\
=&\alaa(a_{1}) \otimes \Delta_{A}\left(a_{2}\right)+\alaa(a_{1}) \otimes \phi\left(a_{2}\right)+\alaa(a_{1})\otimes \psi\left(a_{2}\right)\\
&+\alhh(a_{(-1)}) \otimes \Delta_{A}\left(a_{(0)}\right)+\alhh(a_{(-1)}) \otimes \phi\left(a_{(0)}\right)+\alhh(a_{(-1)}) \otimes \psi\left(a_{(0)}\right)\\
&+\alaa(a_{(0)}) \otimes \Delta_{H}\left(a_{(1)}\right)+\alaa(a_{(0)}) \otimes \rho\left(a_{(1)}\right)+\alaa(a_{(0)}) \otimes \gamma\left(a_{(1)}\right)\\
&+\alhh(x_{1}) \otimes \Delta_{H}\left(x_{2}\right)+\alhh(x_{1}) \otimes \rho\left(x_{2}\right)+\alhh(x_{1}) \otimes \gamma\left(x_{2}\right)\\
&+\alaa(x_{[-1]}) \otimes \Delta_{H}\left(x_{[0]}\right)+\alaa(x_{[-1]}) \otimes \rho\left(x_{[0]}\right)+\alaa(x_{[-1]}) \otimes \gamma\left(x_{[0]}\right)\\
&+\alhh(x_{[0]}) \otimes \Delta_{A}\left(x_{[1]}\right)+\alhh(x_{[0]}) \otimes \phi\left(x_{[1]}\right)+\alhh(x_{[0]}) \otimes \psi\left(x_{[1]}\right)
\end{aligned}
$$
$$
\begin{aligned}
& (\Delta_{E} \otimes {\beta_E}) \Delta_{E}(a, x)\\
=&(\Delta\otimes {\beta_E})\left(a_{1} \otimes a_{2}+a_{(-1)} \otimes a_{(0)}+a_{(0)} \otimes a_{(1)}+x_{1} \otimes x_{2}+x_{[-1]} \otimes x_{[0]}+x_{[0]} \otimes x_{[1]}\right)\\
=&\Delta_{A}\left(a_{1}\right) \otimes \beaa(a_{2})+\phi\left(a_{1}\right) \otimes \beaa(a_{2})+\psi\left(a_{1}\right) \otimes \beaa(a_{2})\\
&+\Delta_{H}\left(a_{(-1)}\right) \otimes \beaa(a_{(0)})+\rho\left(a_{(-1)}\right) \otimes \beaa(a_{(0)})+\gamma\left(a_{(-1)}\right) \otimes \beaa(a_{(0)})\\
&+\Delta_{A}\left(a_{(0)}\right) \otimes \behh(a_{(1)})+\phi\left(a_{(0)}\right) \otimes \behh(a_{(1)})+\psi\left(a_{(0)}\right) \otimes \behh(a_{(1)})\\
&+\Delta_{H}\left(x_{1}\right) \otimes \behh(x_{2})+\rho\left(x_{1}\right) \otimes \behh(x_{2})+\gamma\left(x_{1}\right) \otimes \behh(x_{2})\\
&+\Delta_{A}\left(x_{[-1])}\right) \otimes \behh(x_{[0]})+\phi\left(x_{[-1]}\right) \otimes \behh(x_{[0]})+\psi\left(x_{[-1]}\right) \otimes \behh(x_{[0]})\\
&+\Delta_{H}\left(x_{[0]}\right) \otimes \beaa(x_{[1]})+\rho\left(x_{[0]}\right) \otimes \beaa(x_{[1]})+\gamma\left(x_{[0]}\right) \otimes \beaa(x_{[1]})
\end{aligned}
$$
Thus the comultiplication  is coassociative if and only if (MC1)--(MC6) hold.
\end{proof}

In the following of this section, we construct BiHom-Frobenius algebra from the double cross biproduct of a pair of braided BiHom-Frobenius algebras.

\begin{definition}
Let $(A,\alaa,\beaa)$ be simultaneously a  BiHom-algebra and a  BiHom-coalgebra.
 If $(V,\alvv,\bevv)$ is  a left $A$-BiHom-module and left $A$-BiHom-comodule, satisfying
\begin{align}
&\rho (a\trr v)= \alaa(a) v\boi\ot \bevv(v\boo) = \alaa(a\li) \ot (a\lii\trr \bevv(v)),
\end{align}
\noindent then $V$ is called a left BiHom-Frobenius-Hopf module over $A$.
\end{definition}
We denote  the  category of  left BiHom-Frobenius-Hopf modules over $(A,\alaa,\beaa)$ by ${}^{A}_{A}\mathcal{M}$.

\begin{definition}
Let $(A,\alaa,\beaa)$ be simultaneously a  BiHom-algebra and a BiHom-coalgebra.
 If $(V,\alvv,\bevv)$ is  a right $A$-BiHom-module and a right $A$-BiHom-comodule, satisfying
\begin{align}
&\gamma (v\trl a) = \alvv(v\boo)\ot (v\bi \beaa(a)) =\alvv(v)\trl a\li\ot \beaa(a\lii),
\end{align}
\noindent then $(V,\alvv,\bevv)$ is called a right BiHom-Frobenius-Hopf module over $A$.
\end{definition}
We denote  the  category of right BiHom-Frobenius-Hopf modules over $(A,\alaa,\beaa)$ by
$\mathcal{M}{}^{A}_{A}$.

\begin{definition}
Let $(A,\alaa,\beaa)$ be simultaneously a BiHom-algebra and a BiHom-coalgebra.
 If $(V,\alvv,\bevv)$ is simultaneously a BiHom-bimodule, a  BiHom-bicomodule, a left and  right BiHom-Frobenius-Hopf module over $(A,\alaa,\beaa)$ and satisfying
 the following compatibility conditions
\begin{align}
&\gamma (a \trr v)=\left(\alaa(a) \trr  v\boo\right) \otimes \beaa(v\bi),\quad \rho(v \trl a)=\alaa(v\boi) \otimes\left( v\boo \trl \beaa(a)\right).
\end{align}
then $(V,\alvv,\bevv)$ is called a BiHom-Frobenius-Hopf bimodule over $A$.
\end{definition}
We denote  the  category of  BiHom-Frobenius-Hopf bimodules over $(A,\alaa,\beaa)$ by ${}^{A}_{A}\mathcal{M}{}^{A}_{A}$.

\begin{definition}
If $(A,\alaa,\beaa)$ be a BiHom-algebra and  a BiHom-coalgebra and  $(H,\alhh,\behh)$ is a BiHom-Frobenius-Hopf bimodule over $A$, we call $(H,\alhh,\behh)$ a \emph{braided BiHom-Frobenius algebra} in ${}^{A}_{A}\mathcal{M}^{A}_{A}$, if the following condition is satisfied:
\begin{enumerate}
\item[(BB2)]
$\Delta_{H}(x y)=\alhh(x_{1}) \otimes x_{2} \behh(y)+\alhh(x) y_{1} \otimes \behh(y_{2})+\alhh(x_{[0]}) \otimes\left(x_{[1]} \trr \behh(y)\right)\\
+\left(\alhh(x) \trl y_{[-1]}\right) \otimes \behh(y_{[0]}).$
\end{enumerate}
\end{definition}

\begin{definition}\label{dmp}
Let $A, H$ be both   BiHom-algebras and   BiHom-coalgebras. If  the following conditions hold:
\begin{enumerate}
\item[(DM1)]  $\phi(a b)=\alhh(a_{(-1)}) \otimes\left(a_{(0)} \beaa(b)\right)=(\alaa(a) \trr b_{(-1)}) \otimes \beaa(b_{(0)})$,
\item[(DM2)] $\psi(a b)=(\alaa(a) b_{(0)}) \otimes \behh(b_{(1)})=\alaa(a_{(0)}) \otimes\left(a_{(1)} \trl \beaa(b)\right)$,
\item[(DM3)] $\rho(x y)=\alaa(x_{[-1]}) \otimes\left(x_{[0]} \behh(y)\right)=\left(\alhh(x) \ppr y_{[-1]}\right) \otimes \behh(y_{[0]})$,
\item[(DM4)] $\gamma(x y)=\alhh(x_{[0]})\otimes (x_{[1]}\ppl \behh(y))=\alhh(x)y_{[0]}\otimes \beaa(y_{[1]})$,
\item[(DM5)] $\Delta_{A}(x \ppr b)=\alaa(x_{[-1]}) \otimes\left(x_{[0]} \ppr \beaa(b)\right)=\left(\alhh(x) \ppr b_{1}\right) \otimes \beaa(b_{2})$,
\item[(DM6)] $\Delta_{A}(a\ppl y)=\alaa(a_{1}) \otimes\left(a_{2} \ppl \behh(y)\right)=\left(\alaa(a)\ppl y_{[0]}\right) \otimes \beaa(y_{[1]})$,
\item[(DM7)] $\Delta_{H}(a \trr y)=\alhh(a_{(-1)}) \otimes\left(a_{(0)}\trr \behh(y)\right)=\left(\alaa(a) \trr y_{1}\right) \otimes \behh(y_{2})$,
\item[(DM8)] $\Delta_{H}(x \trl b)=\alhh(x_{1}) \otimes\left(x_{2}  \trl \beaa(b)\right)=\left(\alhh(x)\trl b_{(0)}\right) \otimes \behh(b_{(1)})$,
\item[(DM9)]
$\phi(x \ppr b)+\gamma(x\trl b)$\\
$=\alhh (x_{1}) \otimes\left(x_{2} \ppr \beaa(b)\right)+\alhh(x) b_{(-1)} \otimes \beaa(b_{(0)})$\\
$=\alhh(x_{[0]}) \otimes x_{[1]}\beaa( b)+\left(\alhh(x)\trl b_{1}\right) \otimes \beaa(b_{2})$,
\item[(DM10)]
$\psi(a\ppl y)+\rho(a \trr y)$\\
$=\alaa(a_{(0)}) \otimes a_{(1)}\behh( y)+\left(\alaa(a)\ppl y_{1}\right) \otimes \behh(y_{2})$\\
$=\alaa(a_{1}) \otimes\left(a_{2} \trr \behh(y)\right)+\alaa(a) y_{[-1]} \otimes \behh(y_{[0]})$,
\item[(DM11)]
$\psi(x \ppr b)+\rho(x\trl b)=(\alhh(x) \ppr b_{(0)}) \otimes \behh(b_{(1)})=\alhh(x_{[-1]}) \otimes\left(x_{[0]} \trl \beaa(b)\right)$,
\item[(DM12)]
$\phi(a\ppl y)+\gamma(a \trr y)=\alhh(a_{(-1)}) \otimes\left(a_{(0)}\ppl \behh(y)\right)=\left(\alaa(a)\trr y_{[0]}\right)\otimes \beaa(y_{[1]})$.
\end{enumerate}
\noindent then $(A, H)$ is called a \emph{double matched pair}.
\end{definition}

\begin{theorem}\label{main1}Let $(A, H)$ be matched pair of   BiHom-algebras and   BiHom-coalgebras,
$(A, H)$ be a double matched pair, $(A,\alaa,\beaa)$ is  a  braided BiHom-Frobenius algebra in
${}^{H}_{H}\mathcal{M}^{H}_{H}$, $(H,\alhh,\behh)$ is  a braided BiHom-Frobenius algebra in
${}^{A}_{A}\mathcal{M}^{A}_{A}$. If we define the double cross biproduct of $A$ and
$H$, denoted by $E=A\lrbiprod H$, $A\lrbiprod H=A\bowtie H$ as
BiHom-algebra, $A\lrbiprod H=A\lrcoprod H$ as   BiHom-coalgebra, then
$A\lrbiprod H$ becomes a BiHom-Frobenius algebra.
\end{theorem}

\begin{proof}  We only need to check the compatibility condition
$$\Delta_E((a, x) (b, y))=({\alpha_E} \otimes \mu_E)(\Delta_E(a, x) \cdot{\beta_E}(b, y))=(\mu_E \otimes {\beta_E})({\alpha_E}(a, x) \cdot \Delta(b, y)).$$
The left hand side is equal to
$$
\begin{aligned}
&\Delta_E((a, x) (b, y))\\
=&\Delta_E(ab+x \ppr b+a\ppl y, x y+x \trl b+a\trr y)\\
=&\Delta_A(a b)+\phi(a b)+\psi(a b)+\Delta_A(x \ppr b)+\phi(x \ppr b)+\psi(x \ppr b)\\
&+\Delta_{A}(a\ppl y)+\phi(a\ppl y)+\psi(a\ppl y)+\Delta_{H}(x y)+\rho(x y)+\gamma(x y)\\
&+\Delta_{H}(x \trl b)+\rho(x \trl b)+\gamma(x \trl b)+\Delta_{H}(a \trr y)+\rho(a \trr y)+\gamma(a \trr y),
\end{aligned}
$$
the middle and right hand sides are equal to
$$
\begin{aligned}
&({\alpha_E} \otimes \mu_E)(\Delta_E(a, x) \cdot{\beta_E}(b, y))\\
=&\alaa(a_{1}) \otimes a_{2} \beaa(b)+\alaa(a_{1}) \otimes\left(a_{2}\ppl \behh(y)\right)+\alaa(a_{1}) \otimes\left(a_{2} \trr \behh(y)\right)\\
&+\alhh(a_{(-1)}) \otimes a_{(0)} \beaa(b)+\alhh(a_{(-1)}) \otimes\left(a_{(0)}\ppl \behh(y)\right)+\alhh(a_{(-1)}) \otimes\left(a_{(0)} \trr \behh(y)\right)\\
&+\alaa(a_{(0)})\otimes\left(a_{(1)} \ppr \beaa(b)\right)+\alaa(a_{(0)}) \otimes\left(a_{(1)} \trl \beaa(b)\right)+\alaa(a_{(0)}) \otimes a_{(1)}\behh(y)\\
&+\alhh(x_{1}) \otimes\left(x_{2}\ppr \beaa(b)\right)+\alhh(x_{1}) \otimes\left(x_{2} \trl \beaa(b)\right)+\alhh(x_{1}) \otimes x_{2} \behh(y) \\
&+\alaa(x\boi)\ot(x\boo\ppr \beaa(b))+\alaa(x\boi)\ot(x\boo\trl \beaa(b))+\alaa(x\boi)\ot x\boo \behh(y)\\
&+\alhh(x\boo)\ot x\bi \beaa(b)+\alhh(x\boo)\ot (x\bi\ppl \behh(y))+\alhh(x\boo)\ot (x\bi\trr \behh(y)),
\end{aligned}
$$
$$
\begin{aligned}
&(\mu_E \otimes {\beta_E})({\alpha_E}(a, x) \cdot \Delta(b, y))\\
=&\alaa(a) b_{1} \otimes \beaa(b_{2})+\left(\alhh(x) \ppr b_{1}\right) \otimes \beaa(b_{2})+\left(\alhh(x) \trl b_{1}\right) \otimes \beaa(b_{2})\\
&+(\alaa(a)\ppl b_{(-1)}) \otimes \beaa(b_{(0)})+(\alaa(a) \trr b_{(-1)}) \otimes \beaa(b_{(0)})+\alhh(x) b_{(-1)} \otimes \beaa(b_{(0)})\\
&+\alaa(a) b_{(0)} \otimes \behh(b_{(1)})+(\alhh(x) \ppr b_{(0)}) \otimes \behh(b_{(1)})+(\alhh(x) \trl b_{(0)}) \otimes \behh(b_{(1)})\\
&+\left(\alaa(a)\ppl y_{1}\right) \otimes \behh(y_{2})+\left(\alaa(a) \trr y_{1}\right) \otimes \behh(y_{2})+\alhh(x) y_{1}\ot \behh(y_{2})\\
&+\alaa(a)y\boi\ot \behh(y\boo)+(\alhh(x)\ppr y\boi)\ot \behh(y\boo)+(\alhh(x)\trl y\boi)\ot \behh(y\boo)\\
&+(\alaa(a)\ppl y\boo)\ot \beaa(y\bi)+(\alaa(a)\trr y\boo)\ot \beaa(y\bi)+\alhh(x)y\boo\ot \beaa(y\bi).
\end{aligned}
$$
Thus both sides are equal to each other if and only if  the double matched pair conditions (CDM1)--(CDM12) in Definition \ref{dmp} hold. The proof is completed.
\end{proof}

\subsection{Cocycle bicrossproduct BiHom-Frobenius algebras}

In this section, we construct cocycle bicrossproduct BiHom-Frobenius algebras, which is a generalization of double cross biproduct.

Let $A, H$ be both   algebras and   coalgebras.   For $a, b\in A$, $x, y\in H$,  we denote maps
\begin{align*}
&\sigma: H\otimes H \to A,\quad \theta: A\otimes A \to H,\\
&P: A  \to H\otimes H,\quad  Q: H \to A\otimes A,
\end{align*}
by
\begin{eqnarray*}
&& \sigma (x,y)  \in  A, \quad \theta(a, b) \in H,\\
&& P(a)=\sum a\ppi\ot  a\pii, \quad Q(x) = \sum x\qi \ot x\qii.
\end{eqnarray*}

A bilinear map $\si: H\ot H\to A$ is called a cocycle on $(H,\alhh,\behh)$ if
\begin{enumerate}
\item[(CC1)] $\sigma(x y, \behh(z))+\sigma(x, y)\ppl \behh(z)=\alhh(x) \ppr \sigma(y, z)+{\sigma}(\alhh(x), y z).$
\end{enumerate}

A bilinear map $\theta: A\ot A\to H$ is called a cocycle on $(A,\alaa,\beaa)$ if
\begin{enumerate}
\item[(CC2)] $\theta(a b, \beaa(c))+\theta(a, b) \triangleleft \beaa(c)=\alaa(a)\trr \theta(b, c)+\theta(\alaa(a), b c).$
\end{enumerate}

A bilinear map $P: A\to H\ot H$ is called a cycle on $(A,\alaa,\beaa)$ if
\begin{enumerate}
\item[(CC3)]  $\alhh(a\ppi)\ot \Delta(a\ppi)+\alhh(a\lmoi)\ot P(a\lmoo)\\
=\Delta_H(a\ppi)\ot \behh(a\pii)+P(a\lmoo)\ot \behh(a\mi)$.
\end{enumerate}

A bilinear map $Q: H\to A\ot A$ is called a cycle on $(H,\alhh,\behh)$ if
\begin{enumerate}
\item[(CC4)]  $ \alaa(x\qi)\ot \Delta_A(x\qii)+\alaa(x\boi)\ot Q(x\boo)=\Delta_A(x\qi)\ot \beaa(x\qii)+Q(x\boo)\ot \beaa(x\bi)$.
\end{enumerate}

In the following definitions, we introduced the concept of cocycle
   BiHom-algebras and cycle    BiHom-coalgebras, which are  in fact not really
ordinary    BiHom-algebras and    BiHom-coalgebras, but generalized ones.

\begin{definition}
(i): Let $\si$ be a cocycle on a vector space  $(H,\alhh,\behh)$ equipped with a multiplication $H \ot H \to H$, satisfying the the
following cocycle BiHom-associative identity:
\begin{enumerate}
\item[(CC5)] $\alhh(x)(y z)+\alhh(x) \triangleleft \sigma(y, z)=(x y) \behh(z)+\sigma(x, y) \trr \behh(z)$.
\end{enumerate}
Then  $(H,\alhh,\behh)$ is called a  $\si$-BiHom-algebra.

(ii): Let $\theta$ be a cocycle on a vector space $(A,\alaa,\beaa)$  equipped with a multiplication $A \ot A \to A$, satisfying the the
following cocycle BiHom-associative identity:
\begin{enumerate}
\item[(CC6)] $\alaa(a)(b c)+\alaa(a)\ppl \theta(b, c)=(a b) \beaa(c)+\theta(a b) \ppr \beaa(c)$.
\end{enumerate}
Then  $(A,\alaa,\beaa)$ is called a   $\theta$-BiHom-algebra.

(iii) Let $P$ be a cycle on a vector space  $(H,\alhh,\behh)$ equipped with a comultiplication $\Delta: H \to H \ot H$, satisfying the the
following cycle BiHom-coassociative identity:
\begin{enumerate}
\item[(CC7)] $\alhh(x_1)\ot \Delta_H(x_2)+\alhh(x\boo)\ot P(x\bi)=\Delta_H(x\li)\ot \behh(x\lii)+ P(x\boi) \ot \behh(x\boo)$,
\end{enumerate}
\noindent Then  $(H,\alhh,\behh)$ is called a  $P$-BiHom-coalgebra.

(iv) Let $Q$ be a cycle on a vector space  $(A,\alaa,\beaa)$ equipped with an commutativity  map $\Delta: A \to A \ot A$, satisfying the the
following cycle BiHom-coassociative identity:
\begin{enumerate}
\item[(CC8)] $\alaa(a\li)\ot \Delta_A(a\lii)+\alaa(a\mo)\ot Q(a\mi)=\Delta_A(a\li)\ot \beaa(a\lii)+Q(a\lmoi)\ot \beaa(a\lmoo)$,
\end{enumerate}
\noindent Then  $(A,\alaa,\beaa)$ is called a  $Q$-BiHom-coalgebra.
\end{definition}

\begin{definition}
A  \emph{cocycle cross product system } is a pair of  $\theta$-BiHom-algebra $(A,\alaa,\beaa)$ and $\sigma$-BiHom-algebra $(H,\alhh,\behh)$,
where $\si: H\ot H\to A$ is a cocycle on $H$, $\theta: A\ot A\to H$ is a cocycle on $A$ and the following conditions are satisfied:
\begin{enumerate}
\item[(CP1)] $(ab) \trr \behh(x)+\theta(a, b) \behh(x)=\alaa(a)\trr (b\trr x)+\theta(\alaa(a), b\ppl x)$,
\item[(CP2)] $\alhh(x) \trl (a b)+\alhh(x)\theta(a, b)=(x \trl a) \trl \beaa(b)+\theta(x \ppr a, \beaa(b))$,
\item[(CP3)] $(a \trr x) \trl \beaa(b)+\theta(a\ppl x, \beaa(b))=\alaa(a) \trr(x \trl b)+\theta(\alaa(a), x\ppl b)$,
\item[(CP4)] $ \alaa(a) \trr (x y)+\theta(\alaa(a), \sigma(x, y))=(a \trr x) \behh(y)+(a\ppl x) \trr \behh(y)$,
\item[(CP5)] $(x y) \trl \beaa(a)+\theta(\sigma(x, y), \beaa(a))=\alhh(x) \trl(y \ppr a)+\alhh(x)(y \trl a)$,
\item[(CP6)] $(x \triangleleft a) \behh(y)+(x \ppr a) \trr \behh(y)=\alhh(x)(a \trr y)+\alhh(x) \trl(a\ppl y)$,
\item[(CP7)] $(x y) \ppr \beaa(a)+\sigma(x, y) \beaa(a)=\alhh(x)\ppr(y\ppr a)+\sigma(\alhh(x), y\trl a)$,
\item[(CP8)] $\alaa(a)\ppl (x y)+\alaa(a) \sigma(x, y)=(a\ppl x)\ppl \behh(y)+\sigma(a \trr x, \behh(y))$,
\item[(CP9)] $(x \ppr a)\ppl \behh(y)+\sigma(x \trr a, \behh(y))=\alhh(x) \ppr(a\ppl y)+\sigma(\alhh(x), a\trr y)$,
\item[(CP10)] $\alhh(x) \ppr(a b)+\sigma(\alhh(x), \theta(a, b))=(x\ppr a) \beaa(b)+(x\trl a) \ppr \beaa(b)$,
\item[(CP11)] $(ab)\ppl \behh(x)+\sigma(\theta(a, b), \behh(x))=\alaa(a)(b\ppl x)+\alaa(a)\ppl (b \trr x)$,
\item[(CP12)] $(a\ppl x) \beaa(b)+(a\trr x) \ppr \beaa(b)=\alaa(a)(x \ppr b)+\alaa(a)\ppl (x\trl b)$.
\end{enumerate}
\end{definition}

\begin{lemma}
Let $(A, H)$ be  a  cocycle cross product system.
If we define $D=A_\sigma\#_\theta H$ as the vector space $A\oplus H$ with the   multiplication
\begin{align*}
&{\alpha_E}(a, x):=(\alpha_A(a), \alpha_H(x)),\quad{\beta_E}(a, x):=(\beta_A(a), \beta_H(x)),\\
&(a, x)(b, y)=\big(ab+x\ppr b+a\ppl y+\sigma(x, y), \, xy+x\trl b+a\trr y+\theta(a, b)\big),
\end{align*}
then  $A_\sigma\#_\theta H$ forms an BiHom-algebra which is called the cocycle cross product BiHom-algebra.
\end{lemma}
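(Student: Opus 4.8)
The plan is to verify associativity of the multiplication on $D = A\oplus H$ directly, by expanding $\bigl((a,x)(b,y)\bigr)(c,z)$ and $(a,x)\bigl((b,y)(c,z)\bigr)$ and matching the twelve axioms (CP1)--(CP12) together with the cocycle associativity identities (CC5), (CC6) and the $\sigma$- and $\theta$-algebra structures. The heart of the argument is organizational rather than conceptual: both sides land in $A\oplus H$, so I would split the computation into its $A$-component and its $H$-component, and then within each component sort the resulting terms according to which of the "input slots" $a,b,c,x,y,z$ they involve (e.g. terms purely in $A$, terms of the form $x\ppr(\cdot)$, terms involving $\sigma$, terms involving $\theta$, mixed terms, etc.).

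First I would write out the product $(a,x)(b,y)=\bigl(ab+x\ppr b+a\ppl y+\sigma(x,y),\ xy+x\trl b+a\trr y+\theta(a,b)\bigr)$ and then substitute this into the outer product with $(c,z)$. In the $A$-slot of $\bigl((a,x)(b,y)\bigr)(c,z)$ one collects: $(ab)c$ and the analogous purely-$A$ associativity piece, which needs (CC6) for the $\theta$-algebra $A$; the terms $(x\ppr b)c$, $(a\ppl y)c$, $(xy)\ppr c$, $\sigma(x,y)c$, $\sigma(xy+x\trl b+a\trr y+\theta(a,b),\,z)$ coming from the cocycle $\sigma$ evaluated on the $H$-component; plus $(ab+x\ppr b+a\ppl y+\sigma(x,y))\ppl z$. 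On the other side, in the $A$-slot of $(a,x)\bigl((b,y)(c,z)\bigr)$ one gets $a(bc)$, the mirror-image $\theta$-terms, $x\ppr(bc+y\ppr c+b\ppl z+\sigma(y,z))$, $a\ppl(yz+y\trl c+b\trr z+\theta(b,c))$, and $\sigma(x,\,yz+y\trl c+b\trr z+\theta(b,c))$. Matching these two expansions term by term reproduces exactly (CP7)--(CP12) and (CP1)--(CP6) insofar as they output into $A$, together with (CC6), (CC1), (CC2). I would then do the symmetric bookkeeping for the $H$-slot, which by the left--right symmetry of the axiom list produces the remaining halves of (CP1)--(CP6) and (CP7)--(CP12), together with (CC5), (CC1), (CC2). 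The bilinearity and $\lambda$-independence here mean no sign subtleties arise.

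The main obstacle is purely combinatorial: each slot produces on the order of fifteen to twenty terms on each side, and one must be careful that every term is assigned to exactly one compatibility condition and that no term is double-counted or dropped — in particular the terms where a structure map is applied to a sum (such as $\sigma(x\trl b + a\trr y + \cdots,\,z)$, which must be expanded by bilinearity before matching). I would organize the verification into a short lemma-style table: list on the left the terms of $\bigl((a,x)(b,y)\bigr)(c,z)$ grouped by type, on the right those of $(a,x)\bigl((b,y)(c,z)\bigr)$, and indicate for each matched pair which axiom (CP$k$) or (CC$k$) supplies the equality. Once the table is complete the equality $\bigl((a,x)(b,y)\bigr)(c,z)=(a,x)\bigl((b,y)(c,z)\bigr)$ follows immediately, and since the product is manifestly bilinear this shows $A_\sigma\#_\theta H$ is an (associative) algebra, completing the proof.
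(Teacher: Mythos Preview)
Your proposal is correct and follows essentially the same route as the paper: the paper's proof simply expands $\bigl((a,x)(b,y)\bigr)(c,z)$ and $(a,x)\bigl((b,y)(c,z)\bigr)$ in full and observes that equality holds if and only if the list (CP1)--(CP12), together with the cocycle and cocycle-associativity identities (CC1), (CC2), (CC5), (CC6) built into the definition of a cocycle cross product system, is satisfied. Your only minor slip is in the slot-by-slot attribution of axioms: (CP1)--(CP6) and (CC2), (CC5) all output in $H$, while (CP7)--(CP12) and (CC1), (CC6) output in $A$, so the $A$-component bookkeeping uses precisely the latter group and the $H$-component the former --- but this does not affect the argument.
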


\begin{proof} We have to check $((a, x)(b, y)){\beta_E}(c, z)={\alpha_E}(a, x)((b, y)(c, z))$. By direct computations, the left hand side is equal to
\begin{eqnarray*}
&&{\alpha_E}(a, x)\big((b, y)(c, z)\big)\\
&=&\left(\alaa(a), \alhh(x)\right)\big(b c+y\ppr c+b\ppl z+\sigma(y, z), y z+y \trl c+b \trr z+\theta(b, c)\big)\\
&=&\Big(\alaa(a)(b c)+\alaa(a)(y \ppr c)+\alaa(a)(b\ppl z)+\alaa(a)(\sigma(y, z))+\alhh(x) \ppr(b c)\\
&&+\alhh(x) \ppr(y\ppr c)+\alhh(x) \ppr(b\ppl z)+\alhh(x) \ppr \sigma(y, z)\\
&&+\alaa(a)\ppl (y z)+\alaa(a)\ppl (y \trl c)+\alaa(a)\ppl (b \trr z)+\alaa(a)\ppl \theta(b, c)\\
&&+\sigma(\alhh(x), y z)+\sigma(\alhh(x), y \trl c)+\sigma(\alhh(x), b \trr z)+\sigma(\alhh(x), \theta(b, c)),\\
&&\quad \alhh(x)(y z)+\alhh(x)(y \trr c)+\alhh(x)(b \trr z)+\alhh(x) \theta(b, c)+\alhh(x) \trl (b c)\\
&&+\alhh(x) \trl(y \ppr c)+\alhh(x)\trl (b\ppl z)+\alhh(x) \trl \sigma(y, z)\\
&&+\alaa(a) \trr(y z)+\alaa(a) \trr(y\trl c)+\alaa(a) \trr(b \trr z)+\alaa(a) \trr \theta(b, c)\\
&&+\theta(\alaa(a), b c)+\theta(\alaa(a), y \ppr c)+\theta(\alaa(a), b\ppl z)+\theta(\alaa(a), \sigma(y, z))\Big)
\end{eqnarray*}
and the right hand side is equal to
\begin{eqnarray*}
&&\big((a, x)(b, y)\big){\beta_E}(c, z)\\
&=&\big(a b+x \ppr b+a\ppl y+\sigma(x, y), x y+x \trl b+a\trr y+\theta(a, b)\big)(\beaa(c), \behh(z))\\
&=&\Big( a b) \beaa(c)+(x\ppr b) \beaa(c)+(a\ppl y) \beaa(c)+\sigma(x, y) \beaa(c)+(x y) \ppr \beaa(c) \\
&&+(x \trl b) \ppr \beaa(c)+(a \trr y)\ppr \beaa(c)+\theta(a, b) \ppr \beaa(c)\\
&&+(a b) \ppl \behh(z)+(x \ppr b)\ppl \behh(z)+(a\ppl y)\ppl \behh(z)+\sigma(x y)\ppl \behh(z)\\
&&+\sigma(x y, \behh(z))+\sigma(x \trl b, \behh(z))+\sigma(a \trr y, \behh(z)) +\sigma(\theta(a, b), \behh(z)),\\
&&\quad (x y) \behh(z)+(x\trl b) \behh(z)+(a\trr y) \behh(z)+\theta(a, b) \behh(z)+(x y) \trl \beaa(c)\\
&&+(x \trl b) \trl \beaa(c)+(a \trr y) \trl \beaa(c)+\theta(a, b) \trl\beaa(c) \\
&&+(a b) \trr \behh(z)+(x \ppr b) \trr \behh(z)+(a\ppl y)\trr \behh(z)+\sigma(x, y) \trr \behh(z)\\
&&+\theta(a b, \beaa(c))+\theta(x \ppr b, \beaa(c))+\theta(a\ppl y, \beaa(c))+\theta(\sigma(x, y), \beaa(c))\Big).
\end{eqnarray*}
Thus the two sides are equal to each other if and only if (CP1)--(CP12) hold.
\end{proof}

\begin{definition}
A  \emph{cycle cross coproduct system } is a pair of   $P$-BiHom-coalgebra $(A,\alaa,\beaa)$ and  $Q$-BiHom-coalgebra $(H,\alhh,\behh)$ ,  where $P: A\to H\ot H$ is a cycle on $A$,  $Q: H\to A\ot A$ is a cycle over $(H,\alhh,\behh)$ such that following conditions are satisfied:
\begin{enumerate}
\item[(CCP1)] $\alhh(a_{(-1)}) \otimes \Delta_{A}\left(a_{(0)}\right)+\alhh(a\ppi) \otimes Q\left(a\pii\right)=\phi\left(a_{1}\right) \otimes \beaa(a_{2})+\gamma\left(a_{(-1)}\right) \otimes \beaa(a_{(0)})$,
\item[(CCP2)] $\Delta_{A}\left(a_{(0)}\right) \otimes \behh(a_{(1)})+Q\left(a\ppi\right) \otimes \behh(a\pii)=\alaa(a_{1}) \otimes \psi\left(a_{2}\right)+\alaa(a_{(0)}) \otimes \rho\left(a_{(1)}\right)$,
\item[(CCP3)] $\alaa(x_{[-1]}) \otimes \Delta_{H}\left(x_{[0]}\right)+\alaa(x\qi) \otimes P\left(x\qii\right)=\rho\left(x_{1}\right) \otimes \behh(x_{2})+\psi\left(x_{[-1]}\right) \otimes \behh(x_{[0]})$,
\item[(CCP4)] $\Delta_{H}(x\boo)\ot \beaa(x\bi)+P(x\qi)\ot \beaa(x\qii)=\alhh(x\boo)\ot \phi(x\bi)+\alhh(x_1)\ot \gamma(x_2)$,
\item[(CCP5)] $\alaa(a_{1}) \otimes \phi\left(a_{2}\right)+\alaa(a_{(0)}) \otimes \gamma\left(a_{(1)}\right)=\psi\left(a_{1}\right) \otimes \beaa(a_{2})+\rho\left(a_{(-1)}\right) \otimes \beaa(a_{(0)})$,
\item[(CCP6)] $\alhh(x_{1}) \otimes \rho\left(x_{2}\right)+\alhh(x_{[0]}) \otimes \psi\left(x_{[1]}\right)=\phi\left(x_{[-1]}\right) \otimes \behh(x_{[0]})+\gamma\left(x_{1}\right) \otimes \behh(x_{2})$,
\item[(CCP7)] $\alhh(a_{(-1)}) \otimes \phi\left(a_{(0)}\right)+\alhh(a\ppi)\otimes \gamma\left(a\pii\right)=\Delta_{H}\left(a_{(-1)}\right) \otimes \beaa(a_{(0)})+P\left(a_{1}\right) \otimes \beaa(a_{2})$,
\item[(CCP8)] $\alaa(a_{(0)})\ot \Delta_{H}\left(a_{(1)}\right)+\alaa(a_{1})\otimes P\left(a_{2}\right)=\psi\left(a_{(0)}\right) \otimes \behh(a_{(1)})+\rho\left(a\ppi\right) \otimes \behh(a\pii)$,
\item[(CCP9)] $\alhh(a_{(-1)}) \otimes \psi\left(a_{(0)}\right)+\alhh(a\ppi) \otimes \rho\left(a\pii\right)
=\phi\left(a_{(0)}\right) \otimes \behh(a_{(1)})+\gamma\left(a\ppi\right) \otimes \behh(a\pii)$,
\item[(CCP10)] $\alaa(x_{[-1]}) \otimes \rho\left(x_{[0]}\right)+\alaa(x\qi) \otimes \psi\left(x\qii\right)=\Delta_{A}\left(x_{[-1]}\right) \otimes \behh(x_{[0]})+Q\left(x_{1}\right) \otimes \behh(x_{2})$,
\item[(CCP11)] $\alhh(x\boo)\ot\Delta_A(x\bi)+\alhh(x_1)\ot Q(x_2)=\gamma(x\boo)\ot \beaa(x\bi)+\phi(x\qi)\ot \beaa(x\qii)$,
\item[(CCP12)] $\alaa(x\boi)\ot\gamma(x\boo)+\alaa(x\qi)\ot\phi(x\qii)=\rho(x\boo)\ot \beaa(x\bi)+\psi(x\qi)\ot \beaa(x\qii)$.
\end{enumerate}
\end{definition}

\begin{lemma}\label{lem2} Let $(A, H)$ be  a  cycle cross coproduct system. If we define $D=A^{P}\# {}^{Q} H$ as the vector
space $A\oplus H$ with the   comultiplication
$$\Delta_{E}(a,x)=(\Delta_{A}+\phi+\psi+P)(a)+(\Delta_{H}+\psi+\gamma+Q)(x), $$
that is
$$\Delta_{E}(a)= a\li \ot a\lii+ a\moi \ot a\mo+a\mo\ot a\mi+a\ppi\ot a\pii,$$
$$\Delta_{E}(x)= x\li \ot x\lii+ x\boi \ot x\boo+x\boo \ot x\bi+x\qi\ot x\qii,$$
then  $A^{P}\# {}^{Q} H$ forms a BiHom-coalgebra which we will call it the cycle cross coproduct   BiHom-coalgebra.
\end{lemma}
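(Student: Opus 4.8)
The plan is to verify coassociativity of $\Delta_E$ on $D = A\oplus H$ by expanding $(\id\otimes\Delta_E)\Delta_E$ and $(\Delta_E\otimes\id)\Delta_E$ on a generic element and matching terms. Since $D$ splits as $A\oplus H$ and $\Delta_E$ is defined separately on the two summands, it suffices to check the identity separately on $a\in A$ and on $x\in H$; by the evident symmetry between the roles of $(A,\Delta_A,\phi,\psi,P)$ and $(H,\Delta_H,\rho,\gamma,Q)$ in Definition of a cycle cross coproduct system, the computation for $x\in H$ is obtained from the one for $a\in A$ by interchanging $A\leftrightarrow H$, $\Delta_A\leftrightarrow\Delta_H$, $\phi\leftrightarrow\rho$, $\psi\leftrightarrow\gamma$, $P\leftrightarrow Q$, and the index conventions ${}_{(\pm1)}\leftrightarrow{}_{[\pm1]}$, ${}_{<i>}\leftrightarrow{}_{\{i\}}$. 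So I would write out only the $a\in A$ case in detail.

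First I would apply $\Delta_E$ to each of the four tensor legs of $\Delta_E(a) = a_1\ot a_2 + a_{(-1)}\ot a_{(0)} + a_{(0)}\ot a_{(1)} + a_{<1>}\ot a_{<2>}$. Applying $\id\ot\Delta_E$: the first leg stays in $A$ so $\Delta_E$ acts via $\Delta_A+\phi+\psi+P$; the second leg of the first summand is in $A$, of the second summand in $A$, of the third in $H$ (so $\Delta_E$ acts via $\Delta_H+\rho+\gamma+Q$), and of the fourth in $H$. This produces $16$ terms. Symmetrically, $\Delta_E\ot\id$ produces $16$ terms, where now $\Delta_E$ is applied to the \emph{left} leg of each summand. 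Then I would group the resulting $32$ terms by their "type", i.e.\ by which of the spaces $A\ot A\ot A$, $A\ot A\ot H$, $A\ot H\ot A$, $H\ot A\ot A$, $A\ot H\ot H$, $H\ot A\ot H$, $H\ot H\ot A$, $H\ot H\ot H$ they live in. Matching within each type, one sees that the coassociativity of $\Delta_A$ together with the bicomodule axioms for $(A,\phi,\psi)$ handle the "pure" $A$-terms exactly as in Lemma~\ref{lem1}, while the genuinely new terms (those involving $P$) are absorbed precisely by the conditions (CC3), (CC7), and (CCP1)--(CCP12). Concretely I expect: the $H\ot A\ot A$ and $A\ot A\ot H$ and $A\ot H\ot A$ type equalities to reproduce (CCP1), (CCP2), (CCP5) and the matched-pair coalgebra conditions; the equalities with two $H$-legs to reproduce (CCP7)--(CCP9) and their analogues; the all-$A$ terms with two $P$'s to give (CC3); and the single-summand cross terms to give (CC7) and (CCP11), (CCP12).

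The main obstacle, and the only place requiring genuine care, is bookkeeping: there are thirty-two terms in eight different tensor-factor types, several of the defining conditions (CCP$k$) mix a $\Delta$-term, a comodule-structure term, and a cycle ($P$ or $Q$) term on the same line, and the Sweedler-type indices $a_{(-1)}\ot a_{(0)}$, $a_{(0)}\ot a_{(1)}$, $a_{<1>}\ot a_{<2>}$ must be tracked without collision. I would organize this by first recording, in a short table, which summand of $\Delta_E(a)$ lands in which tensor-factor type after the second application, and then reading off the required identity in each type. Once the correspondence "type of term $\leftrightarrow$ axiom" is displayed, the proof reduces to the observation that the listed axioms are \emph{exactly} the full set of such identities — nothing more is needed and nothing is redundant — so $D = A^{P}\#{}^{Q}H$ is coassociative if and only if $A$ is a $P$-coalgebra, $H$ is a $Q$-coalgebra, and (CCP1)--(CCP12) hold, which is precisely the hypothesis that $(A,H)$ is a cycle cross coproduct system. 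I would close by remarking that when $P=0$ and $Q=0$ this recovers Lemma~\ref{lem1}.
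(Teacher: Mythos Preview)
Your approach is essentially the paper's own: both expand $(\id\otimes\Delta_E)\Delta_E$ and $(\Delta_E\otimes\id)\Delta_E$ into their sixteen-plus-sixteen summands and match term by term. Your added organization --- grouping by the eight tensor types in $(A\oplus H)^{\otimes 3}$ and invoking the $A\leftrightarrow H$ symmetry to treat only the case $a\in A$ --- is a genuine improvement in presentation over the paper, which simply lists all thirty-two terms for a generic $(a,x)$ and asserts the match.

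One small slip to fix when you write it out: you say that ``the coassociativity of $\Delta_A$ together with the bicomodule axioms for $(A,\phi,\psi)$ handle the pure $A$-terms exactly as in Lemma~\ref{lem1}''. This is not quite right. When you apply $\Delta_E$ to the $H$-valued legs $a_{(1)}$ and $a_{<2>}$, the map $Q:H\to A\otimes A$ contributes terms of type $A\otimes A\otimes A$, so the identity in that type is not plain coassociativity of $\Delta_A$ but rather
\[
a_1\otimes\Delta_A(a_2)+a_{(0)}\otimes Q(a_{(1)})=\Delta_A(a_1)\otimes a_2+Q(a_{(-1)})\otimes a_{(0)},
\]
which is exactly the $Q$-coalgebra condition (CC8) on $A$. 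Likewise the $H\otimes H\otimes H$ type for $a\in A$ gives (CC3), and for $x\in H$ the analogous two types give (CC7) and (CC4). So the full list of conditions recovered is (CC3), (CC4), (CC7), (CC8), and (CCP1)--(CCP12), which is precisely the data of a cycle cross coproduct system. Your closing sentence should therefore read ``$A$ is a $Q$-coalgebra, $H$ is a $P$-coalgebra'' rather than the reverse, and you should add (CC4), (CC8) to the conditions you name explicitly. With that correction your plan goes through.
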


\begin{proof} We have to check $ ({\alpha_E}\otimes \Delta_E ) \Delta_E(a, x)= (\Delta_E \otimes {\beta_E}) \Delta_E(a, x)$. By direct computations, the left hand side is equal to
\begin{eqnarray*}
&& ({\alpha_E}\otimes \Delta_E ) \Delta_E(a, x)\\
&=&\alaa(a_{1}) \otimes \Delta_{A}\left(a_{2}\right)+\alaa(a_{1}) \otimes \phi\left(a_{2}\right)+\alaa(a_{1}) \otimes \psi\left(a_{2}\right)\\
&&+\alaa(a_{1}) \otimes P\left(a_{2}\right) +\alhh(a_{(-1)}) \otimes \Delta_{A}\left(a_{(0)}\right)+\alhh(a_{(-1)}) \otimes \phi\left(a_{(0)}\right)\\
&&+\alhh(a_{(-1)}) \otimes \psi\left(a_{(0)}\right)+\alhh(a_{(-1)}) \otimes P\left(a_{(0)}\right)+\alaa(a_{(0)}) \otimes \Delta_{H}\left(a_{(1)}\right)\\
&&+\alaa(a_{(0)}) \otimes \rho\left(a_{(1)}\right)+\alaa(a_{(0)}) \otimes \gamma\left(a_{(1)}\right)+\alaa(a_{(0)}) \otimes Q\left(a_{(1)}\right)\\
&&+\alhh(a\ppi) \otimes \Delta_{H}\left(a\pii\right)+\alhh(a\ppi) \otimes \rho\left(a\pii\right)\\
&&+\alhh(a\ppi) \otimes \gamma\left(a\pii\right)+\alhh(a\ppi)\otimes Q\left(a\pii\right)\\
&&+\alhh(x_{1}) \otimes \Delta_{H}\left(x_{2}\right)+\alhh(x_{1}) \otimes \rho\left(x_{2}\right)+\alhh(x_{1}) \otimes \gamma\left(x_{2}\right)\\
&&+\alhh(x_{1}) \otimes Q\left(x_{2}\right)+\alaa(x\boi) \otimes \Delta_{H}\left(x\boo\right)+\alaa(x\boi)\otimes \rho\left(x\boo\right)\\
&&+\alaa(x\boi) \otimes \gamma\left(x\boo\right)+\alaa(x\boi)\otimes Q\left(x\boo\right)+\alhh(x\boo) \otimes \Delta_{A}\left(x\bi\right)\\
&&+\alhh(x\boo) \otimes \phi\left(x\bi\right)+\alhh(x\boo)\otimes \psi\left(x\bi\right)+\alhh(x\boo) \otimes P\left(x\bi\right)\\
&&+\alaa(x\qi) \otimes \Delta_{A}\left(x\qii\right)+\alaa(x\qi) \otimes \phi\left(x\qii\right)\\
&&+\alaa(x\qi) \otimes \psi\left(x\qii\right)+\alaa(x\qi)\otimes P\left(x\qii\right)
\end{eqnarray*}
and the right hand side is equal to
\begin{eqnarray*}
&& (\Delta \otimes {\beta_E}) \Delta_E(a, x)\\
&=&\Delta_{A}\left(a_{1}\right) \otimes \beaa(a_{2})+\phi\left(a_{1}\right) \otimes \beaa(a_{2})+\psi\left(a_{1}\right) \otimes \beaa(a_{2})\\
&&+P\left(a_{1}\right) \otimes \beaa(a_{2})+\Delta_{H}\left(a_{(-1)}\right) \otimes \beaa(a_{(0)})+\rho\left(a_{(-1)}\right) \otimes \beaa(a_{(0)})\\
&&+\gamma\left(a_{(-1)}\right) \otimes \beaa(a_{(0)})+Q\left(a_{(-1)}\right) \otimes \beaa(a_{(0)})\\
&&+\Delta_{A}\left(a_{(0)}\right) \otimes \behh(a_{(1)})+\phi\left(a_{(0)}\right) \otimes \behh(a_{(1)})\\
&&+\psi\left(a_{(0)}\right) \otimes \behh(a_{(1)})+P\left(a_{(0)}\right) \otimes \behh(a_{(1)})\\
&&+\Delta_{H}\left(a\ppi\right) \otimes \behh(a\pii)+\rho\left(a\ppi\right) \otimes \behh(a\pii)\\
&&+\gamma\left(a\ppi\right) \otimes \behh(a\pii)+Q\left(a\ppi\right) \otimes \behh(a\pii)\\
&&+\Delta_{H}\left(x_{1}\right) \ot \behh(x_{2})+\rho\left(x_{1}\right) \otimes \behh(x_{2})\\
&&+\gamma\left(x_{1}\right) \otimes \behh(x_{2})+Q\left(x_{1}\right) \otimes \behh(x_{2})\\
&&+\Delta_{A}\left(x\boi\right) \otimes \behh(x\boo)+ \phi\left(x\boi\right) \otimes \behh(x\boo)\\
&&+\psi\left(x\boi\right)\otimes \behh(x\boo)+P\left(x\boi\right)\otimes \behh(x\boo)\\
&&+\Delta_{H}\left(x\boo\right) \otimes \beaa(x\bi)+ \rho\left(x\boo\right) \otimes \beaa(x\bi)\\
&&+\gamma\left(x\boo\right)\otimes \beaa(x\bi)+Q\left(x\boo\right)\otimes \beaa(x\bi)\\
&&+\Delta_{A}\left(x\qi\right) \otimes \beaa(x\qii)+\phi\left(x\qi\right) \otimes \beaa(x\qii)\\
&&+\psi\left(x\qi\right) \otimes \beaa(x\qii)+P\left(x\qi\right) \otimes \beaa(x\qii).
\end{eqnarray*}
Thus the two sides are equal to each other if and only if (CCP1)--(CCP12) hold.
\end{proof}

\begin{definition}
Let $A, H$ be both   BiHom-algebras and   BiHom-coalgebras. If  the following conditions hold:
\begin{enumerate}
\item[(CDM1)]  $\phi(a b)+\gamma(\theta(a, b))$ \\
$=\alhh(a_{(-1)}) \otimes\left(a_{(0)} \beaa(b)\right)+\alhh(a\ppi) \otimes (a\pii \ppr \beaa(b))\\
    =(\alaa(a) \trr b_{(-1)}) \otimes \beaa(b_{(0)})+\theta\left(\alaa(a), b_{1}\right) \otimes \beaa(b_{2})$,
\item[(CDM2)] $\psi(a b)+\rho(\theta(a, b))$ \\
$=(\alaa(a) b_{(0)}) \otimes \behh(b_{(1)})+(\alaa(a)\ppl b\ppi) \otimes \behh(b\pii)\\
  =\alaa(a_{(0)}) \otimes\left(a_{(1)} \trl \beaa(b)\right)+\alaa(a_{1}) \otimes \theta\left(a_{2},\beaa(b)\right)$,
\item[(CDM3)] $\rho(x y)+\psi(\sigma(x, y))$ \\
$=\alaa(x_{[-1]}) \otimes\left(x_{[0]} \behh(y)\right) +\alaa(x\qi)\otimes x\qii\trr \behh(y)\\
   =\left(\alhh(x) \ppr y_{[-1]}\right) \otimes \behh(y_{[0]})+\sigma\left(\alhh(x), y_{1}\right) \otimes \behh(y_{2})$,
\item[(CDM4)] $\gamma(x y)+\phi(\sigma(x, y))$ \\
$=\alhh(x_{[0]})\otimes (x_{[1]}\ppl \behh(y))+\alhh(x_{1}) \otimes \sigma\left(x_{2}, \behh(y)\right)\\
    =\alhh(x)y_{[0]}\otimes \beaa(y_{[1]})+\alhh(x)\trl y\qi\otimes \beaa(y\qii)$,
\item[(CDM5)] $\Delta_{A}(x \ppr b)+Q(x\trl  b)$ \\
$=\alaa(x_{[-1]}) \otimes\left(x_{[0]} \ppr \beaa(b)\right)+\alaa(x\qi) \otimes x\qii \beaa(b)\\
=\left(\alhh(x) \ppr b_{1}\right) \otimes \beaa(b_{2})+\sigma(\alhh(x), b_{(-1)}) \otimes \beaa(b_{(0)})$,
\item[(CDM6)] $\Delta_{A}(a\ppl y)+Q(a\trr y)$\\
$=\alaa(a_{1}) \otimes\left(a_{2} \ppl \behh(y)\right)+\alaa(a_{(0)}) \otimes \sigma\left(a_{(1)}, \behh(y)\right)\\
=\left(\alaa(a)\ppl y_{[0]}\right) \otimes \beaa(y_{[1]})+\alaa(a) y\qi \otimes \beaa(y\qii)$,
\item[(CDM7)] $\Delta_{H}(a \trr y)+P(a\ppl y)$\\
$=\alhh(a_{(-1)}) \otimes\left(a_{(0)}\trr \behh(y)\right)+\alhh(a\ppi) \otimes a\pii \behh(y)\\
=\left(\alaa(a) \trr y_{1}\right) \otimes \behh(y_{2})+\theta\left(\alaa(a), y_{[-1]}\right) \otimes \behh(y_{[0]})$,
\item[(CDM8)] $\Delta_{H}(x \trl b)+P(x\ppr b)$\\
$=\alhh(x_{1}) \otimes\left(x_{2}  \trl \beaa(b)\right)+\alhh(x_{[0]}) \otimes \theta\left(x_{[1]}, \beaa(b)\right)\\
=\left(\alhh(x)\trl b_{(0)}\right) \otimes \behh(b_{(1)})+\alhh(x) b\ppi\otimes \behh(b\pii)$,

\item[(CDM9)]$\Delta_{H}(\theta(a,b))+P(a, b)$\\
$=\alhh(a_{(-1)}) \otimes\theta(a_{(0)},\beaa(b))+a\ppi \otimes \alhh(a\pii)\trl \beaa(b)\\
=\theta(\alaa(a),b_{(0)})\otimes \behh(b_{(1)})+\alaa(a)\trr b\ppi\otimes \behh(b\pii)$,

\item[(CDM10)]$\Delta_{A}(\sigma(x,y))+Q(x, y)$\\
$=\alhh(x_{[-1]})\otimes \sigma(x_{[0]},\behh(y))+\alaa(x\qi)\otimes x\qii\ppl \behh(y)\\
=\sigma(\alhh(x),y_{[0]})\otimes \beaa(y_{[-1]})+\alhh(x)\ppr y\qi\otimes \beaa(y\qii)$,

\item[(CDM11)]
 $\phi(x \ppr b)+\gamma(x\trl b)$\\
 $=\alhh(x_{1}) \otimes\left(x_{2} \ppr\beaa( b)\right)+\alhh(x_{[0]}) \otimes x_{[1]} \beaa(b)\\
  =\alhh(x) b_{(-1)} \otimes \beaa(b_{(0)})+\left(\alhh(x)\trl b_{1}\right) \otimes \beaa(b_{2})$,

\item[(CDM12)]
$\psi(a\ppl y)+\rho(a \trr y)$\\
$=\alaa(a_{(0)}) \otimes a_{(1)} \behh(y)+\alaa(a_{1}) \otimes\left(a_{2} \trr \behh(y)\right)$\\
$=\left(\alaa(a)\ppl y_{1}\right) \otimes \behh(y_{2})+\alaa(a) y_{[-1]} \otimes \behh(y_{[0]})$,

\item[(CDM13)]
$\psi(x \ppr b)+\rho(x\trl b)$\\
$=(\alhh(x) \ppr b_{(0)}) \otimes \behh(b_{(1)})+\alaa(x\qi)\otimes\theta(x\qii,\beaa(b))$\\
$=\alaa(x_{[-1]}) \otimes\left(x_{[0]} \trl \beaa(b)\right)+\sigma(\alhh(x),b_{<1>})\otimes\behh(b_{<2>})$,

\item[(CDM14)]
$\phi(a\ppl y)+\gamma(a \trr y)$\\
$=\alhh(a_{(-1)}) \otimes\left(a_{(0)}\ppl \behh(y)\right)+\alhh(a_{<1>})\otimes\sigma(a_{<2>,\behh(y)})$\\
$=\left(\alaa(a)\trr y_{[0]}\right)\otimes \beaa(y_{[1]})+\theta(\alaa(a),y\qi)\otimes \beaa(y\qii)$.
\end{enumerate}
\noindent then $(A, H)$ is called a \emph{cocycle double matched pair}.
\end{definition}

\begin{definition}\label{bi-cycle1}
(i) A \emph{cocycle braided BiHom-Frobenius algebra} $(H,\alhh,\behh)$ is simultaneously a cocycle BiHom-algebra and a cycle BiHom-coalgebra satisfying the condition
\begin{eqnarray}
\notag\Delta_{H}(x y)+P\sigma(x,y)&=&\alhh(x_{1}) \otimes x_{2} \behh(y)+\alhh(x_{[0]}) \otimes\left(x_{[1]} \trr \behh(y)\right)\\
&=&\alhh(x) y_{1} \otimes \behh(y_{2})+\left(\alhh(x) \trl y_{[-1]}\right) \otimes \behh(y_{[0]}).
\end{eqnarray}
(ii) A \emph{cocycle braided BiHom-Frobenius algebra} $(A,\alaa,\beaa)$ is simultaneously a cocycle  BiHom-algebra and  a cycle BiHom-coalgebra satisfying the  condition
\begin{eqnarray}
\notag\Delta_{A}(a b)+Q\theta(a,b)&=&\alaa(a_{1}) \otimes a_{2} \beaa(b)+\alaa(a_{(0)}) \otimes\left(a_{(1)} \ppr  \beaa(b)\right)\\
&=&\alaa(a) b_{1} \otimes \beaa(b_{2})+(\alaa(a)\ppl b_{(-1)}) \otimes \beaa(b_{(0)}).
\end{eqnarray}
\end{definition}

The next theorem means that we can obtain a BiHom-Frobenius algebra from two cocycle braided BiHom-Frobenius algebras.
\begin{theorem}\label{main2}
Let $(A, H)$ be a cocycle cross product system and a cycle cross
coproduct system. Then the cocycle cross product BiHom-algebra and
cycle cross coproduct  BiHom-coalgebra fit together to form an ordinary
BiHom-Frobenius algebra if and only if $(A,\alaa, \beaa)$, $(H,\alhh,\behh)$ are cocycle braided BiHom-Frobenius algebras  and  the conditions (CDM1)--(CDM14) are satisfied. We will call it the cocycle bicrossproduct BiHom-Frobenius algebra and denote it by $A^{P}_{\sigma}\# {}^{Q}_{\theta}H$.
\end{theorem}

\begin{proof}  We only need to check the compatibility condition
$$\Delta_E((a, x) (b, y))=({\alpha_E} \otimes \mu_E)(\Delta_E(a, x) \cdot{\beta_E}(b, y))=(\mu_E \otimes {\beta_E})({\alpha_E}(a, x) \cdot \Delta(b, y)).$$
The left hand side is equal to
\begin{eqnarray*}
&&\Delta_E((a, x) (b, y))\\
&=&\Delta_E(ab+x \ppr b+a\ppl y+\sigma(x, y), x y+x \trl b+a\trr y+\theta(a, b))\\
&=&\Delta_A(a b)+\phi(a b)+\psi(a b)+\Delta_A(x \ppr b)+\phi(x \ppr b)+\psi(x \ppr b)\\
&&+\Delta_{A}(a\ppl y)+\phi(a\ppl y)+\psi(a\ppl y)+\Delta_{H}(x y)+\rho(x y)+\gamma(x y)\\
&&+\Delta_{H}(x \trl b)+\rho(x \trl b)+\gamma(x \trl b)+\Delta_{H}(a \trr y)+\rho(a \trr y)+\gamma(a \trr y),
\end{eqnarray*}
the middle and right hand sides are equal to
\begin{eqnarray*}
&&({\alpha_E} \otimes \mu_E)(\Delta_E(a, x) \cdot{\beta_E}(b, y))\\
&=&\alaa(a_{1}) \otimes a_{2} \beaa(b)+\alaa(a_{1}) \otimes\left(a_{2}\ppl \behh(y)\right)+\alaa(a_{1}) \otimes\left(a_{2} \trr \behh(y)\right)\\
&&+\alaa(a_{1}) \otimes \theta(a_2, \beaa(b))+\alhh(a_{(-1)}) \otimes a_{(0)} \beaa(b)+\alhh(a_{(-1)}) \otimes\left(a_{(0)}\ppl \behh(y)\right)\\
&&+\alhh(a_{(-1)}) \otimes\left(a_{(0)} \trr \behh(y)\right)+\alhh(a_{(-1)}) \otimes \theta(a_{(0)}, \beaa(b))\\
&&+\alaa(a_{(0)})\otimes\left(a_{(1)} \ppr \beaa(b)\right)+\alaa(a_{(0)}) \otimes\left(a_{(1)} \trl \beaa(b)\right)\\
&&+\alaa(a_{(0)}) \otimes a_{(1)}\behh(y)+\alaa(a_{(0)}) \otimes \sigma(a_{(1)}, \behh(y))\\
&&+\alhh(a\ppi) \otimes \left(a\pii \ppr\beaa( b)\right)+\alhh(a\ppi) \ot \sigma\left(a\pii, \behh(y)\right)\\
&&+\alhh(a\ppi)\otimes a\pii\behh(y)+\alhh(a\ppi) \otimes \left(a\pii\trl  \beaa(b)\right)\\
&&+\alhh(x_{1}) \otimes\left(x_{2}\ppr \beaa(b)\right)+\alhh(x_{1}) \otimes\left(x_{2} \trl \beaa(b)\right)\\
&&+\alhh(x_{1}) \otimes x_{2} \behh(y) +\alhh(x_{1}) \otimes \sigma(x_{2}, \behh(y))+\alaa(x\boi)\ot(x\boo\ppr \beaa(b))\\
&&+\alaa(x\boi)\ot(x\boo\trl \beaa(b))+\alaa(x\boi)\ot x\boo \behh(y)+\alaa(x\boi)\ot \sigma(x\boo, \behh(y))\\
&&+\alhh(x\boo)\ot x\bi \beaa(b)+\alhh(x\boo)\ot (x\bi\ppl \behh(y))+\alhh(x\boo)\ot (x\bi\trr \behh(y))\\
&&+\alhh(x\boo)\ot \theta(x\bi, \beaa(b))+\alaa(x\qi)\otimes x\qii \beaa(b)+\alaa(x\qi)\otimes \left(x\qii \ppl \behh(y)\right)\\
&&+\alaa(x\qi)\otimes \left(x\qii \trr \behh(y)\right)+\alaa(x\qi) \otimes \theta\left(x\qii, \beaa(b)\right),
\end{eqnarray*}
\begin{eqnarray*}
&&(\mu_E \otimes {\beta_E})({\alpha_E}(a, x) \cdot \Delta(b, y))\\
&=&\alaa(a) b_{1} \otimes \beaa(b_{2})+\left(\alhh(x) \ppr b_{1}\right) \otimes \beaa(b_{2})+\left(\alhh(x) \trl b_{1}\right) \otimes \beaa(b_{2})\\
&&+\theta(\alaa(a), b_{1})\otimes \beaa(b_{2})+(\alaa(a)\ppl b_{(-1)}) \otimes \beaa(b_{(0)})+(\alaa(a) \trr b_{(-1)}) \otimes \beaa(b_{(0)})\\
&&+\alhh(x) b_{(-1)} \otimes \beaa(b_{(0)})+\sigma(\alhh(x), b_{(-1)})\otimes \beaa(b_{(0)})+\alaa(a) b_{(0)} \otimes \behh(b_{(1)})\\
&&+(\alhh(x) \ppr b_{(0)}) \otimes \behh(b_{(1)})+(\alhh(x) \trl b_{(0)}) \otimes \behh(b_{(1)})\\
&&+\theta(\alaa(a), b_{(0)}) \otimes \behh(b_{(1)})+(\alaa(a)\ppl b\ppi) \otimes \behh(b\pii)\\
&&+(\alaa(a) \trr b\ppi) \otimes \behh(b\pii)+\alhh(x) b\ppi \otimes \behh(b\pii)\\
   &&+\sigma(\alhh(x), b\ppi)\otimes \behh(b\pii)+\left(\alaa(a)\ppl y_{1}\right) \otimes \behh(y_{2})+\left(\alaa(a) \trr y_{1}\right) \otimes \behh(y_{2})\\
   &&+\alhh(x) y_{1}\ot \behh(y_{2})+\sigma(\alhh(x), y_{1})\ot \behh(y_{2})+\alaa(a)y\boi\ot \behh(y\boo)\\
   &&+(\alhh(x)\ppr y\boi)\ot \behh(y\boo)+(\alhh(x)\trl y\boi)\ot \behh(y\boo)\\
   &&+\theta(\alaa(a),y\boi)\ot \behh(y\boo)+(\alaa(a)\ppl y\boo)\ot \beaa(y\bi)\\
   &&+(\alaa(a)\trr y\boo)\ot \beaa(y\bi)+\alhh(x)y\boo\ot \beaa(y\bi)\\
&&+\sigma(\alhh(x), y\boo)\ot \beaa(y\bi)+(\alaa(a)y\qi)\ot \beaa(y\qii)+(\alhh(x)\ppr y\qi)\ot \beaa(y\qii)\\
&&+(\alhh(x)\trl y\qi)\ot \beaa(y\qii)+\theta(\alaa(a), y\qi)\ot \beaa(y\qii).
\end{eqnarray*}
Thus both sides are equal to each other if and only if $(A,\alaa, \beaa)$, $(H,\alhh,\behh)$ are cocycle braided BiHom-Frobenius algebras  and  the conditions (CDM1)--(CDM14) are satisfied.  The proof is completed.
\end{proof}

\section{Extending structures for BiHom-Frobenius algebras}
In this section, we will study the extending problem for  a BiHom-Frobenius algebra.

\begin{definition}
Let $({A},\cdot)$ be a given   BiHom-algebra (BiHom-coalgebra,   BiHom-bialgebra), $E$ a BiHom-vector space.
An extending system of ${A}$ through $V$ is a BiHom-algebra  (BiHom-coalgebra,   BiHom-bialgebra) on $E$
such that $V$ is a complement subspace of ${A}$ in $E$, the canonical injection map $i: A\to E, a\mapsto (a, 0)$  or the canonical projection map $p: E\to A, (a,x)\mapsto a$ is a  BiHom-algebra (BiHom-coalgebra,   BiHom-bialgebra) homomorphism.
The extending problem is to describe and classify up to an isomorphism  the set of all   BiHom-algebra  (BiHom-coalgebra,   BiHom-bialgebra) structures that can be defined on $E$.
\end{definition}

\begin{definition}
Let ${A} $ be a  BiHom-algebra (BiHom-coalgebra, BiHom-bialgebra), $E$  be a BiHom-algebra  (BiHom-coalgebra, BiHom-bialgebra) such that
${A} $ is a subspace of $E$ and $V$ a complement of
${A} $ in $E$. For a linear map $\varphi: E \to E$ we consider the diagram:
\begin{equation}\label{eq:ext1}
\xymatrix{
   0  \ar[r]^{} &A \ar[d]_{\id_A} \ar[r]^{i} & E \ar[d]_{\varphi} \ar[r]^{\pi} &V \ar[d]_{\id_V} \ar[r]^{} & 0 \\
   0 \ar[r]^{} & A \ar[r]^{i'} & {E} \ar[r]^{\pi'} & V \ar[r]^{} & 0.
   }
\end{equation}
where $\pi : E \to V$ is the canonical projection of $E ={A}  \oplus V$ onto $V$ and $i: {A}  \to E$ is the inclusion map. We say that $\varphi: E \to E$ \emph{stabilizes}
${A} $ if the left square of the diagram \eqref{eq:ext1} is  commutative.
Let $(E, \cdot)$ and $(E,\cdot')$ be two BiHom-algebra (BiHom-coalgebra,   BiHom-bialgebra) structures on $E$. $(E, \cdot)$ and $(E, \cdot')$ are called \emph{equivalent}, and we denote this by $(E, \cdot) \equiv (E, \cdot')$, if there exists a BiHom-algebra (BiHom-coalgebra,  BiHom-bialgebra) isomorphism $\varphi: (E, \cdot)
\to (E, \cdot')$ which stabilizes ${A} $. Denote by $Extd(E,{A} )$ ($CExtd(E,{A} )$, $BExtd(E,{A} )$) the set of equivalent classes of  BiHom-algebra (BiHom-coalgebra,  BiHom-bialgebra) structures on $E$.
\end{definition}

\subsection{Extending structures for BiHom-algebras and BiHom-coalgebras}
First we are going to study extending problem for BiHom-algebras and BiHom-coalgebras.

There are two cases for $A$ to be a BiHom-algebra in the cocycle cross product system defined in last section, see condition (CC6). The first case is when we let $\ppr$, $\ppl$ to be trivial and $\theta\neq 0$,  then from conditions (CP10) and (CP11) we get $\si(x, \theta(a, b))=\si(\theta(a, b), x)=0$, since $\theta\neq 0$ we assume $\sigma=0$ for simplicity, thus  we obtain the following type $(a1)$  unified product for algebras.

\begin{lemma}
Let ${A}$ be a BiHom-algebra and $V$ a BiHom-vector space. An extending datum of ${A}$ by $V$ of type (a1)  is  $\Omega^{(1)}({A},V)=(\trr,\trl, \theta)$ consisting of bilinear maps
\begin{eqnarray*}
\trr: A\otimes V \to V,\quad \trl: V\otimes A \to V,\quad\theta: A\ot A\to V, \quad\cdot_V: V\ot V\to V.
\end{eqnarray*}
Denote by $A_{\theta}\#_{}V$ the vector space $E={A}\oplus V$ together with the multiplication given by
\begin{eqnarray*}
&&{\alpha_E}(a, x):=(\alpha_A(a), \alpha_V(x)),\quad{\beta_E}(a, x):=(\beta_A(a), \beta_V(x)),\\
&&(a, x)(b, y)=\big(ab, \, xy+x\trl b+a\trr y+\theta(a, b)\big).
\end{eqnarray*}
Then $A_{}\# {}_{\theta}V$ is a BiHom-algebra if and only if the following compatibility conditions hold for all $a$, $b\in {A}$, $x$, $y$, $z\in V$:
\begin{enumerate}
\item[(A1)] $(ab) \trr \bevv(x)+\theta(a, b) \bevv(x)=\alaa(a)\trr (b\trr x)$,
\item[(A2)] $\alvv(x) \trl (a b)+\alvv(x)\theta(a, b)=(x \trl a) \trl \beaa(b)$,
\item[(A3)] $(a \trr x) \trl \beaa(b)=\alaa(a) \trr(x \trl b)$,
\item[(A4)] $ \alaa(a) \trr (x y)=(a \trr x) \bevv(y)$,
\item[(A5)] $(x y) \trl \beaa(a)=\alvv(x)(y \trl a)$,
\item[(A6)] $(x \trl a) \bevv(y)=\alaa(x)(a \trr y)$,
\item[(A7)] $\theta(a b, \beaa(c))+\theta(a, b) \triangleleft \beaa(c)=\alaa(a)\trr \theta(b, c)+\theta(\alaa(a), b c)$,
\item[(A8)] $(x y)\bevv(z)=\alvv(x)(yz)$,
\end{enumerate}
\end{lemma}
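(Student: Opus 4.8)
The plan is to verify directly that associativity of the multiplication on $E = A \oplus V$ is equivalent to the eight conditions (A1)--(A8), by specializing the computation already carried out in the proof of the cocycle cross product algebra lemma. Concretely, the type (a1) product is obtained from the general cocycle cross product by setting $\ppr = 0$, $\ppl = 0$, $\sigma = 0$, and renaming $H$ as $V$; so the cleanest route is to expand $\big((a,x)(b,y)\big)(c,z)$ and $(a,x)\big((b,y)(c,z)\big)$ from scratch with this simplified multiplication, collect terms lying in $A$ and terms lying in $V$, and match coefficients.

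First I would compute the $A$-component of both triple products. Since the first coordinate of $(a,x)(b,y)$ is just $ab$, the $A$-component of either bracketing is $(ab)c$ versus $a(bc)$; these agree precisely because $A$ is an associative algebra, which is condition (A8) in disguise (and needs no separate hypothesis beyond $A$ being an algebra). Next I would expand the $V$-component. Writing out $\big((a,x)(b,y)\big)(c,z)$, the second coordinate is
\begin{equation*}
(xy)z + (xy)\trl c + (a\trr y)\trl c + \theta(a,b)\trl c + (ab)\trr z + \theta(ab,c),
\end{equation*}
while the second coordinate of $(a,x)\big((b,y)(c,z)\big)$ is
\begin{equation*}
x(yz) + x\trl(bc) + x(b\trr z) + x\theta(b,c) + a\trr(yz) + a\trr(y\trl c) + a\trr(b\trr z) + a\trr\theta(b,c) + \theta(a,bc).
\end{equation*}
Matching the two expressions term by term, sorting according to which of the symbols $x,y,z$ and which of $a,b,c$ appear, yields: the pure-$V$ terms give (A8) $x(yz)=(xy)z$; the terms with one $A$-input acting give (A4) $a\trr(yz)=(a\trr x)y$ wait — more precisely the terms $(ab)\trr z$ versus $a\trr(b\trr z)$ together with $\theta(a,b)z$ give (A1); the terms $x\trl(bc)$ versus $(xy)\trl c$ wait, versus... give (A2); the mixed terms $(a\trr y)\trl c$ versus $a\trr(y\trl c)$ give (A3); $(xy)\trl c$ versus $x\trl(bc)$... — in any case the bookkeeping forces exactly (A1)--(A6); and the terms involving $\theta$ paired with a module action, namely $\theta(ab,c)+\theta(a,b)\trl c$ against $a\trr\theta(b,c)+\theta(a,bc)$, give (A7).

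The main obstacle is purely organizational: because there is no canonical direct-sum decomposition of the two-variable products into ``$A$-type'' and ``$V$-type'' pieces at the level of individual monomials (a term like $(a\trr y)\trl c$ lives in $V$ but mixes an $A$-variable and a $V$-variable), one must be careful that the seven independent conditions (A1)--(A7) are genuinely obtained by separating monomials according to the pattern of arguments, and that no cross-cancellation between different patterns is being silently used. This is the standard subtlety in matched-pair arguments: one appeals to the fact that $A$ and $V$ are independent (so an identity in $E$ holds iff it holds after projecting to each ``isotypic'' piece indexed by how many inputs come from $A$ versus $V$). I would remark that this specialization is exactly conditions (CP1)--(CP12) of the cocycle cross product system with $\ppr=\ppl=\sigma=0$: (CP1),(CP2),(CP3) collapse to (A1),(A2),(A3); (CP4),(CP5),(CP6) collapse to (A4),(A5),(A6); (CP7)--(CP12) become trivial or reduce to (A7); and the associativity of $A$ is (A8). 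Hence the lemma follows immediately from the already-established cocycle cross product algebra lemma, and the term-by-term check above is just the explicit unwinding of that reduction.
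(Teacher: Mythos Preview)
Your approach is essentially the same as the paper's: the paper does not give a separate proof of this lemma but simply remarks that (A1)--(A6) are the specializations of (CP1)--(CP6) (and (A7), (A8) are the specializations of (CC2), (CC5)) under $\ppr=\ppl=\sigma=0$, exactly as you say in your final paragraph.

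One correction to your explicit unwinding: your displayed expansion of the $V$-component of $\big((a,x)(b,y)\big)(c,z)$ is missing four terms, namely $(x\trl b)z$, $(a\trr y)z$, $\theta(a,b)z$, and $(x\trl b)\trl c$; and your expansion of the right-hand side is missing $x(y\trl c)$. With these restored, the ten terms on each side pair off cleanly by argument pattern to give exactly (A1)--(A8), and there is no hidden cross-cancellation issue --- each condition corresponds to a distinct triple of argument types (e.g.\ $(A,A,V)$, $(V,A,A)$, $(A,V,A)$, $(A,V,V)$, $(V,V,A)$, $(V,A,V)$, $(A,A,A)$, $(V,V,V)$), so the separation is automatic once you specialize one variable at a time. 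Your concern about ``silent cross-cancellation'' is therefore unfounded in this particular setting.
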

Note that (A1)--(A6)  are deduced from (CP1)--(CP6) and by (A8)  we obtain that $V$ is a BiHom-algebra. Furthermore, $V$ is in fact a subalgebra of $A_{}\#_{\theta}V$  but $A$ is not although $A$ is itself a BiHom-algebra.

Denote the set of all  algebraic extending datum of ${A}$ by $V$ of type (a1)  by $\mathcal{A}^{(1)}({A},V)$.


In the following, we always assume that $A$ is a subspace of a BiHom-vector space $E$, there exists a projection map $p: E \to{A}$ such that $p(a) = a$, for all $a \in {A}$.
Then the kernel space $V := \ker(p)$ is also a subspace of $E$ and a complement of ${A}$ in $E$.

\begin{lemma}\label{lem:33-1}
Let ${A}$ be a BiHom-algebra and $E$ a BiHom-vector space containing ${A}$ as a subspace.
Suppose that there is a BiHom-algebra structure on $E$ such that $V$ is a  subalgebra of $E$
and the canonical projection map $p: E\to A$ is a BiHom-algebra homomorphism.
Then there exists a BiHom-algebraic extending datum $\Omega^{(1)}({A},V)$ of ${A}$ by $V$ such that
$E\cong A_{}\#_{\theta}V$.
\end{lemma}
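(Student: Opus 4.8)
The strategy is the standard ``reconstruction'' argument used throughout the extending-structures literature: we use the projection $p$ to split $E$ as a vector space, transport the multiplication of $E$ through this splitting, and read off the structure maps. First I would set $V:=\ker(p)$, so that $E = A\oplus V$ as vector spaces, with $a\mapsto(a,0)$ and the elements of $V$ identified with $(0,x)$. Since $p$ is an algebra homomorphism and $p|_A=\id_A$, for $a,b\in A$ we have $p(ab)=ab$, so the $A$-component of the product $ab$ computed in $E$ is exactly the given product in $A$; writing $ab = (ab, \text{something})$ in $E$ and using that $p$ kills $V$, one checks the $V$-component of $a\cdot_E b$ defines a bilinear map $\theta:A\otimes A\to V$, i.e. $a\cdot_E b = (ab,\theta(a,b))$.

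Next I would define the remaining maps by restricting the multiplication of $E$ to the appropriate summands and composing with $p$ and with $\id-p$. For $a\in A$ and $x\in V$, the product $x\cdot_E a$ lies in $V$ because $V$ is a subalgebra together with the fact that $p(x\cdot_E a)=p(x)p(a)=0$ (here $p(x)=0$ since $x\in\ker p$); this gives $\trl:V\otimes A\to V$. Symmetrically, $a\cdot_E x\in V$ gives $\trr:A\otimes V\to V$. And for $x,y\in V$, $x\cdot_E y\in V$ since $V$ is a subalgebra, which is the ambient multiplication on $V$ appearing in condition (A8). Collecting these, $\Omega^{(1)}(A,V)=(\trr,\trl,\theta)$ together with the multiplication on $V$ is the desired extending datum, and by construction the product on $E$, when written in the coordinates $E\cong A\oplus V$, is precisely
\[
(a,x)(b,y) = \big(ab,\ xy+x\trl b+a\trr y+\theta(a,b)\big),
\]
which is the multiplication of $A\#_\theta V$. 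Finally, the fact that $E$ is an associative algebra forces the list of compatibility conditions (A1)--(A8) via the previous lemma, so $\Omega^{(1)}(A,V)\in\mathcal{A}^{(1)}(A,V)$ and $E\cong A\#_\theta V$ as algebras, the isomorphism being the identity map under the vector-space decomposition.

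The only genuinely delicate point is the bookkeeping that guarantees each of the four ``mixed'' products ($A\cdot_E A$, $A\cdot_E V$, $V\cdot_E A$, $V\cdot_E V$) lands in the claimed summand; this is where both hypotheses are used, that $p$ is a homomorphism (to control the $A$-parts) and that $V$ is a subalgebra (to force $V\cdot_E V\subseteq V$ and, combined with $p$, the two one-sided actions into $V$). Once the decomposition of the product is established, verifying that the coordinatized multiplication agrees with the formula for $A\#_\theta V$ and that associativity translates into (A1)--(A8) is routine given the earlier lemma, so I would not belabor it. I expect the main obstacle, such as it is, to be purely notational: keeping straight which composite with $p$ or $\id-p$ defines which structure map.
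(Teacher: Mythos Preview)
Your proposal is correct and follows essentially the same approach as the paper: split $E$ via $p$, define $\theta(a,b)=a\cdot_E b-p(a\cdot_E b)$, $x\trl a=x\cdot_E a$, $a\trr x=a\cdot_E x$, $x\cdot_V y=x\cdot_E y$, and check that $\varphi(a,x)=a+x$ is an algebra isomorphism. If anything you are more careful than the paper, which omits the explicit definition of $\trr$; one minor remark is that in showing $x\cdot_E a\in V$ and $a\cdot_E x\in V$ you only need that $p$ is multiplicative (so $p(x\cdot_E a)=p(x)p(a)=0$), not that $V$ is a subalgebra---the subalgebra hypothesis is used solely for $V\cdot_E V\subseteq V$.
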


\begin{proof}
Since $V$ is a  subalgebra of $E$, we have $x\cdot_E y\in V$ for all $x, y\in V$.
We define the extending datum of ${A}$ through $V$ by the following
\begin{eqnarray*}
\trl: V\otimes {A} \to V, \qquad {x} \triangleleft {a} &:=&{x}\cdot_E {a},\\
\theta: A\otimes A \to V, \qquad \theta(a,b) &:=&a\cdot_E b-p \bigl(a\cdot_E b\bigl),\\
{\cdot_V}: V \otimes V \to V, \qquad {x}\cdot_V {y}&:=& {x}\cdot_E{y}
\end{eqnarray*}
for any $a , b\in {A}$ and $x, y\in V$. It is easy to see that the above maps are  well defined and
$\Omega^{(1)}({A}, V)$ is an extending system of
${A}$ trough $V$ and
\begin{eqnarray*}
\varphi:A_{}\#_{\theta}V\to E, \qquad \varphi(a, x) := a+x
\end{eqnarray*}
is an isomorphism of BiHom-algebras.
\end{proof}

\begin{lemma}\label{lem:33-100}
Let $\Omega^{(1)}({A},V)$ and ${\Omega}'^{(1)}({A},V)$ be two algebraic extending datums of ${A}$ by $V$ of type (a1) and $A_{\theta}\#_{\trr, \trl} V$, $A_{\theta'}\#_{\trr', \trl'} V$ be the corresponding unified products. Then there exists a bijection between the set of all homomorphisms of Lie algebras $\varphi:A_{\theta}\#_{\trr, \trl} V\to A_{\theta'}\#_{\trr', \trl'} V$ whose restriction on ${A}$ is the identity map and the set of pairs $(r,s)$, where $r:V\rightarrow {A}$ and $s:V\rightarrow V$ are two linear maps satisfying
\begin{enumerate}
\item[(A1)] ${r}(x\trl a)={r}(x)a,$
\item[(A2)]  $a\cdot' b=ab+r\theta(a,b),$
\item[(A3)]  ${r}(xy)={r}(x)\cdot' {r}(y),$
\item[(A4)]  ${s}(x)\trl' a+\theta'(r(x), a)={s}(x\trl a),$
\item[(A5)]  $\theta'(a,b)=s\theta(a,b),$
\item[(A6)]  ${s}(xy)={s}(x)\cdot' {s}(y)+{s}(x)\trl'{r}(y)-{s}(y)\trl'{r}(x)+\theta'(r(x), r(y)).$
\end{enumerate}
for all $a\in{A}$ and $x$, $y\in V$.

Under the above bijection the homomorphism of algebras $\varphi=\varphi_{r,s}: A_{}\#_{\theta}V\to A_{}\#_{\theta'} V$ to $(r,s)$ is given  by $\varphi(a,x)=(a+r(x), s(x))$ for all $a\in {A}$ and $x\in V$. Moreover, $\varphi=\varphi_{r,s}$ is an isomorphism if and only if $s: V\rightarrow V$ is a linear isomorphism.
\end{lemma}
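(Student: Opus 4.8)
The statement asserts a bijection between the identity-on-$A$ homomorphisms $\varphi\colon A\#_\theta V \to A\#_{\theta'} V$ and pairs $(r,s)$ with $r\colon V\to A$, $s\colon V\to V$ satisfying the six displayed equations. The plan is to run the standard ``read off the components of $\varphi$'' argument. First I would observe that any linear map $\varphi\colon A\oplus V\to A\oplus V$ restricting to the identity on $A$ must have the form $\varphi(a,x)=(a+r(x),\,s(x))$ for uniquely determined linear maps $r\colon V\to A$ and $s\colon V\to V$: indeed $\varphi(a,0)=(a,0)$ forces the $A$-component to be $a+(\text{something linear in }x)$ and the $V$-component to be linear in $x$ alone. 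So the assignment $\varphi\mapsto(r,s)$ is well defined and injective, and conversely every pair $(r,s)$ determines such a $\varphi$; it remains only to identify \emph{which} pairs give algebra homomorphisms.

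Next I would impose the homomorphism condition $\varphi\big((a,x)(b,y)\big)=\varphi(a,x)\,\varphi(b,y)$ and expand both sides using the multiplication of $A\#_\theta V$ on the left (with $\theta$, $\trr$, $\trl$, $\cdot$) and of $A\#_{\theta'}V$ on the right (with $\theta'$, $\trr'$, $\trl'$, $\cdot'$). The product $(a,x)(b,y)=(ab,\,xy+x\trl b+a\trr y+\theta(a,b))$ gets hit by $\varphi$, producing an $A$-component $ab+r\big(xy+x\trl b+a\trr y+\theta(a,b)\big)$ and a $V$-component $s\big(xy+x\trl b+a\trr y+\theta(a,b)\big)$. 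On the other side, $\varphi(a,x)\varphi(b,y)=(a+r(x),s(x))(b+r(y),s(y))$ expands via the primed structure into an $A$-component $(a+r(x))(b+r(y))$ and a $V$-component built from $s(x)s(y)$, $s(x)\trl' (b+r(y))$, $(a+r(x))\trr' s(y)$, $\theta'(a+r(x),\,b+r(y))$. Matching the $A$-components and separating by ``type'' of argument (both in $A$; one in $A$, one in $V$; both in $V$) yields exactly $r(x\trl a)=r(x)\cdot' a$, $a\cdot' b=ab+r\theta(a,b)$, and $r(xy)=r(x)\cdot' r(y)$ — here one uses that $\trr$, $\trl$ land in $V$ so they contribute nothing to the $A$-component except through $r$, and that on pure $A\otimes A$ arguments the left side's $A$-component is $ab+r\theta(a,b)$ while the right's is $a\cdot' b$. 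Matching the $V$-components and again separating by type gives the remaining three identities, using linearity of $s$ to split $s$ of a sum and bilinearity of the primed operations to expand $\theta'(a+r(x),b+r(y))$ etc.; the mixed $A\otimes V$ term produces $s(x)\trl' a+\theta'(r(x),a)=s(x\trl a)$ (the pure $a\trr y$ contribution and the $r$-twist combining appropriately), the pure $A\otimes A$ term gives $\theta'(a,b)=s\theta(a,b)$, and the pure $V\otimes V$ term gives the last, longest identity for $s(xy)$.

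Finally I would note that the correspondence is visibly a bijection once the equivalence ``$\varphi$ is a homomorphism $\iff$ the six equations hold'' is established, and that $\varphi_{r,s}$ is bijective iff it is bijective on the $V$-direction, i.e.\ iff $s$ is a linear isomorphism — since $\varphi$ is block-upper-triangular with identity in the $A\oplus A$ block, its invertibility is equivalent to invertibility of the $V\to V$ block $s$. I expect the main obstacle to be purely bookkeeping: correctly sorting the eight or so terms on each side into the three argument-types and making sure the $r$-twisted contributions (e.g.\ the $\theta'(r(x),r(y))$ and $\theta'(r(x),a)$ pieces, and the asymmetric $-s(y)\trl' r(x)$ term) land in the right equation with the right sign. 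There is no conceptual difficulty beyond the expansion, and the coalgebra/weight structure plays no role here since only the algebra homomorphism condition is being analyzed; so the proof is a direct, if lengthy, identification of coefficients, and I would present it by expanding each side once and then listing the resulting component equations.
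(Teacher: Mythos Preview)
Your proposal is correct and follows essentially the same approach as the paper: write $\varphi(a,x)=(a+r(x),s(x))$, expand both sides of $\varphi((a,x)(b,y))=\varphi(a,x)\cdot'\varphi(b,y)$, and read off the component equations by separating according to argument type. The paper's own proof is in fact terser (it just displays the two expansions and asserts the equivalence) and contains apparent Lie-algebra artifacts in the expansion, so your more careful bookkeeping is, if anything, cleaner than what appears there.
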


\begin{proof}
Let $\varphi: A_{}\#_{\theta}V\to A_{}\#_{\theta'} V$  be an algebra homomorphism  whose restriction on ${A}$ is the identity map. Then $\varphi$ is determined by two linear maps $r: V\rightarrow {A}$ and $s: V\rightarrow V$ such that
$\varphi(a,x)=(a+r(x),s(x))$ for all $a\in {A}$ and $x\in V$.
In fact, we have to show
$$\varphi((a, x)(b, y))=\varphi(a, x)\cdot'\varphi(b, y).$$
The left hand side is equal to
\begin{eqnarray*}
&&\varphi((a, x)(b, y))\\
&=&\varphi\left({ab},\,  x\trl b-y\trl a+{xy}+\theta(a,b)\right)\\
&=&\big({ab}+ r(x\trl b)-r(y\trl a)+r({xy})+r\theta(a,b),\\
&&\qquad\quad s(x\trl b)-s(y\trl a)+s({xy})+s\theta(a,b)\big),
\end{eqnarray*}
and the right hand side is equal to
\begin{eqnarray*}
&&\varphi(a, x)\cdot' \varphi(b, y)\\
&=&(a+r(x),s(x))\cdot'  (b+r(y),s(y))\\
&=&\big((a+r(x))\cdot' (b+r(y)),  s(x)\trl'(b+r(y))-s(y)\trl'(a+r(x))\\
&&\qquad\qquad +s(x)\cdot' s(y)+\theta'(a+r(x),b+r(y))\big).
\end{eqnarray*}
Thus $\varphi$ is a homomorphism of BiHom-algebras if and only if the above conditions (A1)--(A6) hold.
\end{proof}

The second case is when $\theta=0$,  we obtain the following type (a2)  unified product for algebras.

\begin{lemma}
Let $A$ be a BiHom-algebra and $V$ a  BiHom-vector space. An \textit{extending
datum of $A$ through $V$} of type (a2)  is a system $\Omega(A, V) =
\bigl(\triangleleft, \, \triangleright, \, \leftharpoonup, \,
\rightharpoonup, \, \sigma, \, \cdot \bigl)$ consisting of linear maps
\begin{eqnarray*}
&&\ppr: V \otimes A \to A, \quad \ppl: A \otimes V \to A, \quad\triangleleft : V \otimes A \to V, \quad \trr: A \otimes V \to V \\
&& \sigma: V\otimes V \to A, \quad \cdot_V: V\otimes V \to V
\end{eqnarray*}
Denote by $A_{}\# {}_{\sigma}H$ the vector space $E={A}\oplus V$ together with the multiplication
\begin{align*}
{\alpha_E}(a, x):=(\alpha_A(a), \alpha_V(x)),\quad{\beta_E}(a, x):=(\beta_A(a), \beta_V(x)),\\
(a, x)(b, y)=\big(ab+x\ppr b+a\ppl y+\sigma(x, y), \, xy+x\trl b+a\trr y\big).
\end{align*}
Then $A_{}\# {}_{\sigma}H$  is a BiHom-algebra if and only if the following compatibility conditions hold for any $a, b, c\in A$,
$x, y, z\in V$:
\begin{enumerate}
\item[(B1)]   $( {a}     {b} ) \trr \bevv(x) =  \alaa({a} ) \trr ( {b}  \trr x),      \alvv(x) \trl( {a}{b} )  = (x \trl {a} ) \trl \beaa( {b} ),\\
                       ( {a} \trr x)\triangleleft  \beaa(b)   =  \alaa({a}) \trr (x \triangleleft  {b} )$,
\item[(B2)] $\alaa({a} ) \trr (x   y)= ( {a}  \ppl x) \trr \bevv(y) + ( {a}  \trr x)   \bevv(y)  $,
\item[(B3)] $ ( x   y) \trl  \beaa({a} )= \alvv( x) \trl (y\ppr  {a} ) + \alvv( x)    ( y\trl  {a} )$,
\item[(B4)] $ (x\trl  {a} )   \bevv(y) +(x\ppr  {a} ) \trr \bevv(y) = \alvv(x)    (  {a}  \trr y)+ \alvv(x) \trl ( {a}  \ppl y)$,
\item[(B5)] $(x y)\ppr  \beaa({a})+\sigma(x, y)  \beaa({a}) =  \alvv(x) \ppr (y\ppr  {a}  )+ \sigma( \alvv(x) ,y\trl  {a} )$,
\item[(B6)] $ \alaa({a})  \ppl (x   y)+ \alaa({a}) \sigma(x, y)= ( {a}  \ppl x) \ppl \bevv(y)+ \sigma( {a}  \trr x, \bevv(y))$,
\item[(B7)] $(x\ppr  {a} ) \ppl \bevv(y)+ \sigma(x\trl  {a} , \bevv(y))=\alvv( x )\ppr ( {a}  \ppl y) + \sigma( \alvv(x) ,  {a}  \trr y) $,
\item[(B8)] $ \alvv(x)\ppr (  {a}  {b} )=(x\ppr  {a} )  \beaa({b}) + (x\trl  {a} )\ppr  \beaa({b})$,
\item[(B9)] $(  {a}  {b} ) \ppl \bevv(x) = \alaa({a}) ( {b}  \ppl x) + \alaa( {a})  \ppl (  {b}  \trr x)$,
\item[(B10)] $( {a}  \ppl x) \beaa( {b}) + ( {a}  \trr x)\ppr \beaa( {b})  =  \alaa( {a}) (x\ppr  {b} )+ \alaa( {a})  \ppl (x\trl  {b} )$,
\item[(B11)] $ \sigma(x, y) \ppl  \bevv(z) + \sigma(x   y,  \bevv(z) )= \alvv(x) \ppr \sigma(y, z) + \sigma( \alvv(x) ,y   z) $,
\item[(B12)] $\sigma(x, y) \trr  \bevv(z) + (x   y)   \bevv( z)  =  \alvv(x) \trl \sigma(y, z)+  \alvv(x)  ( y   z)$.
\end{enumerate}
\end{lemma}

Similar as the proof of Lemma \ref{lem:33-1} and  Lemma \ref{lem:33-100} we obtain the following results.
\begin{lemma}\label{lem:33-2}
Let $A$ be a BiHom-algebra, $E$ a  vector space containing $A$ as a subspace.
If there is a BiHom-algebra structure on $E$ such that $A$ is a subalgebra of $E$. Then there exists a BiHom-algebra
extending structure $\Omega(A, V) = \bigl(\triangleleft, \, \triangleright, \,
\leftharpoonup, \, \rightharpoonup, \, \sigma, \, \cdot \bigl)$ of $A$ through $V$ such that there is an isomorphism of algebras $E\cong A_{\sigma}\#_{}H$.
\end{lemma}

\begin{lemma}\label{lem:33-200}
Let $\Omega(A, V) = \bigl(\triangleleft, \, \triangleright, \,
\leftharpoonup, \, \rightharpoonup, \, \sigma, \, \cdot \bigl)$ and
$\Omega(A, V) = \bigl(\triangleleft ', \, \triangleright ', \,
\leftharpoonup ', \, \rightharpoonup ', \, \sigma ', \, \cdot ' \bigl)$
be two  algebraic extending structures of $A$ through $V$ and $A{}_{\sigma}\#_{}V$, respectively $A{}_{\sigma'}\#_{}  V$ the  associated unified
products. Then there exists a bijection between the set of all
homomorphisms of algebras $\psi: A{}_{\sigma}\#_{}V\to A{}_{\sigma'}\#_{}  V$which
stabilize $A$ and the set of pairs $(r, \, v)$, where $r: V \to
A$, $s: V \to V$ are linear maps satisfying the following
compatibility conditions for any $x \in A$, $u$, $v \in V$:
\begin{enumerate}
\item[(M1)] $r(x \cdot y) = r(x)r(y) + \sigma ' (s(x), s(y)) - \sigma(x, y) + r(x) \ppl' s(y) + s(x) \ppr' r(y)$,
\item[(M2)] $s(x \cdot y) = r(x) \trr ' s(y) + s(x)\trl ' r(y) + s(x) \cdot ' s(y)$,
 \item[(M3)] $r(x\trl  {a}) = r(x) {a} - x \ppr {a} + s(x) \ppr' {a}$,
  \item[(M5)] $r({a} \trr x) = {a}r(x) - {a}\ppl x + {a} \ppl' s(x)$,
 \item[(M4)] $s(x\trl {a}) = s(x)\trl' {a}$,
 \item[(M6)] $s({a}\trr x) = {a} \trr' s(x)$.
\end{enumerate}
Under the above bijection the homomorphism of algebras $\varphi =\varphi _{(r, s)}: A_{\sigma}\# {}_{}H \to A_{\sigma'}\# {}_{}H$ corresponding to
$(r, s)$ is given for any $a\in A$ and $x \in V$ by:
$$\varphi(a, x) = (a + r(x), s(x))$$
Moreover, $\varphi  = \varphi _{(r, s)}$ is an isomorphism if and only if $s: V \to V$ is an isomorphism linear map.
\end{lemma}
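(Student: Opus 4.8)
The proof is the associative-algebra counterpart of the classification-by-equivalence arguments in \cite{AM6}: the idea is to put an arbitrary stabilizing homomorphism in ``triangular form'' relative to the decomposition $E=A\oplus V$ and then read off the constraints. First I would take an arbitrary algebra homomorphism $\psi: A{}_{\sigma}\#_{}V\to A{}_{\sigma'}\#_{}V$ that stabilizes $A$, i.e.\ $\psi(a,0)=(a,0)$ for all $a\in A$. Since $\{0\}\oplus V$ is a complement of $A$ in the target, there are unique linear maps $r: V\to A$ and $s: V\to V$ with $\psi(0,x)=(r(x),s(x))$, and additivity of $\psi$ forces $\psi(a,x)=(a+r(x),s(x))$ for all $a\in A$ and $x\in V$. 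Conversely, any pair of linear maps $(r,s)$ defines a linear map $\varphi_{(r,s)}(a,x):=(a+r(x),s(x))$ stabilizing $A$, so the assignment $\psi\mapsto(r,s)$ is a bijection between stabilizing \emph{linear} maps and such pairs; it remains to single out which $(r,s)$ yield algebra homomorphisms, together with the formula $\varphi_{(r,s)}(a,x)=(a+r(x),s(x))$ already recorded.

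The heart of the argument is then to impose multiplicativity, $\varphi_{(r,s)}\big((a,x)(b,y)\big)=\varphi_{(r,s)}(a,x)\cdot'\varphi_{(r,s)}(b,y)$, expand both sides using the type-(a2) products on $A{}_{\sigma}\#_{}V$ and $A{}_{\sigma'}\#_{}V$, and compare the $A$-component and the $V$-component of the resulting equality. Because each monomial produced involves exactly one of the structure maps $\ppr,\ppl,\trr,\trl,\sigma,\sigma'$ (or one of the products of $A$ or $V$), the whole identity decomposes into the three mutually independent pieces obtained by the specializations $a=b=0$, $a=0=y$, and $b=0=x$. The case $a=b=0$ yields the two ``cocycle'' relations for $r(x\cdot y)$ and $s(x\cdot y)$ (those containing $\sigma$ and $\sigma'$); the case $a=0=y$ yields the relations for $r(x\trl a)$ and $s(x\trl a)$; and the case $b=0=x$ yields the relations for $r(a\trr x)$ and $s(a\trr x)$. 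This shows the six displayed compatibility conditions are necessary, and reading the very same expansion in reverse shows that they are sufficient for $\varphi_{(r,s)}$ to be an algebra homomorphism; hence $\psi\leftrightarrow(r,s)$ is the claimed bijection.

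For the last assertion, I would observe that, viewed as a linear endomorphism of $E=A\oplus V$, the map $\varphi_{(r,s)}$ is ``lower triangular'' with diagonal blocks $\id_A$ and $s$, so it is bijective if and only if $s$ is; when $s$ is invertible, $\varphi_{(r,s)}^{-1}$ again stabilizes $A$ and therefore equals some $\varphi_{(r',s')}$ with $s'=s^{-1}$, and it is automatically an algebra homomorphism, being the inverse of one. The only genuinely laborious step is the term-by-term matching in the multiplicativity expansion, where one must keep the six actions and the two cocycles straight; this is purely mechanical and brings in nothing beyond the bookkeeping already carried out in \cite{AM6}, so I would present it in compressed form rather than in full.
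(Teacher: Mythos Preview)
Your proposal is correct and follows exactly the standard approach. In fact the paper does not give its own proof of this lemma, merely citing \cite{AM6}; for the parallel type-(a1) lemma the paper does provide a proof, and it is precisely the argument you outline: write the stabilizing map as $\varphi(a,x)=(a+r(x),s(x))$, expand both sides of $\varphi((a,x)(b,y))=\varphi(a,x)\cdot'\varphi(b,y)$, and compare components. Your use of bilinearity to reduce to the specializations $a=b=0$, $a=y=0$, $b=x=0$ (together with the trivial case $x=y=0$) is a clean way to organize the bookkeeping and is exactly how the computation goes in \cite{AM6}.
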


Let ${A}$ be a BiHom-algebra and $V$ a BiHom-vector space. Two algebraic extending systems $\Omega^{(i)}({A}, V)$ and ${\Omega'^{(i)}}({A}, V)$  are called equivalent if $\varphi_{r,s}$ is an isomorphism.  We denote it by $\Omega^{(i)}({A}, V)\equiv{\Omega'^{(i)}}({A}, V)$.
From the above Lemmas \ref{lem:33-1}, \ref{lem:33-100}, \ref{lem:33-2} and \ref{lem:33-200}, we obtain the following Theorem \ref{thm3-1}.

\begin{theorem}\label{thm3-1}
Let ${A}$ be a BiHom-algebra, $E$ a BiHom-vector space containing ${A}$ as a subspace and
$V$ be a complement of ${A}$ in $E$.
Denote $\mathcal{HA}(V,{A}):=\mathcal{A}^{(1)}({A},V)\sqcup \mathcal{A}^{(2)}({A},V) /\equiv$. Then the map
\begin{eqnarray}
&&\Psi: \mathcal{HA}(V,{A})\rightarrow Extd(E,{A}),\\
&&\overline{\Omega^{(1)}({A},V)}\mapsto A_{}\#_{\theta} V,\quad \overline{\Omega^{(2)}({A},V)}\mapsto A_{\sigma}\# {}_{} V
\end{eqnarray}
is bijective, where $\overline{\Omega^{(i)}({A}, V)}$ is the equivalence class of $\Omega^{(i)}({A}, V)$ under $\equiv$.
\end{theorem}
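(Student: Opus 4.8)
The plan is to assemble $\Psi$ from the two ``local'' classifications established above --- the type (a1) package (Lemma~\ref{lem:33-1} together with the subsequent lemma parametrizing the homomorphisms $A_{}\#_{\theta}V\to A_{}\#_{\theta'}V$ by pairs $(r,s)$) and the type (a2) package imported from \cite{AM6} --- and then to check separately that $\Psi$ is well defined on $\equiv$-classes, surjective, and injective. The organizing observation is the dichotomy built into the notion of an extending system: if $(E,\cdot_E)$ is an algebra structure on $E$ with $A$ a subspace and $V$ the fixed complement, then either the inclusion $i\colon A\to E$ is an algebra map --- so $A$ is a subalgebra of $E$, the situation of \cite{AM6} --- or the projection $p\colon E\to A$ is an algebra map, in which case $V=\ker p$ is a two-sided ideal of $E$, hence in particular a subalgebra, so that the hypotheses of Lemma~\ref{lem:33-1} are met. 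Thus every element of $Extd(E,A)$ falls into at least one of the two strata.

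For surjectivity, take an arbitrary class in $Extd(E,A)$ with representative $(E,\cdot_E)$ and split according to the dichotomy. In the first case the reconstruction theorem of \cite{AM6} produces an extending structure $\Omega^{(2)}(A,V)$ of type (a2) together with an algebra isomorphism $E\cong A_{\sigma}\#V$ that is the identity on $A$, so the class lies in the image of $\overline{\Omega^{(2)}(A,V)}$. In the second case Lemma~\ref{lem:33-1} produces a datum $\Omega^{(1)}(A,V)=(\trr,\trl,\theta)$ with $E\cong A_{}\#_{\theta}V$; here the fact that $V=\ker p$ is an ideal is exactly what forces the prospective $A$-valued structure maps to vanish, since $a\cdot_E x$, $x\cdot_E a$ and $x\cdot_E y$ all lie in $V$ for $a\in A$ and $x,y\in V$, so only the triple $(\trr,\trl,\theta)$ survives and we genuinely land in type (a1). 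Hence $\Psi$ is onto.

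For well-definedness and injectivity, one uses that two algebra structures on $E$ coming from extending data of the same type define the same class in $Extd(E,A)$ precisely when there is an algebra isomorphism of the corresponding unified products that stabilizes $A$; by the two $(r,s)$-parametrization lemmas such an isomorphism is exactly a pair $(r,s)$ with $r\colon V\to A$, $s\colon V\to V$ satisfying the listed compatibility relations and with $s$ bijective, i.e.\ an equivalence $\Omega^{(i)}(A,V)\equiv\Omega'^{(i)}(A,V)$. This yields at once that $\Psi$ is well defined on $\equiv$-classes and injective on each stratum. The point I expect to be the main obstacle is the interaction between the two strata: a type (a1) datum with $\theta=0$ and a type (a2) datum with $\sigma=0$ and trivial $\ppr,\ppl$ present literally the same multiplication on $E$, so one must verify that on this overlap the two presentations coincide and are identified in $\mathcal{HA}(V,A)$, so that the disjoint union modulo $\equiv$ parametrizes $Extd(E,A)$ without repetition. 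Once this compatibility of the two families is settled, combining surjectivity with the stratum-wise injectivity shows that $\Psi$ is a bijection.
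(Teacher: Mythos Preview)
Your proposal is correct and follows exactly the route the paper intends: the paper gives no detailed argument for this theorem at all, merely writing ``From the above lemmas, we obtain the following result,'' so the two reconstruction lemmas (Lemma~\ref{lem:33-1} and the theorem from \cite{AM6}) together with the two $(r,s)$--parametrization lemmas are precisely what one is meant to invoke. Your observation that the property ``$i$ is an algebra map'' is preserved under isomorphisms stabilizing $A$ is what separates the two strata, and the overlap issue you flag (type (a1) with $\theta=0$ versus type (a2) with $\sigma,\ppr,\ppl$ trivial) is a genuine subtlety that the paper does not address in its definition of $\equiv$ on the disjoint union; you are right to single it out as the point requiring care.
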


Next we consider the BiHom-coalgebra structures on $E=A^{P}\# {}^{Q}V$.
There are two cases for $(A,\Delta_A)$ to be a  coalgebra. The first case is  when $Q=0$,  then we obtain the following type (c1) unified product for  coalgebras.
\begin{lemma}\label{cor01}
Let $({A},\Delta_A)$ be a BiHom- coalgebra and $V$ a BiHom-vector space.
An  extending datum  of ${A}$ by $V$ of  type (c1) is  $\Omega^c({A},V)=(\phi, {\psi},\rho,\gamma, P, \Delta_V)$ with  linear maps
\begin{eqnarray*}
&&\phi: A \to V \otimes A,\quad  \psi: A \to A\otimes V,\\
&&\rho: V  \to A\otimes V,\quad  \gamma: V \to V \otimes A,\\
&& {P}: A\rightarrow {V}\otimes {V},\quad\Delta_V: V\rightarrow V\otimes V.
\end{eqnarray*}
 Denote by $A^{P}\# {}^{} V$ the vector space $E={A}\oplus V$ with the linear map
$\Delta_E: E\rightarrow E\otimes E$ given by
$$\Delta_{E}(a,x)=(\Delta_{A}+\phi+\psi+P)(a)+(\Delta_{V}+\rho+\gamma)(x), $$
that is
$$\Delta_{E}(a)= a\li \ot a\lii+ a\moi \ot a\mo+a\mo\ot a\mi+a\ppi\ot a\pii,$$
$$\Delta_{E}(x)= x\li \ot x\lii+ x\boi \ot x\boo+x\boo \ot x\bi.$$
Then $A^{P}\# {}^{} V$  is a  BiHom-coalgebra with the comultiplication given above if and only if the following compatibility conditions hold:
\begin{enumerate}
\item[(C1)] $\alvv(a_{(-1)}) \otimes \Delta_{A}\left(a_{(0)}\right)=\phi\left(a_{1}\right) \otimes \beaa(a_{2})+\gamma\left(a_{(-1)}\right) \otimes \beaa(a_{(0)})$,
\item[(C2)] $\Delta_{A}\left(a_{(0)}\right) \otimes \bevv(a_{(1)})=\alaa(a_{1}) \otimes \psi\left(a_{2}\right)+\alaa(a_{(0)} )\otimes \rho\left(a_{(1)}\right)$,
\item[(C3)] $\alaa(x_{[-1]} )\otimes \Delta_{V}\left(x_{[0]}\right)=\rho\left(x_{1}\right) \otimes \bevv(x_{2})+\psi\left(x_{[-1]}\right) \otimes \bevv(x_{[0]})$,
\item[(C4)] $\Delta_{V}(x\boo)\ot \beaa(x\bi)=\alvv(x\boo)\ot \phi(x\bi)+\alvv(x_1)\ot \gamma(x_2)$,
\item[(C5)] $\alaa(a_{1}) \otimes \phi\left(a_{2}\right)+\alaa(a_{(0)}) \otimes \gamma\left(a_{(1)}\right)=\psi\left(a_{1}\right) \otimes \beaa(a_{2})+\rho\left(a_{(-1)}\right) \otimes \beaa(a_{(0)})$,
\item[(C6)] $\alvv(x_{1}) \otimes \rho\left(x_{2}\right)+\alvv(x_{[0]} )\otimes \psi\left(x_{[1]}\right)=\phi\left(x_{[-1]}\right) \otimes \bevv(x_{[0]})+\gamma\left(x_{1}\right) \otimes \bevv( x_{2})$,
\item[(C7)] $\alvv(a_{(-1)}) \otimes \phi\left(a_{(0)}\right)+\alvv(a\ppi)\otimes \gamma\left(a\pii\right)=\Delta_{V}\left(a_{(-1)}\right) \otimes \beaa(a_{(0)})+P\left(a_{1}\right) \otimes \beaa(a_{2})$,
\item[(C8)] $\alaa(a_{(0)})\ot \Delta_{V}\left(a_{(1)}\right)+\alaa(a\ppi)\otimes P\left(a\pii\right)\\
=\psi\left(a_{(0)}\right) \otimes \bevv(a_{(1)})+\rho\left(a\ppi\right) \otimes \bevv(a\pii)$,
\item[(C9)] $\alvv(a_{(-1)} )\otimes \psi\left(a_{(0)}\right)+\alvv(a\ppi )\otimes \rho\left(a\pii\right)=\phi\left(a_{(0)}\right) \otimes \bevv(a_{(1)})+\gamma\left(a\ppi\right) \otimes \bevv(a\pii)$,
\item[(C10)] $\alaa(x_{[-1]}) \otimes \rho\left(x_{[0]}\right)=\Delta_{A}\left(x_{[-1]}\right) \otimes \bevv(x_{[0]})$,
\item[(C11)] $\alvv(x\boo)\ot\Delta_A(x\bi)=\gamma(x\boo)\ot \beaa(x\bi)$,
\item[(C12)] $\alaa(x\boi)\ot\gamma(x\boo)=\rho(x\boo)\ot \beaa(x\bi)$.
\end{enumerate}
\end{lemma}
Denote the set of all  coalgebraic extending datum of ${A}$ by $V$ of type (c1) by $\mathcal{C}^{(1)}({A},V)$.

\begin{lemma}\label{lem:33-3}
Let $({A},\Delta_A)$ be a  BiHom-coalgebra and $E$ a BiHom-vector space containing ${A}$ as a subspace. Suppose that there is a  BiHom-coalgebra structure $(E,\Delta_E)$ on $E$ such that  $p: E\to {A}$ is a  BiHom-coalgebra homomorphism. Then there exists a  BiHom-coalgebraic extending system $\Omega^c({A}, V)$ of $({A},\Delta_A)$ by $V$ such that $(E,\Delta_E)\cong A^{P}\# {}^{} V$.
\end{lemma}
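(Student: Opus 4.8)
The plan is to dualize the argument of Lemma~\ref{lem:33-1} (the associative case), replacing products by coproducts. Since $A\subseteq E$ and $p\colon E\to A$ is a linear retraction with $p|_A=\id_A$, the map $\pi:=\id_E-i\circ p\colon E\to E$ (with $i\colon A\hookrightarrow E$ the inclusion) is an idempotent with image the complement $V:=\ker p$, so $E=A\oplus V$ as vector spaces and correspondingly $E\otimes E=(A\otimes A)\oplus(V\otimes A)\oplus(A\otimes V)\oplus(V\otimes V)$. Composing $\Delta_E$ with the four associated projections, I would set
\[
\phi(a):=(\pi\otimes p)\Delta_E(a),\quad \psi(a):=(p\otimes\pi)\Delta_E(a),\quad P(a):=(\pi\otimes\pi)\Delta_E(a),
\]
\[
\rho(x):=(p\otimes\pi)\Delta_E(x),\quad \gamma(x):=(\pi\otimes p)\Delta_E(x),\quad \Delta_V(x):=(\pi\otimes\pi)\Delta_E(x),
\]
for $a\in A$ and $x\in V$; these are well-defined linear maps of exactly the types appearing in Lemma~\ref{cor02}.

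First I would pin down the two remaining components. Since $p$ is a coalgebra homomorphism, $(p\otimes p)\circ\Delta_E=\Delta_A\circ p$: evaluating on $a\in A$ gives $(p\otimes p)\Delta_E(a)=\Delta_A(a)$, so the $A\otimes A$-component of $\Delta_E|_A$ is the given $\Delta_A$; evaluating on $x\in V$ gives $(p\otimes p)\Delta_E(x)=\Delta_A(p(x))=0$, so the $A\otimes A$-component of $\Delta_E|_V$ vanishes. This is precisely what forces the extending datum to be of type (c1), i.e. $Q=0$, and hence why the biproduct in the conclusion is $A^{P}\# {}^{}V$ and not a $Q$-twisted one. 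Consequently, under the identification $E=A\oplus V$, the map $\Delta_E$ agrees with the comultiplication $\Delta_E=(\Delta_A+\phi+\psi+P)$ on $A$ and $\Delta_E=(\Delta_V+\rho+\gamma)$ on $V$ built from the datum $\Omega^c(A,V)=(\phi,\psi,\rho,\gamma,P,\Delta_V)$ as in Lemma~\ref{cor02}.

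Next I would verify that $\Omega^c(A,V)$ satisfies (C1)--(C12). Because $(E,\Delta_E)$ is a genuine coalgebra, $(\Delta_E\otimes\id)\Delta_E=(\id\otimes\Delta_E)\Delta_E$; expanding both sides and decomposing along the eight bigraded summands of $E\otimes E\otimes E$ reproduces term for term the computation in the proof of Lemma~\ref{cor02} (equivalently Lemma~\ref{lem2} with $Q=0$), so comparison of components yields exactly (C1)--(C12), the $A\otimes A\otimes A$-component being the coassociativity of $\Delta_A$, which holds by hypothesis. Thus $\Omega^c(A,V)$ is a coalgebraic extending datum of type (c1) and $A^{P}\# {}^{}V$ is defined. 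Finally, $\varphi\colon A^{P}\# {}^{}V\to E$, $\varphi(a,x):=a+x$, is a linear isomorphism which intertwines the two comultiplications by the very construction of $\phi,\psi,P,\rho,\gamma,\Delta_V$, hence an isomorphism of coalgebras.

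I expect the only genuine work to be the bookkeeping in the third step: tracking which of the eight graded pieces of $E^{\otimes 3}$ each summand of $(\Delta_E\otimes\id)\Delta_E$ and $(\id\otimes\Delta_E)\Delta_E$ lies in, and checking that matching them reproduces precisely the list (C1)--(C12) with no surplus relations. There is no conceptual obstacle: once one observes that $p$ being a coalgebra map determines the two ``pure-$A$'' components (giving $\Delta_A$ back and killing $Q$), the whole argument is the comultiplicative mirror of Lemma~\ref{lem:33-1}.
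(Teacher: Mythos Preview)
Your proposal is correct and follows essentially the same approach as the paper: define the extending datum by composing $\Delta_E$ with the projections $p$ and $\pi$ onto $A$ and $V$, and then observe that $\varphi(a,x)=a+x$ is a coalgebra isomorphism. If anything, your argument is more explicit than the paper's, which writes down the same definitions (including a map $Q(x)=(p\otimes p)\Delta_E(x)$) but does not spell out, as you do, that the coalgebra-homomorphism hypothesis on $p$ is exactly what forces $Q=0$ and recovers $\Delta_A$ as the $A\otimes A$-component on $A$.
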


\begin{proof}
Let $p: E\to {A}$ and $\pi: E\to V$ be the projection map and $V=\ker({p})$.
Then the extending datum of $({A},\Delta_A)$ by $V$ is defined as follows:
\begin{eqnarray*}
&&{\phi}: A\rightarrow V\ot {A},~~~~{\phi}(a)=(\pi\otimes {p})\Delta_E(a),\\
&&{\psi}: A\rightarrow A\ot V,~~~~{\psi}(a)=({p}\otimes \pi)\Delta_E(a),\\
&&{\rho}: V\rightarrow A\ot V,~~~~{\rho}(x)=({p}\otimes \pi)\Delta_E(x),\\
&&{\gamma}: V\rightarrow V\ot {A},~~~~{\gamma}(x)=(\pi\otimes {p})\Delta_E(x),\\
&&\Delta_V: V\rightarrow V\otimes V,~~~~\Delta_V(x)=(\pi\otimes \pi)\Delta_E(x),\\
&&Q: V\rightarrow {A}\otimes {A},~~~~Q(x)=({p}\otimes {p})\Delta_E(x)\\
&&P: A\rightarrow {V}\otimes {V},~~~~P(a)=({\pi}\otimes {\pi})\Delta_E(a).
\end{eqnarray*}
One check that  $\varphi: A^{P}\# {}^{} V\to E$ given by $\varphi(a,x)=a+x$ for all $a\in A, x\in V$ is a  coalgebra isomorphism.
\end{proof}

\begin{lemma}\label{lem-c1}
Let $\Omega^{(1)}({A}, V)=(\phi, {\psi},\rho,\gamma, P, \Delta_V)$ and ${\Omega'^{(1)}}({A}, V)=(\phi', {\psi'},\rho',\gamma',  P', \Delta'_V)$ be two  coalgebraic extending datums of $({A},\Delta_A)$ by $V$. Then there exists a bijection between the set of   coalgebra homomorphisms $\varphi: A^{P}\# {}^{} V\rightarrow A^{P'}\# {}^{} V$ whose restriction on ${A}$ is the identity map and the set of pairs $(r,s)$, where $r:V\rightarrow {A}$ and $s:V\rightarrow V$ are two linear maps satisfying
\begin{eqnarray}
\label{comorph11}&&P'(a)=s(a\ppi)\ot s(a\pii),\\
\label{comorph121}&&\phi'(a)={s}(a\lmoi)\ot a\lmo+s(a\ppi)\ot r(a\pii),\\
\label{comorph122}&&\psi'(a)=a\lmo\ot {s}(a\mi) +r(a\ppi)\ot s(a\pii),\\
\label{comorph13}&&\Delta'_A(a)=\Delta_A(a)+{r}(a\lmoi)\ot a\lmo+a\lmo\ot {r}(a\mi)+r(a\ppi)\ot r(a\pii)\\
\label{comorph21}&&\Delta_V'({s}(x))=({s}\otimes {s})\Delta_V(x),\\
\label{comorph221}&&{\rho}'({s}(x))=s(x\li)\ot r(x\lii)+x\boi\ot s(x\boo),\\
\label{comorph222}&&{\gamma}'({s}(x))=s(x\li)\ot r(x\lii)+s(x\boo)\ot x\bi,\\
\label{comorph23}&&\Delta'_A({r}(x))=r(x\li)\ot r(x\lii)+x\boi\ot r(x\boo)+r(x\boo)\ot x\bi.
\end{eqnarray}
Under the above bijection the  coalgebra homomorphism $\varphi=\varphi_{r,s}: A^{P}\# {}^{} V\rightarrow A^{P'}\# {}^{} V$ to $(r,s)$ is given by $\varphi(a,x)=(a+r(x),s(x))$ for all $a\in {A}$ and $x\in V$. Moreover, $\varphi=\varphi_{r,s}$ is an isomorphism if and only if $s: V\rightarrow V$ is a linear isomorphism.
\end{lemma}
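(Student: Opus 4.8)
The plan is to adapt the standard ``morphisms stabilizing a fixed subobject'' argument, run in the coalgebra direction dual to the algebra extending lemmas above. Since $\Omega^{(1)}$ and $\Omega'^{(1)}$ are coalgebraic extending datums, both $A^{P}\# {}^{} V$ and $A^{P'}\# {}^{} V$ are honest coalgebras by Lemma \ref{cor02}, so the whole question is about morphisms between them. First I would pin down the shape of $\varphi$: viewing $E=A\oplus V$ as a vector space, with $\iota_{V}\colon V\hookrightarrow E$ the inclusion and $p\colon E\to A$, $\pi\colon E\to V$ the two projections, the hypothesis $\varphi|_{A}=\id_{A}$ gives $\varphi(a,0)=(a,0)$, hence $\varphi(a,x)=(a,0)+\varphi(0,x)$ by linearity; setting $r:=p\circ\varphi\circ\iota_{V}\colon V\to A$ and $s:=\pi\circ\varphi\circ\iota_{V}\colon V\to V$ yields $\varphi(a,x)=(a+r(x),s(x))=:\varphi_{r,s}(a,x)$. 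Conversely every pair of linear maps $(r,s)$ defines such a $\varphi_{r,s}$ fixing $A$, so the content of the lemma is exactly to decide when $\varphi_{r,s}$ intertwines $\Delta_{E}$ and $\Delta_{E'}$.

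Next I would impose $(\varphi_{r,s}\ot\varphi_{r,s})\circ\Delta_{E}=\Delta_{E'}\circ\varphi_{r,s}$, which by linearity only has to be checked on $a\in A$ and on $x\in V$ separately. For $a\in A$, expand $\Delta_{E}(a)=\Delta_{A}(a)+\phi(a)+\psi(a)+P(a)$, apply $\varphi_{r,s}\ot\varphi_{r,s}$ (using $\varphi_{r,s}|_{A}=\id$ and $\varphi_{r,s}|_{V}=(r,s)$), and compare with $\Delta_{E'}(\varphi_{r,s}(a))=\Delta_{E'}(a)=\Delta'_{A}(a)+\phi'(a)+\psi'(a)+P'(a)$; decomposing $E'\ot E'=(A\ot A)\oplus(A\ot V)\oplus(V\ot A)\oplus(V\ot V)$ and equating the four homogeneous components produces \eqref{comorph11}, \eqref{comorph121}, \eqref{comorph122} and \eqref{comorph13}. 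Likewise, for $x\in V$, expand $\Delta_{E}(x)=\Delta_{V}(x)+\rho(x)+\gamma(x)$, apply $\varphi_{r,s}\ot\varphi_{r,s}$, and on the other side write $\Delta_{E'}(\varphi_{r,s}(0,x))=\Delta_{E'}(r(x))+\Delta_{E'}(s(x))$ with $\Delta_{E'}(r(x))=\Delta'_{A}(r(x))+\phi'(r(x))+\psi'(r(x))+P'(r(x))$ and $\Delta_{E'}(s(x))=\Delta'_{V}(s(x))+\rho'(s(x))+\gamma'(s(x))$; equating the four homogeneous components gives \eqref{comorph21}, \eqref{comorph221}, \eqref{comorph222} and \eqref{comorph23}. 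Every step here is reversible, so $\varphi_{r,s}$ is a coalgebra homomorphism if and only if $(r,s)$ satisfies \eqref{comorph11}--\eqref{comorph23}, which is the claimed bijection, with $\varphi$ recovered via the formula $\varphi(a,x)=(a+r(x),s(x))$ built into the construction.

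For the isomorphism clause, if $s$ is bijective I would check directly that $\varphi_{r,s}$ is invertible with two-sided inverse $\varphi_{r',s'}$, where $s'=s^{-1}$ and $r'=-\,r\circ s^{-1}$; conversely, since $\varphi_{r,s}$ stabilizes $A$ it descends to the quotient $E/A\cong V$ where it acts as $s$, so bijectivity of $\varphi_{r,s}$ forces that of $s$. This part is formal once the normal form is in place.

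I expect the main obstacle to be purely organizational rather than conceptual. When $\varphi_{r,s}\ot\varphi_{r,s}$ is applied to a single term such as $\Delta_{V}(x)\in V\ot V$, $\rho(x)\in A\ot V$, or $P(a)\in V\ot V$, its image spreads across several of the four homogeneous summands of $E'\ot E'$, so each structure map on the source feeds into more than one of the target identities; keeping the bookkeeping of which contribution lands in which component — so that, after collecting terms, the eight equations \eqref{comorph11}--\eqref{comorph23} come out in the stated form — is the delicate point, and it runs exactly parallel to the coassociativity computation already carried out in Lemma \ref{lem2} and dually to the algebra case treated above.
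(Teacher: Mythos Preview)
Your proposal is correct and follows essentially the same approach as the paper: write $\varphi$ in the normal form $(a+r(x),s(x))$, impose the coalgebra morphism condition separately on $a\in A$ and on $x\in V$, and then read off the eight identities by projecting onto the four homogeneous components of $E'\otimes E'$. Your treatment is in fact slightly more careful than the paper's own proof in two places --- you keep track of the terms $\phi'(r(x))$, $\psi'(r(x))$, $P'(r(x))$ when expanding $\Delta_{E'}(r(x))$, and you give an explicit two-sided inverse $\varphi_{-r\circ s^{-1},\,s^{-1}}$ for the isomorphism clause --- but the overall argument is the same.
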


\begin{proof}
Let $\varphi: A^{P}\# {}^{} V\rightarrow A^{P'}\# {}^{} V$  be a  BiHom-coalgebra homomorphism  whose restriction on ${A}$ is the identity map. Then $\varphi$ is determined by two linear maps $r: V\rightarrow {A}$ and $s: V\rightarrow V$ such that
$\varphi(a,x)=(a+r(x),s(x))$ for all $a\in {A}$ and $x\in V$. We will prove that
$\varphi$ is a homomorphism of  coalgebras if and only if the above condtions hold.
First we it easy to see that  $\Delta'_E\varphi(a)=(\varphi\otimes \varphi)\Delta_E(a)$ for all $a\in {A}$.
\begin{eqnarray*}
\Delta'_E\varphi(a)&=&\Delta'_E(a)=\Delta'_A(a)+\phi'(a)+\psi'(a)+P'(a),
\end{eqnarray*}
and
\begin{eqnarray*}
&&(\varphi\otimes \varphi)\Delta_E(a)\\
&=&(\varphi\otimes \varphi)\left(\Delta_A(a)+\phi(a)+\psi(a)+P(a)\right)\\
&=&\Delta_A(a)+{r}(a\lmoi)\ot a\lmo+{s}(a\lmoi)\ot a\lmo+a\lmo\ot {r}(a\mi) +a\lmo\ot {s}(a\mi)\\
&&+r(a\ppi)\ot r(a\pii)+r(a\ppi)\ot s(a\pii)+s(a\ppi)\ot r(a\pii)+s(a\ppi)\ot s(a\pii).
\end{eqnarray*}
Thus we obtain that $\Delta'_E\varphi(a)=(\varphi\otimes \varphi)\Delta_E(a)$  if and only if the conditions \eqref{comorph11}, \eqref{comorph121}, \eqref{comorph122} and \eqref{comorph13} hold.
Then we consider that $\Delta'_E\varphi(x)=(\varphi\otimes \varphi)\Delta_E(x)$ for all $x\in V$.
\begin{eqnarray*}
\Delta'_E\varphi(x)&=&\Delta'_E({r}(x),{s}(x))=\Delta'_E({r}(x))+\Delta'_E({s}(x))\\
&=&\Delta'_A({r}(x))+\Delta'_V({s}(x))+{\rho}'({s}(x))+{\gamma}'({s}(x))),
\end{eqnarray*}
and
\begin{eqnarray*}
&&(\varphi\otimes \varphi)\Delta_E(x)\\
&=&(\varphi\otimes \varphi)(\Delta_V(x)+{\rho}(x)+{\gamma}(x))\\
&=&(\varphi\otimes \varphi)(x\li\ot x\lii+x\boi\ot x\boo+x\boo\ot x\bi)\\
&=&r(x\li)\ot r(x\lii)+r(x\li)\ot s(x\lii)+s(x\li)\ot r(x\lii)+s(x\li)\ot s(x\lii)\\
&&+x\boi\ot r(x\boo)+x\boi\ot s(x\boo)+r(x\boo)\ot x\bi+s(x\boo)\ot x\bi.
\end{eqnarray*}
Thus we obtain that $\Delta'_E\varphi(x)=(\varphi\otimes \varphi)\Delta_E(x)$ if and only if the conditions  \eqref{comorph21},  \eqref{comorph221},  \eqref{comorph222} and \eqref{comorph23}  hold. By definition, we obtain that $\varphi=\varphi_{r,s}$ is an isomorphism if and only if $s: V\rightarrow V$ is a linear isomorphism.
\end{proof}

The second case is $\phi=0$ and  $\psi=0$, we obtain  the following type (c2) unified product for  coalgebras.
\begin{lemma}\label{cor02}
Let $({A},\Delta_A)$ be a  BiHom-coalgebra and $V$ a BiHom-vector space.
An  extending datum  of $({A},\Delta_A)$ by $V$ of type (c2)  is  $\Omega^{(2)}({A},V)=(\rho, \gamma, {Q}, \Delta_V)$ with  linear maps
\begin{eqnarray*}
&&\rho: V  \to A\otimes V,\quad  \gamma: V \to V \otimes A,\quad \Delta_{V}: V \to V\otimes V,\quad Q: V \to A\otimes A.
\end{eqnarray*}
 Denote by $A^{}\# {}^{Q} V$ the BiHom-vector space $E={A}\oplus V$ with the comultiplication
$\Delta_E: E\rightarrow E\otimes E$ given by
\begin{eqnarray*}
\Delta_{E}(a,x)&=&\Delta_{A}(a)+(\Delta_{V}+\rho+\gamma+Q)(x),
\end{eqnarray*}
that is
$$\Delta_{E}(a)= a\li \ot a\lii,\quad \Delta_{E}(x)= x\li \ot x\lii+ x\boi \ot x\boo+x\boo \ot x\bi+x\qi\ot x\qii.$$
Then $A^{}\# {}^{Q} V$  is a  BiHom-coalgebra with the comultiplication given above if and only if the following compatibility conditions hold:
\begin{enumerate}
\item[(D1)] $\alaa(x_{[-1]}) \otimes \Delta_{V}\left(x_{[0]}\right)=\rho\left(x_{1}\right) \otimes \bevv(x_{2})$,
\item[(D2)] $\Delta_{V}(x\boo)\ot \beaa(x\bi)=\alvv(x_1)\ot \gamma(x_2)$,
\item[(D3)] $\alvv(x_{1}) \otimes \rho\left(x_{2}\right)=\gamma\left(x_{1}\right) \otimes \bevv(x_{2})$,
\item[(D4)] $\alaa(x_{[-1]}) \otimes \rho\left(x_{[0]}\right)=\Delta_{A}\left(x_{[-1]}\right) \otimes \bevv(x_{[0]})+Q\left(x_{1}\right) \otimes \bevv(x_{2})$,
\item[(D5)] $\gamma(x\boo)\ot \beaa(x\bi)=\alvv(x\boo)\ot\Delta_A(x\bi)+\alvv(x_1)\ot Q(x_2)$,
\item[(D6)] $\alaa(x\boi)\ot\gamma(x\boo)=\rho(x\boo)\ot \beaa(x\bi)$,
\item[(D7)]  $ \alaa(x\qi)\ot \Delta_A(x\qii)+\alaa(x\boi)\ot Q(x\boo)=\Delta_A(x\qi)\ot \beaa(x\qii)+Q(x\boo)\ot \beaa(x\bi)$,
\item[(D8)] $\alvv(x_1)\ot \Delta_V(x_2)=\Delta_V(x\li)\ot \bevv( x\lii)$.
\end{enumerate}
In this case $(V,\Delta_V)$ is a BiHom-coalgebra.
\end{lemma}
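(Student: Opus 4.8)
The plan is to verify directly that the proposed comultiplication $\Delta_E$ is coassociative, organizing the computation by the decomposition of $E^{\ot 3}$ into its homogeneous pieces. Writing $E=A\oplus V$, the space $E\ot E\ot E$ is the direct sum of eight subspaces, one for each way of selecting the $A$-summand or the $V$-summand in each of the three tensor slots. Since the identity $(\Delta_E\ot\id)\Delta_E=(\id\ot\Delta_E)\Delta_E$ is linear in its argument, it is enough to check it on $a\in A$ and on $x\in V$ separately.

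For $a\in A$ there is nothing to do and no condition arises: by construction $\Delta_E(a)=\Delta_A(a)\in A\ot A$, and since $\Delta_E$ restricts to $\Delta_A$ on $A$, both iterated coproducts of $a$ lie in $A\ot A\ot A$ and coincide because $(A,\Delta_A)$ is a coalgebra.

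The case $x\in V$ carries the content. Here $\Delta_E(x)=\Delta_V(x)+\rho(x)+\gamma(x)+Q(x)$, with the four summands lying in $V\ot V$, $A\ot V$, $V\ot A$, $A\ot A$ respectively. I would expand $(\Delta_E\ot\id)\Delta_E(x)$ and $(\id\ot\Delta_E)\Delta_E(x)$ in full, using on each tensor factor that $\Delta_E$ acts as $\Delta_A$ on $A$ (landing in $A\ot A$) and as the above four-term sum on $V$; this is exactly the specialization at $\phi=\psi=P=0$ of the computation already carried out in the proof of Lemma~\ref{lem2}. Projecting the resulting equality onto the eight homogeneous summands of $E^{\ot 3}$ then reproduces precisely the list (D1)--(D8): the $V\ot V\ot V$ component is coassociativity of $\Delta_V$, i.e. (D8) (note that $V$ is not assumed coassociative a priori, so this is genuinely an output condition); the components $A\ot V\ot V$, $V\ot A\ot V$, $V\ot V\ot A$ yield (D1), (D3), (D2); the components $A\ot A\ot V$, $A\ot V\ot A$, $V\ot A\ot A$ yield (D4), (D6), (D5); and the $A\ot A\ot A$ component yields (D7). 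Since these eight projections are jointly injective, the two iterated coproducts agree on $V$ if and only if (D1)--(D8) all hold; together with the case $a\in A$ this shows that $\Delta_E$ is coassociative if and only if (D1)--(D8) hold, which is the assertion that $A^{}\# {}^{Q} V$ is a coalgebra precisely under these conditions.

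The only genuine difficulty is the bookkeeping: keeping track of which of the outer summands $\Delta_V,\rho,\gamma,Q$ feeds each of the four inner ones, and correctly collecting the (at most two) terms that land in each homogeneous piece -- most delicately the $A\ot A\ot A$ piece, which gathers $\Delta_A(x\qi)\ot x\qii$ and $Q(x\boo)\ot x\bi$ on one side against $x\qi\ot\Delta_A(x\qii)$ and $x\boi\ot Q(x\boo)$ on the other. Once the pieces have been separated each of (D1)--(D8) is read off by inspection, so no further calculation remains.
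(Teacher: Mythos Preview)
Your proposal is correct and matches the paper's (implicit) approach: the paper does not spell out a proof for this lemma, but it is a straightforward specialization of the explicit computation in Lemma~\ref{lem2} at $\phi=\psi=P=0$, exactly as you indicate. Your component-by-component bookkeeping is accurate, including the identification of which homogeneous piece of $E^{\ot 3}$ produces each of (D1)--(D8).
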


Denote the set of all  coalgebraic extending datum of ${A}$ by $V$ of type (c2) by $\mathcal{C}^{(2)}({A},V)$.

Similar to the BiHom-algebra case,  one  show that any BiHom-coalgebra structure on $E$ containing ${A}$ as a  subcoalgebra is isomorphic to such a unified coproduct.
\begin{lemma}\label{lem:33-4}
Let $({A},\Delta_A)$ be a  BiHom-coalgebra and $E$ a BiHom-vector space containing ${A}$ as a subspace. Suppose that there is a  BiHom-coalgebra structure $(E,\Delta_E)$ on $E$ such that  $({A},\Delta_A)$ is a subcoalgebra of $E$. Then there exists a  BiHom-coalgebraic extending system $\Omega^{(2)}({A}, V)$ of $({A},\Delta_A)$ by $V$ such that $(E,\Delta_E)\cong A^{}\# {}^{Q} V$.
\end{lemma}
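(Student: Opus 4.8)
The plan is to imitate the proof of Lemma~\ref{lem:33-3}, simplified by the hypothesis that $A$ is now a \emph{subcoalgebra} of $E$ rather than merely the target of a coalgebra projection; this is the ``type (c2)'' analogue of that result.

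First I would choose a linear complement $V$ of $A$ in $E$, so that $E=A\oplus V$ as vector spaces, and let $p\colon E\to A$ and $\pi\colon E\to V$ be the associated projections. Since $\Delta_E(a)\in A\otimes A$ for all $a\in A$, we have $\Delta_E|_A=\Delta_A$, and consequently the three ``mixed'' maps $(\pi\otimes p)\Delta_E|_A$, $(p\otimes\pi)\Delta_E|_A$, $(\pi\otimes\pi)\Delta_E|_A$ all vanish; this is exactly what places us in type (c2) (i.e.\ $\phi=\psi=P=0$). For $x\in V$ I define
$$\rho(x):=(p\otimes\pi)\Delta_E(x),\quad \gamma(x):=(\pi\otimes p)\Delta_E(x),\quad Q(x):=(p\otimes p)\Delta_E(x),\quad \Delta_V(x):=(\pi\otimes\pi)\Delta_E(x),$$
so that, under the identification $E\otimes E=A\otimes A\ \oplus\ A\otimes V\ \oplus\ V\otimes A\ \oplus\ V\otimes V$, one has $\Delta_E(x)=Q(x)+\rho(x)+\gamma(x)+\Delta_V(x)$. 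This yields the candidate extending datum $\Omega^{(2)}(A,V)=(\rho,\gamma,Q,\Delta_V)$.

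Next I would verify that this datum satisfies (D1)--(D8). For $a\in A$ the coassociativity of $\Delta_E$ reduces to that of $\Delta_A$ and gives nothing new; for $x\in V$ I would apply the eight projections onto $A^{\otimes 3}$, $A^{\otimes 2}\otimes V$, $A\otimes V\otimes A$, $A\otimes V^{\otimes 2}$, $V\otimes A^{\otimes 2}$, $V\otimes A\otimes V$, $V^{\otimes 2}\otimes A$, $V^{\otimes 3}$ to the identity $(\Delta_E\otimes\id)\Delta_E(x)=(\id\otimes\Delta_E)\Delta_E(x)$. Expanding $\Delta_E$ on each side in terms of $\Delta_A,\rho,\gamma,Q,\Delta_V$ and reading off the homogeneous component in each of these eight subspaces produces precisely (D1), (D2), \dots, (D8) in turn. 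This is the only computational content of the argument, and it is strictly parallel to the expansions already carried out in Lemma~\ref{lem1} and Lemma~\ref{lem2}; I would present it in that style, recording one or two representative components explicitly and noting that the remaining ones are analogous.

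Finally I would check that $\varphi\colon A^{}\# {}^{Q} V\to E$, $\varphi(a,x):=a+x$, is an isomorphism of coalgebras. It is a linear bijection with inverse $e\mapsto(p(e),\pi(e))$ and restricts to the identity on $A$, and it intertwines the comultiplications because on $A$ both sides equal $\Delta_A(a)$, while on $V$ the image $(\varphi\otimes\varphi)\Delta_{A^{}\# {}^{Q} V}(x)$ reassembles the four blocks $Q(x)+\rho(x)+\gamma(x)+\Delta_V(x)=\Delta_E(x)$ by construction. Hence $(E,\Delta_E)\cong A^{}\# {}^{Q} V$, as claimed. The main (and only modest) obstacle is the bookkeeping in the middle step; conceptually nothing goes beyond Lemma~\ref{lem:33-3}.
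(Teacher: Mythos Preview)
Your proposal is correct and follows essentially the same approach as the paper: define the datum $(\rho,\gamma,Q,\Delta_V)$ via the four projections of $\Delta_E|_V$ and check that $\varphi(a,x)=a+x$ is a coalgebra isomorphism. If anything, your write-up is more careful than the paper's, since you explicitly explain why the subcoalgebra hypothesis forces $\phi=\psi=P=0$ (placing us in type~(c2)) and indicate how (D1)--(D8) are extracted from coassociativity, whereas the paper simply records the four defining formulas and asserts the isomorphism.
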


\begin{proof}
Let $p: E\to {A}$ and $\pi: E\to V$ be the projection map and $V=ker({p})$.
Then the extending datum of $({A},\Delta_A)$ by $V$ is defined as follows:
\begin{eqnarray*}
&&{\rho}: V\rightarrow A\ot V,~~~~{\phi}(x)=(p\otimes {\pi})\Delta_E(x),\\
&&{\gamma}: V\rightarrow V\ot {A},~~~~{\phi}(x)=(\pi\otimes {p})\Delta_E(x),\\
&&\Delta_V: V\rightarrow V\otimes V,~~~~\Delta_V(x)=(\pi\otimes \pi)\Delta_E(x),\\
&&Q: V\rightarrow {A}\otimes {A},~~~~Q(x)=({p}\otimes {p})\Delta_E(x).
\end{eqnarray*}
One check that  $\varphi: A^{}\# {}^{Q} V\to E$ given by $\varphi(a,x)=a+x$ for all $a\in A, x\in V$ is a  coalgebra isomorphism.
\end{proof}

\begin{lemma}\label{lem-c2}
Let $\Omega^{(2)}({A}, V)=(\rho, \gamma, {Q}, \Delta_V)$ and ${\Omega'^{(2)}}({A}, V)=(\rho', \gamma', {Q'}, \Delta'_V)$ be two  coalgebraic extending datums of $({A},\Delta_A)$ by $V$. Then there exists a bijection between the set of   coalgebra homomorphisms $\varphi: A \# {}^{Q} V\rightarrow A \# {}^{Q'} V$ whose restriction on ${A}$ is the identity map and the set of pairs $(r,s)$, where $r:V\rightarrow {A}$ and $s:V\rightarrow V$ are two linear maps satisfying
\begin{eqnarray}
\label{comorph1}&&{\rho}'({s}(x))=r(x\li)\ot s(x\lii)+x\boi\ot s(x\boo),\\
\label{comorph2}&&{\gamma}'({s}(x))=s(x\li)\ot r(x\lii)+s(x\boo)\ot x\bi,\\
\label{comorph3}&&\Delta_V'({s}(x))=({s}\otimes {s})\Delta_V(x)\\
\label{comorph4}&&\Delta'_A({r}(x))+{Q'}({s}(x))=r(x\li)\ot r(x\lii)+x\boi\ot r(x\boo)+r(x\boo)\ot x\bi+{Q}(x).
\end{eqnarray}
Under the above bijection the  coalgebra homomorphism $\varphi=\varphi_{r,s}: A^{ }\# {}^{Q} V\rightarrow A^{ }\# {}^{Q'} V$ to $(r,s)$ is given by $\varphi(a,x)=(a+r(x),s(x))$ for all $a\in {A}$ and $x\in V$. Moreover, $\varphi=\varphi_{r,s}$ is an isomorphism if and only if $s: V\rightarrow V$ is a linear isomorphism.
\end{lemma}

\begin{proof} The proof is similar as the proof of Lemma \ref{lem-c1}.
Let $\varphi: A^{ }\# {}^{Q} V\rightarrow A^{}\# {}^{Q'} V$  be a  coalgebra homomorphism  whose restriction on ${A}$ is the identity map.
First we it easy to see that  $\Delta'_E\varphi(a)=(\varphi\otimes \varphi)\Delta_E(a)$ for all $a\in {A}$.
Then we consider that $\Delta'_E\varphi(x)=(\varphi\otimes \varphi)\Delta_E(x)$ for all $x\in V$.
\begin{eqnarray*}
\Delta'_E\varphi(x)&=&\Delta'_E({r}(x),{s}(x))=\Delta'_E({r}(x))+\Delta'_E({s}(x))\\
&=&\Delta'_A({r}(x))+\Delta'_V({s}(x))+{\rho}'({s}(x))+{\gamma}'({s}(x))+{Q}'({s}(x)),
\end{eqnarray*}
and
\begin{eqnarray*}
&&(\varphi\otimes \varphi)\Delta_E(x)\\
&=&(\varphi\otimes \varphi)(\Delta_V(x)+{\rho}(x)+{\gamma}(x)+{Q}(x))\\
&=&(\varphi\otimes \varphi)(x\li\ot x\lii+x\boi\ot x\boo+x\boo\ot x\bi+{Q}(x))\\
&=&r(x\li)\ot r(x\lii)+r(x\li)\ot s(x\lii)+s(x\li)\ot r(x\lii)+s(x\li)\ot s(x\lii)\\
&&+x\boi\ot r(x\boo)+x\boi\ot s(x\boo)+r(x\boo)\ot x\bi+s(x\boo)\ot x\bi+{Q}(x).
\end{eqnarray*}
Thus we obtain that $\Delta'_E\varphi(x)=(\varphi\otimes \varphi)\Delta_E(x)$ if and only if the conditions \eqref{comorph1}, \eqref{comorph2},  \eqref{comorph3} and \eqref{comorph4} hold. By definition, we obtain that $\varphi=\varphi_{r,s}$ is an isomorphism if and only if $s: V\rightarrow V$ is a linear isomorphism.
\end{proof}

Let $({A},\Delta_A)$ be a  BiHom-coalgebra and $V$ a BiHom-vector space. Two  coalgebraic extending systems $\Omega^{(i)}({A}, V)$ and ${\Omega'^{(i)}}({A}, V)$  are called equivalent if $\varphi_{r,s}$ is an isomorphism.  We denote it by $\Omega^{(i)}({A}, V)\equiv{\Omega'^{(i)}}({A}, V)$.
From the above Lemmas \ref{lem:33-3}, \ref{lem-c1}, \ref{lem:33-4} and \ref{lem-c2}, we obtain the following Theorem \ref{thm3-2}.
\begin{theorem}\label{thm3-2}
Let $({A},\Delta_A)$ be a  BiHom-coalgebra, $E$ a BiHom-vector space containing ${A}$ as a subspace and
$V$ be a ${A}$-complement in $E$. Denote $\mathcal{HC}(V,{A}):=\mathcal{C}^{(1)}({A},V)\sqcup\mathcal{C}^{(2)}({A},V) /\equiv$. Then the map
\begin{eqnarray*}
&&\Psi: \mathcal{HC}_{{A}}^2(V,{A})\rightarrow CExtd(E,{A}),\\
&&\overline{\Omega^{(1)}({A},V)}\mapsto A^{P}\# {}^{} V,
 \quad \overline{\Omega^{(2)}({A},V)}\mapsto A^{}\# {}^{Q} V
\end{eqnarray*}
is bijective, where $\overline{\Omega^{(i)}({A},V)}$ is the equivalence class of $\Omega^{(i)}({A}, V)$ under $\equiv$.
\end{theorem}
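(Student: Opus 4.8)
The plan is to obtain the bijection $\Psi$ by gluing together the four preparatory lemmas on type (c1) and type (c2) unified coproducts, exactly as Theorem \ref{thm3-1} is obtained from the corresponding lemmas in the algebra case. Fix once and for all a linear retraction $p\colon E\to A$ with $p|_{A}=\id_{A}$ and put $V=\ker p$, so that $E=A\oplus V$ and $V$ is the chosen complement of $A$ in $E$. By the definition of an extending system of a coalgebra through $V$, any coalgebra structure $\Delta_{E}$ on $E$ representing a class of $CExtd(E,A)$ satisfies one of two conditions: either the canonical projection $p\colon E\to A$ is a coalgebra homomorphism, which is the type (c1) situation, or the canonical injection $i\colon A\to E$ is a coalgebra homomorphism, i.e.\ $A$ is a subcoalgebra of $E$, which is the type (c2) situation.

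To see that $\Psi$ is surjective, take a class in $CExtd(E,A)$ and a representative $(E,\Delta_{E})$. If $p$ is a coalgebra homomorphism, Lemma \ref{lem:33-3} yields a coalgebraic extending datum $\Omega^{(1)}(A,V)$ of type (c1) together with a coalgebra isomorphism $\varphi\colon A^{P}\# {}^{} V\to E$, $\varphi(a,x)=a+x$, whose restriction to $A$ is the identity; in particular $\varphi$ stabilizes $A$, so $\Psi\bigl(\overline{\Omega^{(1)}(A,V)}\bigr)$ is exactly the class of $(E,\Delta_{E})$. If instead $A$ is a subcoalgebra of $E$, Lemma \ref{lem:33-4} does the same with a datum $\Omega^{(2)}(A,V)$ of type (c2) and the coalgebra $A^{}\# {}^{Q} V$. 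Hence every class of $CExtd(E,A)$ lies in the image of $\Psi$.

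For well-definedness and injectivity of $\Psi$ one invokes Lemma \ref{lem-c1} for type (c1) and Lemma \ref{lem-c2} for type (c2). If two coalgebraic extending data of the same type are $\equiv$-equivalent, then the corresponding pair $(r,s)$ has $s$ a linear isomorphism and $\varphi_{r,s}(a,x)=(a+r(x),s(x))$ is an isomorphism of coalgebras restricting to $\id_{A}$, hence stabilizing $A$; therefore the two unified coproducts define the same class in $CExtd(E,A)$, and $\Psi$ descends to the quotient. Conversely, if $\Psi(\overline{\Omega})=\Psi(\overline{\Omega'})$ there is a coalgebra isomorphism $\varphi$ between the two unified coproducts which stabilizes $A$; stabilizing $A$ forces $\varphi(a,x)=(a+r(x),s(x))$ for some linear $r\colon V\to A$ and $s\colon V\to V$, with $s$ bijective because $\varphi$ is, and Lemma \ref{lem-c1} or \ref{lem-c2} then says that the relations defining $\equiv$ are satisfied, so $\overline{\Omega}=\overline{\Omega'}$.

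The step I expect to need the most care is the handling of the interface between the two types. A coalgebra structure on $E$ for which all the mixing maps $\phi,\psi,P$ vanish is at once of type (c1) (with $P=0$) and of type (c2) (with $Q=0$), so it is represented in both components of the disjoint union; one must make sure that such a structure does not give rise to two distinct classes of $\mathcal{HC}(V,A)$ with the same image, i.e.\ that $\equiv$ on $\mathcal{C}^{(1)}(A,V)\sqcup\mathcal{C}^{(2)}(A,V)$ is the equivalence relation generated by all $\varphi_{r,s}$-isomorphisms, these being allowed to pass between the two types when the maps being interchanged vanish. Everything else is a routine transcription to the comultiplicative setting of the Agore--Militaru argument, using the compatibility lists (C1)--(C12) and (D1)--(D8) already verified for the two unified coproducts.
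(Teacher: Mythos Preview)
Your proposal is correct and follows exactly the approach the paper intends: the paper gives no proof beyond the sentence ``From the above lemmas, we obtain the following result,'' so your explicit assembly of Lemmas \ref{lem:33-3}, \ref{lem-c1}, \ref{lem:33-4}, and \ref{lem-c2} into surjectivity, well-definedness, and injectivity is precisely what is being asserted. Your final paragraph flagging the overlap between the two types is a genuine observation about an imprecision in the paper's setup rather than a defect in your argument.
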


\subsection{Extending structures for BiHom-Frobenius algebras}
Let $(A,\cdot,\Delta_A)$ be a BiHom-Frobenius algebra. From Definition \ref{bi-cycle1} we have the following two cases.

The first case is that we assume $Q=0$ and $\ppr, \ppl$ to be trivial. Then by the above Theorem \ref{main2}, we obtain the following result.

\begin{theorem}\label{thm-41}
Let $(A,\cdot,\Delta_A)$ be a BiHom-Frobenius algebra and $V$ a BiHom-vector space.
An extending datum of ${A}$ by $V$ of type (I) is  $\Omega^{(1)}({A},V)=(\trr, \trl, \phi, \psi, P, \cdot_V, \Delta_V)$ consisting of  linear maps
\begin{eqnarray*}
\trr: V\otimes A\rightarrow V, \quad\trl: A\otimes V\rightarrow V,~~~~\theta:  A\otimes A \rightarrow {V},~~~\cdot_V:V\otimes V \rightarrow V,\\
 \phi :A \to V\otimes A, \quad{\psi}: V\to  V\otimes A,~~~~{P}: A\rightarrow {V}\otimes {V},~~~~\Delta_V: V\rightarrow V\otimes V.
\end{eqnarray*}
Then the unified product $A^{P}_{}\# {}^{}_{\theta}\, V$ with multiplication
\begin{eqnarray}
(a, x) (b, y)=(ab, \, xy+ a\trr y+x\trl b+\theta(a, b))
\end{eqnarray}
and comultiplication
\begin{eqnarray}
\Delta_E(a)=\Delta_A(a)+{\phi}(a)+{\psi}(a)+P(a),\quad \Delta_E(x)=\Delta_V(x)+{\rho}(x)+{\gamma}(x)
\end{eqnarray}
form an infinitesimal bialgebra if and only if $A_{}\# {}_{\theta} V$ form a BiHom-algebra, $A^{P}\# {}^{} \, V$ form a   coalgebra and the following conditions are satisfied:
\begin{enumerate}
\item[(E1)]  $\phi(a b)+\gamma(\theta(a, b)) =\alvv(a_{(-1)}) \otimes\left(a_{(0)} \beaa(b)\right)$\\
$=(\alaa(a) \trr b_{(-1)}) \otimes \beaa(b_{(0)})+\theta\left(\alaa(a), b_{1}\right) \otimes \beaa(b_{2})$,
\item[(E2)] $\psi(a b)+\rho(\theta(a, b))=(\alaa(a) b_{(0)}) \otimes \bevv(b_{(1)})$\\
$=\alaa(a_{(0)}) \otimes\left(a_{(1)} \trl \beaa(b)\right)+\alaa(a_{1}) \otimes \theta\left(a_{2}, \beaa(b)\right)$,
\item[(E3)] $\rho(x y)+\psi(\sigma(x, y))=\alaa(x_{[-1]}) \otimes x_{[0]} \bevv(y) +\alaa(x\qi)\otimes x\qii\trr \bevv(y)$,
\item[(E4)] $\gamma(x y)=\alvv(x)y_{[0]}\otimes \beaa(y_{[1]})$,
\item[(E5)] $\Delta_{V}(a \trr y)=\alvv (a_{(-1)} )\otimes\left(a_{(0)}\trr\bevv( y)\right)+\alvv(a\ppi) \otimes a\pii \bevv(y)\\
=\left(\alaa(a) \trr y_{1}\right) \otimes \bevv(y_{2})+\theta\left(\alaa(a), y_{[-1]}\right) \otimes \bevv(y_{[0]})$,
\item[(E6)] $\Delta_{V}(x \trl b)=\alvv(x_{1}) \otimes\left(x_{2}  \trl \beaa( b)\right)+\alvv(x_{[0]}) \otimes \theta\left(x_{[1]}, \beaa(b)\right)\\
=\left(\alvv(x)\trl b_{(0)}\right) \otimes \bevv( b_{(1)})+\alvv(x) b\ppi\otimes \bevv(b\pii)$,
\item[(E7)]$\Delta_{H}(\theta(a,b))+P(a, b)$\\
$=\alhh(a_{(-1)}) \otimes\theta(a_{(0)},\beaa(b))+a\ppi \otimes \alhh(a\pii)\trl \beaa(b)\\
=\theta(\alaa(a),b_{(0)})\otimes \behh(b_{(1)})+\alaa(a)\trr b\ppi\otimes \behh(b\pii)$,
\item[(E8)]
 $\gamma(x\trl b)=\alhh(x_{[0]}) \otimes x_{[1]} \beaa(b)\\
  =\alhh(x) b_{(-1)} \otimes \beaa(b_{(0)})+\left(\alhh(x)\trl b_{1}\right) \otimes \beaa(b_{2})$,
\item[(E9)]
$\rho(a \trr y)=\alaa( a)y_{[-1]} \otimes \behh(y_{[0]})\\
=\alaa(a_{(0)}) \otimes\left(a_{(1)}  \behh(y)\right)+\alaa(a_{1}) \otimes\left(a_{2} \trr \behh(y)\right)$,
\item[(E10)]
$\rho(x\trl b)=\alaa(x_{[-1]}) \otimes\left(x_{[0]} \trl \beaa(b)\right)$,
\item[(E11)]
$\gamma(a \trr y)=\left(\alaa(a)\trr y_{[0]}\right)\otimes \beaa(y_{[1]})$,
\item[(E12)] $\Delta_{V}(x y)=\alvv(x_{1}) \otimes x_{2} \bevv(y)+\alvv(x_{[0]}) \otimes\left(x_{[1]} \trr \bevv( y)\right)\\
=\alvv(x) y_{1} \otimes \bevv(y_{2})+\left(\alvv(x) \trl y_{[-1]}\right) \otimes \bevv(y_{[0]}).$
\end{enumerate}
Conversely, any BiHom-Frobenius algebra structure on $E$ with the canonical projection map $p: E\to A$ both a BiHom-algebra homomorphism and a BiHom-coalgebra homomorphism is of this form.
\end{theorem}
Note that in this case, $(V,\cdot,\Delta_V)$ is a  braided BiHom-Frobenius algebra. Although $(A,\cdot,\Delta_A)$ is not a  sub-bialgebra of $E=A^{P}_{}\# {}^{}_{\theta}\, V$, it is indeed a BiHom-Frobenius algebra and a subspace $E$.
Denote the set of all BiHom-Frobenius algebraic extending datum of type (I) by $\mathcal{IB}^{(1)}({A},V)$.

The second case is that we assume $P=0, \theta=0$ and $\phi, \psi$ to be trivial. Then by the above Theorem \ref{main2}, we obtain the following result.

\begin{theorem}\label{thm-42}
Let $A$ be an infinitesimal bialgebra and $V$ a BiHom-vector space.
An extending datum of ${A}$ by $V$ of type (II) is  $\Omega^{(2)}({A},V)=(\ppr, \ppl, \trr, \trl, \sigma, \rho, \gamma, Q,  \cdot_V, \Delta_V)$ consisting of  linear maps
\begin{eqnarray*}
\trl: V\otimes {A}\rightarrow {V},~~~~\trr: A\otimes {V}\rightarrow V,~~~\ppr: V\otimes A\to A,~~~~\ppl: A\otimes V\to A,\\
{\rho}: V\to  A\otimes V,~~~~{\gamma}: V\to  V\otimes A,~~~~\sigma:  V\otimes V \rightarrow {A},~~~~{Q}: V\rightarrow {A}\otimes {A},\\
\end{eqnarray*}
Then the unified product $A^{}_{\sigma}\# {}^{Q}_{}\, V$ with multiplication
\begin{eqnarray}
(a, x)(b, y)=\big(ab+x\ppr b+a\ppl y+\sigma(x, y), \, xy+x\trl b+a\trr y\big)
\end{eqnarray}
and comultiplication
\begin{eqnarray}
\Delta_E(a)=\Delta_A(a),\quad \Delta_E(x)=\Delta_V(x)+{\rho}(x)+{\gamma}(x)+Q(x)
\end{eqnarray}
form an infinitesimal bialgebra if and only if $A_{\sigma}\# {}_{} V$ forms a BiHom-algebra, $A^{}\# {}^{Q}V$ forms a   coalgebra and the following conditions are satisfied:
\begin{enumerate}
\item[(F1)] $\rho(x y)=\alaa(x_{[-1]}) \otimes x_{[0]}\bevv( y)+\alaa(x\qi)\otimes x\qii\trr \bevv(y)\\
  =\left(\alvv(x) \ppr y_{[-1]}\right) \otimes \bevv(y_{[0]})+\sigma\left(\alvv(x), y_{1}\right) \otimes \bevv(y_{2})$,
\item[(F2)] $\gamma(x y)=\alvv(x_{[0]})\otimes (x_{[1]}\ppl \bevv(y))+\alvv(x_{1}) \otimes \sigma\left(x_{2}, \bevv(y)\right)\\
   =\alvv(x)y_{[0]}\otimes \beaa(y_{[1]})+\alvv(x)\trl y\qi\otimes \beaa(y\qii)$,
\item[(F3)] $\Delta_{A}(x \ppr b)+Q(x\trl  b)=\left(\alvv(x) \ppr b_{1}\right) \otimes \beaa(b_{2})$\\
$=\alaa(x_{[-1]}) \otimes\left(x_{[0]} \ppr \beaa(b)\right)+\alaa(x\qi) \otimes x\qii \beaa(b)$,

\item[(F4)] $\Delta_{A}(a\ppl y)+Q(a\trr y)$\\
$=\alaa(a_{1}) \otimes\left(a_{2} \ppl \bevv(y)\right)+\alaa(a_{(0)}) \otimes \sigma\left(a_{(1)}, \bevv(y)\right)\\
  =\left(\alaa(a\ppl) y_{[0]}\right) \otimes \beaa(y_{[1]})+\alaa(a) y\qi \otimes \beaa(y\qii)$,

\item[(F5)] $\Delta_{V}(a \trr y)=\left(\alaa(a) \trr y_{1}\right) \otimes \bevv(y_{2})$,

\item[(F6)] $\Delta_{V}(x \trl b)=\alvv(x_{1}) \otimes\left(x_{2}  \trl \beaa(b)\right)$,

\item[(F7)]$\Delta_{A}(\sigma(x,y))+Q(x, y)$\\
$=\alhh(x_{[-1]})\otimes \sigma(x_{[0]},\behh(y))+\alaa(x\qi)\otimes x\qii\ppl \behh(y)$\\
$=\sigma(\alhh(x),y_{[0]})\otimes \beaa(y_{[-1]})+\alhh(x)\ppr y\qi\otimes \beaa(y\qii)$,

\item[(F8)]
 $\gamma(x\trl b)=\left(\alhh(x)\trl b_{1}\right) \otimes \beaa(b_{2})$\\
 $=\alhh(x_{1}) \otimes\left(x_{2} \ppr\beaa( b)\right)+\alhh(x_{[0]}) \otimes x_{[1]} \beaa(b)$,

\item[(F9)]
$\rho(a \trr y)=\alaa(a_{1}) \otimes\left(a_{2} \trr \behh(y)\right)$\\
$=\left(\alaa(a)\ppl y_{1}\right) \otimes \behh(y_{2})+\alaa(a) y_{[-1]} \otimes \behh(y_{[0]})$,

\item[(F10)]
$\rho(x\trl b)=\alaa(x_{[-1]}) \otimes\left(x_{[0]} \trl \beaa(b)\right)$,

\item[(F11)]
$\gamma(a \trr y)=\left(\alaa(a)\trr y_{[0]}\right)\otimes \beaa(y_{[1]})$,

\item[(F12)] $\Delta_{V}(x y)=\alvv(x_{1}) \otimes x_{2} \bevv(y)+\alvv(x_{[0]}) \otimes\left(x_{[1]} \trr \bevv( y)\right)\\
=\alvv(x) y_{1} \otimes \bevv(y_{2})+\left(\alvv(x) \trl y_{[-1]}\right) \otimes \bevv(y_{[0]}).$
\end{enumerate}
Conversely, any BiHom-Frobenius algebra structure on $E$ with the canonical injection map $i: A\to E$ both a BiHom-algebra homomorphism and a   coalgebra homomorphism is of this form.
\end{theorem}
Note that in this case, $(A,\cdot,\Delta_A)$ is a  sub-algebra and sub-coalgebra of $E=A^{}_{\sigma}\# {}^{Q}_{}\, V$ and $(V,\cdot,\Delta_V)$ is a  braided BiHom-Frobenius algebra.
Denote the set of all  BiHom-Frobenius algebraic extending datum of type (II) by $\mathcal{IB}^{(2)}({A},V)$.

In the above two cases, we find that  the braided BiHom-Frobenius algebra $V$ plays an important role in the extending problem of BiHom-Frobenius algebra $A$.
Note that $A^{P}_{}\# {}^{}_{\theta}\, V$ and $A^{}_{\sigma}\# {}^{Q}_{}\, V$ are all BiHom-Frobenius algebra structures on $E$.
Conversely,  any BiHom-Frobenius algebraic extending system $E$ of ${A}$  through $V$ is isomorphic to such two types.
Now from Theorem \ref{thm-41}, Theorem \ref{thm-42} we obtain the main result of in this section,
which solves the extending problem for BiHom-Frobenius algebra.

\begin{theorem}
Let $({A}, \cdot, \Delta_A)$ be a BiHom-Frobenius algebra, $E$ a BiHom-vector space containing ${A}$ as a subspace and $V$ be a complement of ${A}$ in $E$.
Denote by
$$\mathcal{HLB}(V,{A}):=\mathcal{IB}^{(1)}({A},V)\sqcup\mathcal{IB}^{(2)}({A},V)/\equiv.$$
Then the map
\begin{eqnarray*}
&&\Upsilon: \mathcal{HLB}(V,{A})\rightarrow BExtd(E,{A}),\\
&&\overline{\Omega^{(1)}({A},V)}\mapsto A^{P}_{}\# {}^{}_{\theta}\, V,\quad   \overline{\Omega^{(2)}({A},V)}\mapsto A^{}_{\sigma}\# {}^{Q}_{}\, V
\end{eqnarray*}
is bijective, where $\overline{\Omega^{(i)}({A}, V)}$ is the equivalence class of $\Omega^{(i)}({A}, V)$ under $\equiv$.
\end{theorem}

\section{Conclusions and problems}
In this paper,  we developed the theory of extending structures for BiHom-Frobenius algebras.
We find that  the concept of braided BiHom-Frobenius algebra plays a key role in the extending problem of BiHom-Frobenius algebra.
It is a natural question how to develop the theory of flag extending structures for BiHom-Frobenius algebras  as did in \cite{AM1}--\cite{AM6}.
Since this problem is more complicated than the ordinary algebra cases, the solutions to them are left to future investigations.


\vskip7pt
\footnotesize{
\noindent Tao Zhang\\
College of Mathematics and Information Science,\\
Henan Normal University, Xinxiang 453007, P. R. China;\\
 E-mail address: \texttt{{zhangtao@htu.edu.cn}}

\vskip7pt
\footnotesize{
\noindent Hui-jun Yao\\
College of Mathematics and Information Science,\\
Henan Normal University, Xinxiang 453007, PR China};\\
 E-mail address: \texttt{{yhjdyxa@126.com}}

\end{document}